\newtheorem{thm}[equation]{Theorem}
\newtheorem{cor}[equation]{Corollary}
\newtheorem{lem}[equation]{Lemma}
\newtheorem{prop}[equation]{Proposition}
\theoremstyle{definition}
\numberwithin{equation}{section}
\newtheorem{conv}[equation]{Conventions}
\definecolor{mjo}{rgb}{0,0,.9}
\newcommand{\cc}{\mathbb{C}}
\newcommand{\z}{\mathbb{Z}}
\newcommand{\F}{\mathbb{F}}
\newcommand{\B}{\mathfrak{b}}
\newcommand{\g}{\mathfrak{g}}
\newcommand{\Vir}{{\rm Vir}}
\newcommand{\Ind}{{\rm Ind} }
\definecolor{mjo}{rgb}{.4,0,.9}
\begin{document}

\title{Modules induced from polynomial subalgebras of the Virasoso algebra}

\author[Matthew Ondrus and Emilie Wiesner]{Matthew Ondrus and Emilie Wiesner}

\address{\noindent Mathematics Department,
Weber State University,
Ogden, UT  84408 USA,  \emph{E-mail address}: \tt{mattondrus@weber.edu}}

\address{\noindent Department of Mathematics, Ithaca College, Williams Hall Ithaca, NY 14850, USA, \ \  \emph{E-mail address}: \tt{ewiesner@ithaca.edu}}
\keywords{Virasoro algebra; simple modules; Laurent polynomials; tensor products }
 \subjclass[2010]{17B68 (primary); 17B10 \and 17B65 (secondary) }
 
\begin{abstract}
The Virasoro Lie algebra is a one-dimensional central extension of the Witt algebra, which can be realized as the Lie algebra of derivations on the algebra $\cc [t^{\pm}]$ of Laurent polynomials.  Using this fact, we define a natural family of subalgebras of the Virasoro algebra, which we call polynomial subalgebras.  We describe the one-dimensional modules for polynomial subalgebras, and we use this description to study the corresponding induced modules for the Virasoro algebra.  We show that these induced modules are frequently simple and generalize a family of recently discovered simple modules.   Additionally, we explore tensor products involving these induced modules.  This allows us to describe new simple modules and also recover results on recently discovered simple modules for the Virasoro algebra.


\end{abstract}

\date{}

\maketitle

\section{Introduction}
Throughout the paper, let $\z$ represent the integers, $\z_{\geq n} = \{k \in \z \mid k \geq n \}$ for any $n \in \z$, $\cc$ the complex numbers, and $\cc^{\times}= \cc \setminus \{ 0 \}$. 

The Virasoro algebra is defined as $\Vir={\rm span}_\cc \{z, e_j \mid j \in \z \}$
with Lie bracket 
\begin{align*}
[e_j, e_k] &= (k-j)e_{j+k} + \delta_{k,-j} \frac{j^3-j}{12} z; \\
[z, e_j] &= 0.
\end{align*}
The Virasoro algebra can be naturally realized as the unique central extension of the derivations on Laurent polyonomials $\cc[t^{\pm}]$, via the correspondence $e_j \leftrightarrow t^{j+1} \frac{d\ }{dt}$. It also has a natural triangular decomposition: $\Vir= \Vir^+ \oplus \left( \cc z \oplus \cc e_0 \right) \oplus \Vir^-$, where $\Vir^\pm = {\rm span} \{ e_{\pm j} \mid j \in \z_{\ge 1} \}$ and $\cc z \oplus \cc e_0$ acts diagonally on $\Vir$. 

The Virasoro algebra plays an important role in mathematical physics via vertex operator algebras (cf. \cite{LL04}). It also arises in connection with the representation theory of affine Kac-Moody Lie algebras (cf. \cite{K90}), and its representation theory presents an interesting and complex case study. 

Much of the past work on the representation theory of the Virasoro algebra has drawn on the underlying triangular decomposition structure. This includes weight modules. Kac \cite{K80} and Feigin and Fuchs \cite{FF90} described the simple highest weight modules. Mathieu \cite{M92} classified irreducible Virasoro algebra modules with finite-dimensional weight spaces, and Mazorchuk and Zhao \cite{MZ07} extended this to irreducible weight modules with at least one finite-dimensional weight space.  There also exist several families of simple weight modules with all infinite-dimensional weight spaces \cite{CGZ13, CM01, LLZ15}. 

Other work has focused on the action of $\Vir^+$, including the study of variations of Whittaker modules for the Virasoro algebra \cite{FJK12, LGZ11, OW08}. Mazorchuk and Zhao \cite{MZ14} gave a unified description of these modules, describing all simple modules such that certain subalgebras of $\Vir^+$ act locally finitely.

Recently, there has been progress in a new direction.  L\"u and Zhao \cite{LZ14} constructed modules $\Omega (\lambda, b)$ (for $\lambda, b \in \cc$) by twisting certain irreducible modules for the associative algebra $\cc[t^\pm, \frac{d}{dt}]$; they showed that these modules are irreducible if and only if $b \neq 1$. (They also showed these modules are isomorphic to the modules defined in \cite{GLZ13}).  Tan and Zhao \cite{TZ16, TZ13} later determined irreducibility conditions for tensor products of (possibly several) $\Omega(\lambda, b)$ and the modules classified by Mazorchuk and Zhao \cite{MZ14}. Moreover, they constructed isomorphisms between these tensor products and other induced modules for $\Vir$; their results recaptured some known Virasoro modules \cite{MW14} as well as producing several families of new simple modules.

In this paper, we further exploit the connection between the Virasoro algebra and $\cc[t^{\pm}]$ to define a family of ``polynomial" subalgebras of $\Vir$ and associated induced modules. We also transfer these results to modules induced from ``restricted polynomial" subalgebras, via tensor products.  For all the modules constructed, we provide irreducibility conditions, showing that these modules are simple for almost all choices of parameters.  The modules induced from polynomial subalgebras of degree one turn out to be isomorphic to $\Omega (\lambda, b)$. Thus, this paper reproduces the results of \cite{TZ16, TZ13} in a more unified way, and significantly extends them.

The paper organization is as follows: In Section \ref{sec:notationVir}, we define polynomial subalgebras and prove that all $\Vir$-subalgebras of co-dimension one are polynomial subalgebras.  In Section \ref{sec:homsAndOneDimReps}, we characterize one-dimensional modules for polynomial subalgebras, with an eye toward inducing these up to full $\Vir$-modules.  In Section \ref{sec:singleroot}, we establish a number of facts about $\Vir$-modules induced from polynomial subalgebras, where the underlying polynomial has only one distinct root; these results lay the groundwork for general irreducibility results for modules induced from polynomial subalgebras.  In Section \ref{sec:tensorproducts}, we consider tensor products of the modules analyzed in Section \ref{sec:singleroot}; we also allow for tensoring with modules $V$ that have a certain nice action of a subalgebra of $\Vir^+$.  Theorem \ref{thm:tensorsimpleV} of Section \ref{sec:tensorproducts}  gives irreducibility conditions for these tensor products.  Proposition \ref{prop:whenIsomorphic} implies that the modules induced from polynomial subalgebras are distinct from the modules $\Omega ( \lambda, b)$ of \cite{LZ14} when the corresponding polynomial is not linear.  Finally, in Section \ref{sec:tensorapplications}, we prove an isomorphism between the module tensor products of Section \ref{sec:tensorproducts} and newly constucted induced modules.

\section{Polynomial Subalgebras of $\Vir$}\label{sec:notationVir} 
In this section we construct subalgebras of $\Vir$ associated with elements of $\cc[t^{\pm}]$. This construction takes advantage of the connection between the Virasoro algebra and derivations of $\cc[t^{\pm}]$.

 Throughout the paper, we treat the space $\cc[t^{\pm}]$ as a Lie algebra with bracket defined by 
\begin{equation} \label{eqn:basicLie}
[f, g] = t (f g' - g f'),
\end{equation}
where $f, g \in \cc [t^{\pm}]$ and $f'$ and $g'$ represent the standard Laurent polynomial derivative with $t^n \mapsto n t^{n-1}$. This structure arises from identifying derivations on $\cc[t^{\pm}]$ with elements of $\cc[t^{\pm}]$ via the linear map $t^{j+1} \frac{d \ }{dt} \rightarrow t^j$; alternatively, we can view this relation as imposing a Lie algebra structure on $\cc[t^{\pm}]$ so that the linear map $\theta: \Vir \rightarrow \cc [t^{\pm}]$ given by $\theta(e_j)=t^j$ and $\theta(z) =0$ is a surjective Lie algebra homomorphism.  

We frequently make use of the associative algebra structure of $\cc[t^\pm]$. To distinguish multiplication operations in $\cc[t^\pm]$ and $U(\cc[t^\pm])$, we establish the following notational conventions. For $f, g \in \cc[t^\pm]$, we write $fg$ to denote the product in the associative algebra $\cc [t^{\pm}]$, and we write $f \cdot g$ to denote the product in the universal enveloping algebra $U( \cc [t^{\pm}])$.  Similarly, for $i \ge 0$ and $g \in\cc [t^{\pm}]$, we write $f^i$ to denote the $i$th power of $f$ in the associative algebra $\cc [t^{\pm}]$, and we write $f^{[i]}$ to denote the $i$th power of $f$ in the universal enveloping algebra $U(\cc [t^{\pm}] )$.  In situations where the multiplicative identities of $\cc[t^{\pm}]$ (as an associative algebra) and $U(\cc[t^{\pm}])$ may be confused, we represent the multiplicative identity of $\cc[t^\pm]$ by $t^0$ and of $U( \cc [t^{\pm}])$ by $1$. (When no opportunity for confusion exists, we continue to write the multiplicative identity of $\cc[t^\pm]$ as $1$ for notational ease.)

Let $\langle f \rangle={\rm span}_\cc \{ t^j f \mid j \in \z \}$, the associative algebra ideal of $\cc [t^{\pm}]$ generated by $f$.  Since $\langle f \rangle = \langle t^j f \rangle=\langle c f \rangle$ for any $j \in \z$ and $c \in \cc^\times$, we may assume that $f \in \cc[t]$; and $f$ has a lead coefficient of $1$ and nonzero constant term.
It follows from (\ref{eqn:basicLie}) that 
$[t^j f, t^k f] = (k - j) t^{j+k} f^2 \in \langle f \rangle$, and thus $\langle f \rangle$ is a Lie subalgebra of $\cc [t^{\pm}]$.  Therefore, 
$$
\Vir^{f} = \{ x \in \Vir \mid \theta(x) \in \langle f\rangle \}  = \theta^{-1} ( \langle f \rangle )
$$
is a subalgebra of $\Vir$. In particular, $\Vir^{f}$ has a basis
$$
\{ z, x_j^f \mid j \in \z\}
$$
where  $$x_j^f = a_0 e_j + a_1 e_{j+1} + \cdots + a_p e_{j+p}.$$  
We call any subalgebra of $\Vir$ of the form $\Vir^{f}$ a {\it polynomial subalgebra} of $\Vir$.


\subsection{Subalgebras of codimension 1}

Here we show that all $\Vir$ subalgebras of codimension $1$ are polynomial subalgebras $\Vir^f$ for some first degree polynomial $f$.

\begin{lem}\label{lem:codim1noMonomial}
Let $\mathfrak S \subseteq \Vir$ be a Lie subalgebra of codimension $1$.  Then $e_j \not\in \mathfrak S$ for all $j \in \z$.
\end{lem}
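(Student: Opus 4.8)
The plan is to argue by contradiction. Suppose $e_j \in \mathfrak{S}$ for some $j \in \z$, and derive enough elements of $\mathfrak{S}$ to force $\mathfrak{S} = \Vir$, contradicting that $\mathfrak{S}$ has codimension $1$. The basic engine is the bracket relation $[e_j, e_k] = (k-j)e_{j+k} + \delta_{k,-j}\tfrac{j^3-j}{12}z$, which lets us produce new $e_m$'s from old ones as long as we can avoid the degenerate coefficient $k - j = 0$.

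First I would handle the case $e_0 \in \mathfrak{S}$ separately, since $[e_0, e_k] = k e_k$ only reproduces $e_k$ (up to scalar) and so does not immediately generate anything new. Assuming instead $e_j \in \mathfrak{S}$ with $j \neq 0$: since $\mathfrak{S}$ has codimension $1$, for any index $m$ the space $\mathfrak{S}$ contains $e_m + (\text{something in a fixed complement})$, so in particular $\mathfrak{S}$ cannot miss too many basis directions. More concretely, I would pick a second element $x = \sum_k c_k e_k + c_z z \in \mathfrak{S}$ that is not a scalar multiple of $e_j$ (possible since $\dim \mathfrak{S} = \infty$), and bracket it repeatedly with $e_j$: $[e_j, x] = \sum_k (k-j)c_k e_{k+j} + (\text{central term})$, which shifts indices by $j$ and kills the $e_j$-coefficient while rescaling the others. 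Iterating $\ad(e_j)$ and taking linear combinations, one extracts individual $e_m$ for infinitely many $m$ in an arithmetic progression, and then brackets among those (now with varying gaps) produce all $e_m$ and eventually $z$ as well, so $\mathfrak{S} = \Vir$.

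The step I expect to be the main obstacle is ruling out the "thin" possibilities: that $\mathfrak{S}$ might contain $e_j$ together with only elements supported in a single residue class mod $j$, or that one keeps hitting the vanishing coefficient $k - j = 0$ at exactly the wrong moment. Handling $e_0 \in \mathfrak{S}$ is the most delicate sub-case — there one must use a genuinely different second element and check that the arithmetic never collapses. I would organize the argument so that once two elements $e_a, e_b \in \mathfrak{S}$ with $a \neq b$ and $a+b \neq 0$ are found, the relation $[e_a,e_b] = (b-a)e_{a+b}$ bootstraps to everything: from $e_a, e_b$ one gets $e_{a+b}$, then $e_{2a+b}, e_{a+2b}, \dots$, filling out $\Vir^+$ or $\Vir^-$ and then, via a bracket of the form $[e_n, e_{-n}]$, the central element $z$; a short separate check covers $a + b = 0$ and the case where only one $e_j$ is available a priori. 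The codimension-$1$ hypothesis is used exactly to guarantee the existence of a suitable second element outside $\cc e_j$ and to conclude a contradiction once we show the generated subalgebra is all of $\Vir$.
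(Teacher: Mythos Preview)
Your overall strategy (contradiction, produce enough $e_m$'s to generate $\Vir$) is the same as the paper's, but two steps in your plan are genuine gaps rather than missing details.

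For $j\neq 0$: you take a \emph{single} auxiliary element $x=\sum_k c_k e_k + c_z z\in\mathfrak S$ and iterate $\ad(e_j)$, claiming this lets you ``extract individual $e_m$ for infinitely many $m$.'' It does not. Each application of $\ad(e_j)$ shifts every index in the support of $x$ by $j$ simultaneously and rescales the coefficients, so $\ad(e_j)^n x$ is still supported on $\{k_i+nj\}$; linear combinations over $n$ never isolate a single monomial when the $k_i$ lie in distinct residue classes mod $j$. The fix is to use the codimension-$1$ hypothesis far more strongly than ``there exists a second element'': it gives you an element of $\mathfrak S$ in \emph{every} $2$-dimensional subspace of $\Vir$. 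The paper first proves $e_0\notin\mathfrak S$, so $\cc e_0$ is a complement; then for each $k\notin\{0,j,-j\}$ one has $c e_0 + d e_k\in\mathfrak S$ with $d\neq 0$, and a single bracket $[c e_0 + d e_k, e_j]=jc\,e_j+(j-k)d\,e_{j+k}$ already yields $e_{j+k}\in\mathfrak S$. No iteration or Vandermonde is needed.

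For $j=0$: you flag it as ``the most delicate sub-case'' but give no argument. The paper's observation is that if $e_0\in\mathfrak S$ then $\ad(e_0)$ acts semisimply on $\mathfrak S$, forcing $\mathfrak S=\bigoplus_k \mathfrak S_k$ with $\mathfrak S_k\subseteq\cc e_k$ (and $\mathfrak S_0\subseteq\cc e_0+\cc z$). Codimension $1$ then means at most finitely many $\mathfrak S_k$ vanish, so $e_k\in\mathfrak S$ for all but finitely many $k$, and these generate $\Vir$. This grading argument is the missing idea; your $\ad(e_j)$-iteration scheme does not produce it, since $[e_0,e_k]=ke_k$ only rescales.
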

\begin{proof}
We first show that $e_0 \not\in \mathfrak S$.  If $e_0 \in \mathfrak S$, this implies that 
$$\mathfrak S = \bigoplus_{k \in \z} \mathfrak S_k,$$
where $\mathfrak S_k = \mathfrak S \cap \cc e_k$ for $k \neq 0$ and $\mathfrak S_0 = \mathfrak S \cap ( \cc e_0 + \cc z)$.  Since $\mathfrak S$ has finite codimension, there can only be finitely many $k \in \z$ such that $\mathfrak S_k = 0$.  As $\mathfrak S_k = \cc e_k$ whenever $\mathfrak S_k \neq 0$ (and $k \neq 0$), it follows that the nonzero $\mathfrak S_k$ generate all of $\Vir$.  Thus $\mathfrak S=\Vir$, a contradiction. Therefore, $e_0 \not\in \mathfrak S$. 

Now suppose that $e_j \in \mathfrak S$ for some $j \neq 0$.  Let $k \in \z$ with $k \not\in \{ 0, j, -j \}$.  Since $\mathfrak S$ has codimension $1$ and $e_0 \not\in \mathfrak S$, there exist $c, d \in \cc$, $d \neq 0$, such that $c e_0 + d e_k \in \mathfrak S$.  Then $\mathfrak S$ contains $[ce_0 + d e_k, e_j] = j c e_j + (j-k)d e_{j+k}$.  As $e_j \in \mathfrak S$ and $(k-j)d \neq 0$, this implies $e_{j+k} \in \mathfrak S$.  This holds for all $k \not\in \{ 0, j, -j \}$. Since $\{ e_{j+k} \mid k \not\in \{ 0, j, -j \} \}$, generates $\Vir$, this forces $\mathfrak S = \Vir$, a contradiction.  Thus $e_j \not\in \mathfrak S$ for all $m \in \z$. 
\end{proof}

The following result shows that any subalgebra $\mathfrak S$ of $\Vir$ of codimension one has the form $\mathfrak S = \Vir^f$ for some $f=t+c$, $c \in \cc^\times$.
\begin{prop} \label{prop:codim1}
Let $\mathfrak S \subseteq \Vir$ be a subspace of codimension $1$.  Then $\mathfrak S$ is a subalgebra if and only if there exists $c \in \cc^\times$ such that $\mathfrak S = {\rm span}_\cc \{ z, e_j + c e_{j + 1} \mid j \in \z \}$. 
\end{prop}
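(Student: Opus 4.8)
The backward direction is routine: given $c \in \cc^\times$ and $f = t + c$, one checks directly from \eqref{eqn:basicLie} that $\langle f \rangle = \operatorname{span}_\cc\{t^j(t+c) \mid j \in \z\}$, so that $\theta^{-1}(\langle f\rangle) = \operatorname{span}_\cc\{z, e_j + c\,e_{j+1} \mid j \in \z\}$, and this is a subalgebra because it equals $\Vir^f$. So the content is the forward direction: assume $\mathfrak S$ has codimension $1$ and is a subalgebra, and produce the constant $c$.

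The plan is to use Lemma \ref{lem:codim1noMonomial} to control the "shape" of elements of $\mathfrak S$. Since no $e_j$ lies in $\mathfrak S$ and $\mathfrak S$ has codimension $1$, for each $j \in \z$ there is a \emph{unique} (up to scalar) nonzero element of $\mathfrak S \cap (\cc e_j + \cc e_{j+1} + \cdots)$ with lowest-degree term $e_j$ — more precisely, I would first argue that for each $j$ there is some $y_j \in \mathfrak S$ of the form $y_j = e_j + (\text{higher } e\text{-terms}) + (\text{possibly a } z\text{-term})$. Indeed, $\mathfrak S + \cc e_j = \Vir$ for every $j$ (codimension $1$ plus $e_j \notin \mathfrak S$), so $e_j$ is congruent mod $\mathfrak S$ to something, and by downward induction on the leading index one extracts such a $y_j$; the $z$-term can be absorbed since $z$ itself: either $z \in \mathfrak S$ or $z \notin \mathfrak S$, and I would handle the $z$-bookkeeping by noting $[\,\cdot,\cdot\,]$ kills $z$, so brackets of the $y_j$ never involve $z$. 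Next I would bracket two such elements: compute $[y_0, y_j]$ and use the uniqueness of the normalized generator in each "degree" to force a recursion on the coefficients. Concretely, writing $y_j = e_j + c_j e_{j+1} + \cdots$, the bracket $[y_0, y_1] = (1-0)e_1 + \cdots$ must be a scalar multiple of $y_1$, and comparing successive coefficients pins down all $c_j$ in terms of $c := c_0$ and shows the tail beyond $e_{j+1}$ vanishes, i.e. $y_j = e_j + c\, e_{j+1}$ with the \emph{same} $c$ for all $j$; finally $c \neq 0$ because otherwise $e_j \in \mathfrak S$.

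The main obstacle is the middle step: showing that the elements $y_j$ really do terminate after the $e_{j+1}$ term and that the coefficient $c$ is forced to be the same across all $j$. This requires a careful comparison of coefficients in several brackets $[y_j, y_k]$ and an induction that rules out longer tails — the codimension-$1$ hypothesis enters crucially here, since it is what guarantees that each graded piece $\mathfrak S \cap (\cc e_j \oplus \cc e_{j+1} \oplus \cdots)/(\cdots)$ is one-dimensional and hence that brackets are forced to be proportional to the unique normalized generator. An alternative, possibly cleaner route: show directly that $\langle f \rangle := \theta(\mathfrak S)$ is a codimension-$1$ associative-algebra ideal of $\cc[t^\pm]$ (it is at least a Lie ideal a priori only of $\mathfrak S$, so one must check it is closed under multiplication by $t^{\pm 1}$ using brackets with $e_{\pm 1}$-type elements), and then invoke that a codimension-$1$ ideal of $\cc[t^\pm]$ is $\langle t + c\rangle$ for some $c$; this reduces the whole problem to a statement about $\cc[t^\pm]$ and its ideals, which may be the shortest path.
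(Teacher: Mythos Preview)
Your plan is on the right track, but you are working harder than necessary on the forward direction. The ``main obstacle'' you flag --- showing that each $y_j$ terminates after the $e_{j+1}$ term --- dissolves immediately once you use the codimension hypothesis properly. Since $\dim(\Vir/\mathfrak S) = 1$ and, by Lemma~\ref{lem:codim1noMonomial}, no $e_j$ lies in $\mathfrak S$, the images of $e_j$ and $e_{j+1}$ in the one-dimensional quotient are both nonzero, hence proportional; this gives directly a unique $c_j \in \cc^\times$ with $e_j + c_j e_{j+1} \in \mathfrak S$. No tail, no $z$-component, no extraction by downward induction. This is exactly where the paper's proof starts: set $y_j = e_j + c_j e_{j+1}$, expand $[y_0, y_j]$ explicitly as
\[
[y_0,y_j] = (j-1)(e_j + c_0 e_{j+1}) + y_j + j c_j (e_{j+1} + c_0 e_{j+2}),
\]
and run a short induction (separately for $j \ge 2$ and for $j \le -1$, with $j=1$ handled via $[y_{-1}, y_2]$) to force $e_j + c_0 e_{j+1} \in \mathfrak S$, hence $c_j = c_0$, for all $j$; finally $z \in \mathfrak S$ drops out of $[y_{-2}, y_2]$. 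So your bracket-and-compare strategy is the right one, but with two-term $y_j$'s the comparison is a one-line recursion rather than an open-ended tail-killing argument.

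Your alternative route through $\theta(\mathfrak S)$ is less clean than it looks. You need $z \in \mathfrak S$ before you even know $\theta(\mathfrak S)$ has codimension one in $\cc[t^\pm]$ (if $z \notin \mathfrak S$ then $\Vir = \mathfrak S \oplus \cc z$ and $\theta(\mathfrak S) = \cc[t^\pm]$), and proving $z \in \mathfrak S$ already requires a bracket computation in $\Vir$. Moreover, upgrading $\theta(\mathfrak S)$ from a Lie subalgebra to an associative ideal by ``bracketing with $e_{\pm 1}$-type elements'' runs into the same issue: the elements actually available in $\mathfrak S$ are the $y_{\pm 1}$, not $e_{\pm 1}$, and untangling what they do amounts to the same coefficient chase as the direct approach.
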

\begin{proof}
Note that  $\mathfrak S={\rm span}_\cc \{ z, e_j + ce_{j+ 1} \mid j \in \z \}$ is a subalgebra of $\Vir$ of codimension $1$ for any $c \in \cc^\times$.  

Now let $\mathfrak S$ be a subalgebra of codimension $1$. Then Lemma \ref{lem:codim1noMonomial} implies that for each $j \in \z$ there exists a unique $c_j \in \cc^\times$ with $e_j + c_j e_{j+1} \in \mathfrak S$.  Therefore, to prove the result, it's enough to show that $e_j + c_0 e_{j+1} \in \mathfrak S$ (forcing $c_j=c_0$) for each $j \in \z$; and $z \in \mathfrak S$.

For each $j \in \z$, write $y_j= e_j + c_j e_{j+1}$.  Throughout the proof, we use the following identity: for any $0 \neq j \in \z$, 
\begin{equation}\label{eqn:abcdLinBrack2}
[y_0, y_j] = (j-1) \left(  e_{j} + c_0 e_{j+1} \right) +  y_j  +  j  c_j  \left(  e_{j+1} + c_0 e_{j+2} \right).  
\end{equation}

We first show $e_j+ c_0 e_{j+1} \in \mathfrak S$ for $j \ge 2$ by induction.  To see that $e_2 + c_0 e_3 \in \mathfrak S$, notice that $[y_0, y_1] = y_1 + c_1 (e_2 + c_0 e_3)$.  Since $y_1, [y_0, y_1] \in \mathfrak S$, this implies that $e_2 + c_0e_3 \in \mathfrak S$.   
Now assume the result holds for some $j \geq 2$. Then $y_j=e_j+c_0 e_{j+1} \in \mathfrak S$, and (\ref{eqn:abcdLinBrack2}) becomes 
$[y_0 , y_j ] = jy_j + jc_j (e_{j+1} + c_0 e_{j+2}).$
This forces $e_{j+1} + c_0 e_{j+2}\in \mathfrak S$. By induction, $e_j + c_0 e_{j+1}\in \mathfrak S$ for all $j \in \z_{\ge 2}$.


\medskip

We also use induction to argue $e_{-j} +c_0 e_{-j+1} \in \mathfrak S$ for $j >0$.   In the base case $j=1$, (\ref{eqn:abcdLinBrack2}) becomes
$[y_0, y_{-1} ] = -2(e_{-1}+c_0e_0)+y_{-1}- c_{-1}  y_0.$
It follows that $e_{-1}+c_0e_0 \in \mathfrak S$, so that $y_{-1} = e_{-1}+c_0e_0$.  Now assume $j >1$ and suppose $e_{-k}+c_0 e_{-k+1} \in \mathfrak S$ for $k < j$. This forces $y_{-k} = e_{-k} + c_0 e_{-k+1}$ for $k<j$, and (\ref{eqn:abcdLinBrack2}) becomes 
$$
[y_0, y_{-j}] = -(j+1) (e_{-j}+c_0e_{-j+1}) + y_{-j} - j c_{-j} y_{-(j-1)}.
$$
Consequently $e_{-j}+c_0e_{-j+1} \in \mathfrak S$ for all $j \in \z_{\ge 1}$.  

We have now shown that $e_j+ c_0 e_{j + 1} \in \mathfrak S$ for all $j \neq 1$.  For $j=1$, note that
$
[y_{-1}, y_2]=3(e_1+c_0e_2) + 2c_0 y_2.
$
Therefore, $e_1+c_0e_2\in \mathfrak S$. Lastly, we verify that $z \in \mathfrak S$ by the computation
$$
[y_{-2}, y_2]=y_0+4c_0y_1-\frac12 z.
$$
This completes the proof.
\end{proof}

\section{One-dimensional representations of $\Vir^{f}$}\label{sec:homsAndOneDimReps}

The central objects of study in this paper are $\Vir$-modules that are induced from one-dimensional modules for polynomial subalgebras. With this in mind, we use this section to consider Lie algebra homomorphisms $\mu: \Vir^f \rightarrow \cc$. Recall that we may assume that $f \in \cc[t]$ with lead coefficient $1$ and nonzero constant term. 

\begin{lem} \label{lem:homzerocentral}
Suppose $f= \sum_{i=0}^p a_i t^j $, where $p \geq 1$, $a_p=1$, and $ a_0 \neq 0$. If $\mu : \Vir^{f} \to \cc$ is a homomorphism, then $\mu (z) = 0$. 
\end{lem}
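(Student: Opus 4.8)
The key observation is that the central element $z$ of $\Vir$ only appears in brackets $[e_j, e_{-j}]$, so I need to produce a bracket relation inside $\Vir^f$ whose right-hand side has a nonzero coefficient on $z$ while every other term is forced to map to a specified value under $\mu$. The plan is to work with the basis elements $x_j^f = e_j + a_1 e_{j+1} + \cdots + a_{p-1} e_{j+p-1} + e_{j+p}$ of $\Vir^f$ (writing $a_p = 1$), and compute a bracket $[x_j^f, x_k^f]$ with $j + k$ chosen so that a central contribution appears. Since $\theta([x_j^f, x_k^f]) = (k-j) t^{j+k} f^2$, the bracket is a $\Vir^f$-element plus a multiple of $z$; the multiple of $z$ is collected from all pairs of indices in $x_j^f$ and $x_k^f$ that sum to zero, weighted by the cubic $\tfrac{m^3 - m}{12}$ factors.

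First I would fix $j$ and choose $k = -j - \ell$ for a suitable $0 \le \ell \le p$ so that at least one index pair sums to $0$; concretely, pairing $e_{j+a}$ from $x_j^f$ with $e_{k + b} = e_{-j - \ell + b}$ from $x_k^f$ gives a central term exactly when $a + b = \ell$, contributing $a_a a_b \,\tfrac{(j+a)^3 - (j+a)}{12}\, z$. Summing over such pairs gives a polynomial in $j$ (of degree $3$, with leading behaviour governed by $\sum_{a+b=\ell} a_a a_b$, which is a coefficient of $f^2$) times $z$. The idea is then to vary $j$: the left side $[x_j^f, x_k^f]$ lies in $\Vir^f$ for every $j$, and the non-central part of the right side is $(k - j) t^{j+k} f^2$ expanded in the $x_m^f$ basis, so applying $\mu$ gives, for each $j$, an equation of the form $(\text{polynomial in } j)\cdot \mu(z) = (\text{expression built from } \mu \text{ of the } x_m^f)$. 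If $\mu(z) \neq 0$, the left side is a genuinely cubic (hence unbounded) polynomial in $j$; I then need to show the right side grows too slowly, forcing a contradiction.

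The main obstacle is controlling the growth in $j$ of the right-hand side, i.e.\ of $\mu$ applied to the $\Vir^f$-component $(k-j)t^{j+k}f^2$ written in the $x_m^f$ basis. I expect the cleanest route is not a growth estimate but a structural one: show that $\mu(x_m^f)$ can be nonzero for only finitely many $m$ — indeed for $m$ outside a bounded range, $x_m^f$ lies in the derived subalgebra $[\Vir^f, \Vir^f]$ (again using that brackets of $x$'s produce $t^{j+k}f^2$, whose translates span all high enough and low enough degrees inside $\langle f \rangle$), so $\mu$ kills them. Granting that $\mu(x_m^f) = 0$ for $|m|$ large, the right-hand side of the displayed equation is bounded in $j$ (only finitely many $j$ make the non-central part meet the finite support of $\mu$), while the left-hand side $(\text{cubic in }j)\cdot\mu(z)$ is not; comparing for large $|j|$ gives $\mu(z) = 0$. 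So the real work is the lemma-within-the-lemma that $x_m^f \in [\Vir^f,\Vir^f]$ for all but finitely many $m$, which follows by checking that the elements $(k-j)t^{j+k}f^2$ span a finite-codimension subspace of $\langle f\rangle$ and pulling back through $\theta$, modulo $z$.
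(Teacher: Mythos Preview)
Your overall strategy is sound and is essentially the paper's approach, but you have overcomplicated the endgame with a false auxiliary claim. The assertion that $x_m^f \in [\Vir^f,\Vir^f]$ for all but finitely many $m$ is wrong: since $\theta([\Vir^f,\Vir^f])$ is spanned by the elements $(k-j)t^{j+k}f^2$ and hence equals $\langle f^2\rangle$, and since $t^m f \notin \langle f^2\rangle$ for any $m$ (as $f \nmid t^m$ in $\cc[t^\pm]$), no $x_m^f$ lies in the derived subalgebra modulo $\cc z$. Correspondingly, $\mu(x_m^f)$ need not vanish for large $|m|$; for $f = t-\lambda$ the paper's Lemma~\ref{lem:mupoly} gives $\mu(x_m^f) = c\lambda^m$, typically nonzero for every $m$.

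The good news is that you do not need that lemma at all, because you misread your own right-hand side. With $\ell$ fixed and $k = -j-\ell$, the sum $j+k = -\ell$ is \emph{constant}, so the non-central part $(k-j)t^{j+k}f^2 = (-2j-\ell)\sum_i a_i x_{-\ell+i}^f$ involves a \emph{fixed} finite set of basis vectors, and its image under $\mu$ is simply linear in $j$. Taking $\ell = 0$, the only central contribution comes from $a=b=0$, giving $z$-coefficient $a_0^2\tfrac{j^3-j}{12}$; thus $\mu([x_j^f,x_{-j}^f])=0$ becomes a polynomial identity in $j$ whose cubic coefficient is $\tfrac{a_0^2}{12}\mu(z)$, forcing $\mu(z)=0$. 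The paper executes exactly this idea using two specific values of $j$: it computes $[x_{-1}^f,x_1^f] = 2\sum_i a_i x_i^f$ (no $z$ term, since $\tfrac{1^3-1}{12}=0$) and $[x_{-(p+1)}^f,x_{p+1}^f] = 2(p+1)\sum_i a_i x_i^f + a_0^2\tfrac{(p+1)-(p+1)^3}{12}z$, so that the combination $[x_{-(p+1)}^f,x_{p+1}^f] - (p+1)[x_{-1}^f,x_1^f]$ is a nonzero multiple of $z$, and applying $\mu$ finishes.
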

\begin{proof} 

Recall the homomorphism $\theta: \Vir\rightarrow \cc[t^{\pm}]$. Then for $j,k \in \z$, (\ref{eqn:basicLie}) implies 
$\theta ( [x_j^f, x_k^f ] ) = [ t^j f, t^k f] = ( k-j ) t^{j+k} f^2.$
Therefore,
$$
[x_j^f, x_k^f ] = ( k-j) \left( \sum_{i=0}^p a_i x_{j + k + i}^f \right) + c_{j, k} z
$$
for some $c_{j,k} \in \cc$. For $p \ge 0$, we have 
\begin{align*}
[ x_{-(p+1)}^f, x_{(p+1)}^f ] = 2(p+1) \sum_{i=0}^p a_i x_{i}^f + a_0^2 \left( \frac{(p+1)-(p+1)^3}{12} \right) z,
\end{align*}
and in particular 
$$[x_{-1}^f, x_1^f ] =  2 \sum_{i=0}^p a_i x_{i}^f.$$
 Because $\mu ( [ \Vir^{f}, \Vir^f] ) = 0$, we have
$$
0 = \mu([ x_{-(p+1)}^f, x_{(p+1)}^f ]- (p+1) [x_{-1}^f, x_1^f ]) = a_0^2 \left( \frac{(p+1)-(p+1)^3}{12} \right) \mu (z). 
$$ 
Since $a_0 \neq 0$ and $p+1 \ge 2$, it follows that $\mu (z) = 0$.
\end{proof}

This lemma implies that any homomorphism $\mu : \Vir^{f} \to \cc$ defines a homomorphism $\mu' : \langle f \rangle \to \cc$ (which we identify with $\mu$), where $\mu'(t^k f)= \mu(x_k^f)$. Since any map $\mu' : \langle f \rangle \to \cc$ extends to a map $\mu = \mu' \circ \theta : \Vir^f \to \cc$, we may treat $V^{f}_{\mu}$ interchangeably as a $\Vir$-module or a $\cc[t^{\pm}]$-module, where the $\cc[t^{\pm}]$-action on $V^{f}_{\mu}$ is given by $t^k v= e_k v$  for $k \in \z$, $v \in V^{f}_{\mu}$. 


The following lemma characterizes all homomorphisms $\mu: \langle f \rangle \rightarrow \cc$.
\begin{lem} \label{lem:mupoly}
Let $\lambda_1, \ldots, \lambda_k \in \cc^\times$ be distinct, and $n_1, \ldots, n_k \in \z_{\geq 1}$. Define $f=\prod_{i=1}^k (t-\lambda_i)^{n_i} $.   Then a linear map $\mu: \langle f \rangle \rightarrow \cc$ is a homomorphism if and only if $\mu(t^j f) =p_1(j)\lambda_1^j+ \cdots + p_k(j) \lambda_k^j$ for some polynomials $p_1, \ldots,p_k$ such that $\deg(p_i) < n_i$.
\end{lem}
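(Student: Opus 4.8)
The plan is to reduce the homomorphism condition to a concrete system of relations and then solve it using the structure of $\langle f\rangle$ as a module for the translation operator $t^j f \mapsto t^{j+1}f$. First I would record what being a homomorphism means: a linear map $\mu:\langle f\rangle\to\cc$ is a Lie algebra homomorphism iff $\mu([a,b])=0$ for all $a,b\in\langle f\rangle$, and by bilinearity it suffices to check this on the spanning set $\{t^j f\mid j\in\z\}$. Using $[t^j f, t^k f]=(k-j)t^{j+k}f^2$ from~(\ref{eqn:basicLie}), and writing $f^2 = \sum_i b_i t^i$ for the (finitely supported) coefficients of $f^2$, the condition becomes
\begin{equation*}
(k-j)\sum_i b_i\,\mu(t^{j+k+i}f)=0 \qquad\text{for all }j,k\in\z.
\end{equation*}
Taking $k-j=1$ (and noting $j+k$ ranges over all of $\z$ as $j$ varies) this is equivalent to the single family of linear recurrences $\sum_i b_i\,\mu(t^{m+i}f)=0$ for all $m\in\z$; conversely this family clearly implies the original condition. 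So the problem is: determine all bi-infinite sequences $(c_m)_{m\in\z}$, $c_m:=\mu(t^m f)$, annihilated by the constant-coefficient recurrence with characteristic polynomial $f^2 = \prod_i (t-\lambda_i)^{2n_i}$.

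Next I would invoke (or quickly prove) the standard description of the solution space of such a recurrence: the space of bi-infinite sequences killed by the operator $\prod_i (S-\lambda_i)^{2n_i}$ — where $S$ is the shift $(Sc)_m = c_{m+1}$ — has dimension $\sum_i 2n_i=\deg(f^2)$, with basis $\{m^s\lambda_i^m : 1\le i\le k,\ 0\le s< 2n_i\}$, because the $\lambda_i$ are distinct and nonzero. Hence $\mu(t^m f)=\sum_i q_i(m)\lambda_i^m$ for uniquely determined polynomials $q_i$ with $\deg q_i<2n_i$. This already shows the claimed form but with the weaker degree bound $2n_i$ instead of $n_i$, so the crux of the argument is to sharpen $2n_i$ down to $n_i$. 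This is the step I expect to be the main obstacle.

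To get the sharper bound I would bring back the full strength of the relations rather than just the $k-j=1$ case. The relations $\sum_i b_i\,\mu(t^{m+i}f)=0$ for all $m$ come from $f^2$, but there are in fact more relations hiding in the structure: note that $[t^jf,t^kf]$ lies in $\langle f\rangle$ and the homomorphism $\mu$ must vanish on the \emph{Lie} ideal it generates, and more usefully, one should exploit that the recurrence actually comes from multiplication by $f$ twice. Concretely, the factorization $f^2 = f\cdot f$ suggests looking at the "square root" operator: if we let $T_f$ denote the operator on sequences with characteristic polynomial $f$ (i.e. convolution by the coefficients of $f$), then $\mu$ being a homomorphism forces not merely $T_f^2(c)=0$ but — I claim — $T_f(c)=0$ itself. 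The cleanest way to see this: pick generators and evaluate $\mu$ on brackets of the form $[t^jf, t^kf]$ for several values of $k-j$; the coefficient patterns involve $(k-j)f^2$, but comparing different scalings (as in the trick used in Lemma~\ref{lem:homzerocentral}, where $[x^f_{-(p+1)},x^f_{p+1}]$ was compared with a multiple of $[x^f_{-1},x^f_1]$) lets one cancel and isolate relations governed by $f$ rather than $f^2$. Once one establishes that $(c_m)$ is annihilated by $\prod_i(S-\lambda_i)^{n_i}$, the same basis computation gives $\deg q_i<n_i$, which is exactly the assertion. Finally, for the converse direction, I would simply check directly that any sequence of the form $\sum_i p_i(m)\lambda_i^m$ with $\deg p_i<n_i$ is annihilated by $\prod_i(S-\lambda_i)^{n_i}$, hence a fortiori by the $f^2$-recurrence, hence defines a homomorphism — this half is a routine verification.
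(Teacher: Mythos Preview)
There is a concrete miscomputation at the very first step that creates an artificial obstacle. You write $f^2=\sum_i b_i t^i$ and claim the homomorphism condition becomes $(k-j)\sum_i b_i\,\mu(t^{j+k+i}f)=0$. But $\sum_i b_i\,t^{j+k+i}f$ equals $t^{j+k}f^2\cdot f=t^{j+k}f^3$, not $t^{j+k}f^2$. To evaluate $\mu$ on $[t^jf,t^kf]=(k-j)t^{j+k}f^2$ you must express $t^{j+k}f^2$ in the spanning set $\{t^m f\}$ of $\langle f\rangle$, and since $t^{j+k}f^2=\bigl(\sum_i a_i t^{j+k+i}\bigr)f$ where $f=\sum_i a_i t^i$, the correct relation is
\[
(k-j)\sum_{i=0}^{p} a_i\,\mu(t^{j+k+i}f)=0.
\]
Thus the recurrence satisfied by $c_m=\mu(t^m f)$ has characteristic polynomial $f=\prod_i(t-\lambda_i)^{n_i}$, not $f^2$, and the standard theory immediately gives $\deg p_i<n_i$. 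This is exactly what the paper does.

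Consequently your entire ``sharpening from $2n_i$ to $n_i$'' program is unnecessary, and the method you sketch for it would not work in any case: every bracket $[t^jf,t^kf]$ is a scalar multiple $(k-j)$ of the same element $t^{j+k}f^2$, so varying $k-j$ produces no new linear relations among the $c_m$. The analogy with Lemma~\ref{lem:homzerocentral} fails because the extra information there came from the central term $c_{j,k}z$, which is absent in $\langle f\rangle$. Once you fix the expansion, both directions of the lemma follow at once from the recurrence-relation characterization.
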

\begin{proof}
Consider a linear map $\mu: \langle f \rangle \rightarrow \cc$. For each $j \in \z$, write $\mu_j = \mu ( t^j f)$.  Then $\mu$ is a well-defined homomorphism if and only if 
\begin{equation} \label{eqn:recurrence}
0= \mu ( [t^j f , t^\ell f ]) =(\ell-j) \sum_{i=0}^p a_i \mu(t^{j+\ell+i}f) = (\ell-j) \sum_{i=0}^p a_i \mu_{j+\ell+ i}
\end{equation}
for all $j,\ell \in \z$.  This implies that $\sum_{i=0}^p a_i \mu_{m+i} = 0$ for all $m \in \z$ (e.g. take $j = 0$ and $\ell \neq 0$, then take $\ell = 1$ and $j= -1$).  Thus the sequence $\mu_j$ form a linear homogeneous recurrence relation with characteristic polynomial $f$.  It it follows from the theory of recurrence relations that the solutions to this relation $\mu_j$, $j \geq 0$, are precisely of the form $\mu_j =p_1(j)\lambda_1^j+ \cdots + p_k(j) \lambda_k^j$ for polynomials $p_1, \ldots, p_k$ with $\deg(p_i) < n_i$. (See, for example, \cite[Cor. 2.24]{Elaydi95}.)  Moreover, by varying the index of the initial term (from $0$ to any integer), we obtain this solution for all $j \in \z$.  
\end{proof}

Define the degree of the zero polynomial to be $-1$. 


\begin{lem} \label{lem:degreehom}
Let $\lambda \in \cc^\times$, $n \in \z_{\ge 1}$, and define $f=t-\lambda$. Suppose $\mu: \langle f^n\rangle  \rightarrow \cc$ is a homomorphism such that $\mu(t^j f^n) = p(j) \lambda^j$ for some nonzero polynomial $p(t) \in \cc[t]$ of degree $r < n$.  Then 
\begin{itemize}
\item there are finitely many pairs $(j, m)$ with $j \in \z$ and $n \leq m \leq n+r$ such that $\mu(t^j f^m) =0$;
\item $\mu(t^j f^m)=0$ for all pairs $(j, m)$ such that $j \in \z$ and $m >n+r$.
\end{itemize}
\end{lem}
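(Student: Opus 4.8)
The plan is to reduce everything to a single explicit formula for $\mu(t^j f^m)$ when $m\ge n$, expressing it through iterated finite differences of the polynomial $p$, and then to read off both bullet points from the elementary theory of finite differences.

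First I would observe that for $m\ge n$ we have $\langle f^m\rangle\subseteq\langle f^n\rangle$, since $t^j f^m=(t^j f^{m-n})f^n$; hence $\mu(t^j f^m)$ is computed by expanding $t^j f^m$ as a $\cc$-linear combination of the generators $t^\ell f^n$ of $\langle f^n\rangle$ and using linearity of $\mu$ together with $\mu(t^\ell f^n)=p(\ell)\lambda^\ell$. Expanding $f^{m-n}=(t-\lambda)^{m-n}$ by the binomial theorem gives
\[
t^j f^m=\sum_{i=0}^{m-n}\binom{m-n}{i}(-\lambda)^{m-n-i}\,t^{j+i}f^n,
\]
so that
\[
\mu(t^j f^m)=\sum_{i=0}^{m-n}\binom{m-n}{i}(-\lambda)^{m-n-i}p(j+i)\lambda^{j+i}
=\lambda^{j+m-n}\sum_{i=0}^{m-n}(-1)^{m-n-i}\binom{m-n}{i}p(j+i).
\]
The inner sum is exactly $(\Delta^{m-n}p)(j)$, where $\Delta$ is the forward difference operator $(\Delta g)(x)=g(x+1)-g(x)$. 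Thus the key identity is $\mu(t^j f^m)=\lambda^{j+m-n}(\Delta^{m-n}p)(j)$, valid for all $j\in\z$ and all $m\ge n$; note that taking $m=n$ recovers the hypothesis.

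Next I would invoke the standard fact that $\Delta$ lowers the degree of a polynomial by exactly one and annihilates constants (which in turn follows in one line from $\Delta(x^d)=d\,x^{d-1}+(\text{lower order})$). Iterating, $\Delta^k p$ is a polynomial of degree $r-k$ — in particular \emph{nonzero} — for $0\le k\le r$, and $\Delta^k p\equiv 0$ for $k>r$. With this, the second bullet is immediate: if $m>n+r$ then $m-n>r$, so $\Delta^{m-n}p\equiv 0$ and $\mu(t^j f^m)=0$ for every $j\in\z$. For the first bullet, fix $m$ with $n\le m\le n+r$ and put $k=m-n\in\{0,\dots,r\}$; since $\lambda\ne 0$, the identity shows $\mu(t^j f^m)=0$ precisely when $j$ is a root of the nonzero polynomial $\Delta^k p$, which has at most $r-k$ roots. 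Hence for each of the finitely many admissible values of $m$ there are at most $r-k$ such $j$, giving at most $\sum_{k=0}^{r}(r-k)=\binom{r+1}{2}$ pairs $(j,m)$ in total.

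I do not anticipate a real obstacle: the only points requiring care are the bookkeeping in the binomial expansion, recognizing the resulting alternating sum as the iterated finite difference, and keeping track of the fact that $\deg p=r$ exactly (guaranteed by the hypothesis), which is what makes $\Delta^k p$ genuinely nonzero for $k\le r$ and hence forces the set of bad $j$ to be finite rather than all of $\z$.
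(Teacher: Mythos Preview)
Your proof is correct and follows essentially the same route as the paper: both identify $\mu(t^j f^m)$ with $\lambda^{j+m-n}(\Delta^{m-n}p)(j)$ and then use that the forward difference lowers polynomial degree by exactly one. The only cosmetic difference is that the paper builds this up by induction on $m$ via the one-step identity $\mu(t^j f^m)=\mu(t^{j+1}f^{m-1})-\lambda\,\mu(t^j f^{m-1})$, whereas you expand $(t-\lambda)^{m-n}$ all at once with the binomial theorem to get the closed form directly.
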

\begin{proof}
To prove this lemma, it is enough to show that $\mu(t^j f^m) = p_m(j) \lambda^j$ for a polynomial $p_m$ where 
\begin{itemize}
\item $\deg (p_m)=n+r -m$  if $m \leq n+r$;
\item  $p_m=0$  if $m > n+r$.
\end{itemize}
We prove this by induction on $m$.  For $m=n$, this follows from the assumptions of the lemma.  For $m>n$, we use the inductive hypothesis to conclude
\begin{align*}
\mu(t^j f^m)&= \mu(t^{j+1} f^{m-1} -\lambda t^j f^{m-1}) \\
&= \mu(t^{j+1} f^{m-1}) -\lambda \mu(t^j f^{m-1})\\
&= \left( p_{m-1}(j+1) - p_{m-1}(j)  \right) \lambda^{j+1}.
\end{align*}
If we let $p_m(x) = \lambda (p_{m-1}(x+1) - p_{m-1}(x))$, it follows from the binomial theorem that if $p_{m-1} \neq 0$, then $\deg ( p_m) = \deg (p_{m-1}) -1$.  
\end{proof}

Using the notation of the Lemma \ref{lem:degreehom}, suppose $\mu: \Vir^{(t-\lambda)^n} \rightarrow \cc$ is a homomorphism such that $\mu ( t^j f^n) = p(j) \lambda^j$ for some polynomial $p$ of degree $r$.  Then we say that $p$ is {\it associated} to $\mu$, and $\mu$ has {\it degree} $r$.

\section{The induced modules $V^{(t-\lambda)^n}_{\mu}$} \label{sec:singleroot}
The modules we wish to consider in this paper are 
$$
V^{f}_{\mu} = U(\Vir) \otimes_{\Vir^{(t-\lambda)^n}} \cc_{\mu}
$$
for arbitrary polynomials $f \in \cc[t]$ and Lie algebra homomorphisms $\mu: \Vir^f \rightarrow \cc$. As a first step in understanding these modules, we restrict to polynomials $f=(t-\lambda)^n$ where $n \in \z_{\ge 1}$ and $\lambda \in \cc^\times$.   
Section \ref{subsec:basisNotation} establishes bases for these modules. In Section \ref{sec:n=1}, we further restrict to the case $f=t-\lambda$; we believe the simplicity of the calculations in this case serve to highlight the structure of the modules $V_\mu^f$. The remainder of the section is used to establish a number of technical lemmas about $V^{(t-\lambda)^n}_\mu$,  which we use to study (in the Section \ref{sec:tensorproducts}) $V^{f}_{\mu}$ for general polynomials $f$.

\begin{conv} \label{conv}
We use the following notation throughout this section.  Fix $\lambda \in \cc^\times$, $n \in \z_{\ge 1}$, and define $f=t-\lambda$. Let $\mu: \Vir^{f^n} \rightarrow \cc$ be a homomorphism, $p$ the polynomial associated to $\mu$, and write $\deg(p)=r$. We summarize this as $(\lambda, n, f; \mu, p, r)$. Finally, write $v_\mu=1 \otimes 1 \in V_{\mu}^{f^n}$, the canonical generator of $V_{\mu}^{f^n}$.  
 \end{conv}
Also throughout this section, we implicitly use Lemma \ref{lem:homzerocentral}  to treat $V^{f^n}_{\mu}$ at a $\cc [t^{\pm}]$-module. We also make frequent use of the calculation
$$
[t^j f^m, t^k f^\ell] = (\ell - m)t^{j+k+1} f^{\ell+m-1} +(k-j) t^{j+k} f^{\ell+m}. 
$$

\subsection{Bases for $V^{(t-\lambda)^n}_{\mu}$ and associated notation}\label{subsec:basisNotation}

 In the following lemma, we use the notation $u^{[s]}$ from Section \ref{sec:notationVir}, where $u \in \cc[t^{\pm}]$ and $s \in \z_{\geq 0}$.

\begin{lem} \label{lem:inducedbases}
The set 
\begin{align}
\{ (f^0)^{[s_0]} \cdot (f^1)^{[s_1]} \cdots (f^{n-1})^{[s_{n-1}]} v_\mu \mid s_0, \ldots, s_{n-1} \in \z_{\ge 0} \}; \label{basis1} 
\end{align}
is a basis for the module $V^{f^n}_{\mu}$.  
\end{lem}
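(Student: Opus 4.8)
The plan is to realize $V^{f^n}_\mu$ as $U(\Vir)\otimes_{\Vir^{f^n}}\cc_\mu$ and use the PBW theorem to produce a concrete spanning set, then cut it down to the claimed basis. First I would exploit the isomorphism (as vector spaces) $V^{f^n}_\mu \cong U(\Vir)/U(\Vir)\cdot\ker\mu$, together with the fact (from Lemma \ref{lem:homzerocentral}) that the $\cc[t^\pm]$-action is well defined, so it is enough to work inside the associative algebra $U(\cc[t^\pm])$ modulo the left ideal generated by $\{t^kf^n - \mu(t^kf^n)\cdot 1 \mid k\in\z\}$. The key algebraic point is a change of basis for the vector space $\cc[t^\pm]$: since $\lambda\neq 0$, the elements $\{t^jf^m \mid j\in\z,\ 0\le m\le n-1\}$ together with $\langle f^n\rangle$ span $\cc[t^\pm]$ (indeed $\{t^jf^m\mid j\in\z,\ m\ge 0\}$ is already a basis of $\cc[t^\pm]$, obtained from the monomial basis by a ``unitriangular'' substitution $t = f+\lambda$, and the span of those with $m\ge n$ is exactly $\langle f^n\rangle$). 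Consequently $\cc[t^\pm]$ has basis $\{t^jf^m\mid j\in\z,\ 0\le m\le n-1\}$ modulo $\langle f^n\rangle$, and these $t^jf^m$ form a basis of a vector-space complement to $\Vir^{f^n}$ in $\Vir$ (lifting each $t^jf^m$ to some $x^{f^m}_j\in\Vir$).

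Next I would invoke PBW for the pair $\Vir^{f^n}\subseteq\Vir$: choosing an ordered basis of $\Vir$ consisting of a basis of $\Vir^{f^n}$ followed by the complement elements just described, $U(\Vir)$ is free as a right $U(\Vir^{f^n})$-module on the ordered monomials in the complement, so $V^{f^n}_\mu$ has a basis given by ordered monomials $\prod (x^{f^m}_j)^{[s_{j,m}]}\, v_\mu$ over all $j\in\z$, $0\le m\le n-1$. This is still far larger than the claimed set \eqref{basis1}, which only allows one ``degree'' in $t$ per power $f^m$ — i.e. it only uses the elements $f^0, f^1,\dots,f^{n-1}$ (the $j=0$ representatives) and not $t^jf^m$ for $j\neq 0$. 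So the heart of the argument is a straightening/reduction: show that the action of any $x^{f^m}_j$ (equivalently, multiplication by $t^jf^m$ in $U(\cc[t^\pm])$) on $v_\mu$ can be rewritten, modulo lower terms, in terms of the generators $f^0,\dots,f^{n-1}$ alone. Concretely, using the commutator identity recorded just before Section \ref{subsec:basisNotation},
\[
[t^jf^m,\, t^kf^\ell] = (\ell-m)t^{j+k+1}f^{\ell+m-1} + (k-j)t^{j+k}f^{\ell+m},
\]
one can express $t^{j}f^{m}\cdot v_\mu$ in terms of $t^{j-1}f^{m}$ and commutators that either raise the power of $f$ (eventually landing in $\langle f^n\rangle$, hence acting by scalars via $\mu$, and by Lemma \ref{lem:degreehom} vanishing once the power exceeds $n+r$) or reduce the $t$-degree; iterating drives every $t^jf^m$ down to the $j=0$ case. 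I would set this up as a downward/upward induction on $|j|$ with a secondary induction on $m$, so that the spanning set shrinks to exactly \eqref{basis1}.

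Finally, for linear independence I would argue that the natural surjection from the span of \eqref{basis1} onto $V^{f^n}_\mu$ cannot collapse, by exhibiting enough structure to separate the monomials — e.g. by filtering $U(\Vir)$ by total degree and checking that the symbols of the elements in \eqref{basis1} are linearly independent in the associated graded $\mathrm{gr}\,V^{f^n}_\mu$, which by PBW is a polynomial ring on the symbols of the complement basis; since \eqref{basis1} is a sub-collection of PBW monomials, its symbols are automatically independent, and a dimension count (Hilbert series in each graded piece) forces the reduction map above to be a bijection. The main obstacle I expect is the straightening step: carefully controlling the ``error terms'' produced by the commutator identity — ensuring the induction is genuinely well-founded (that raising the $f$-power terminates because of $\langle f^n\rangle$ and Lemma \ref{lem:degreehom}, while lowering $|j|$ terminates at $j=0$) and that no monomial outside \eqref{basis1} survives. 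Everything else (the change of basis $t=f+\lambda$, PBW, the graded dimension count) is routine once that reduction is in place.
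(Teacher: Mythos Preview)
Your proposal contains a genuine error at the key algebraic step. You claim that $\{t^j f^m \mid j\in\z,\ 0\le m\le n-1\}$ is a basis of $\cc[t^\pm]$ modulo $\langle f^n\rangle$, but this is false: these elements are highly redundant. In fact $\cc[t^\pm]/\langle f^n\rangle$ is only $n$-dimensional. Since $\lambda\neq 0$, the polynomial $(t-\lambda)^n$ is coprime to $t$ in $\cc[t]$, so inverting $t$ does not change the quotient and $\cc[t^\pm]/\langle f^n\rangle \cong \cc[t]/(t-\lambda)^n$, with basis $\{1,f,\ldots,f^{n-1}\}$. (Your parenthetical that $\{t^j f^m \mid j\in\z,\ m\ge 0\}$ is a basis of $\cc[t^\pm]$ ``by a unitriangular substitution $t=f+\lambda$'' is also false: already $t\cdot f^0 = f^1 + \lambda f^0$, so the set is not linearly independent; the substitution only gives $\{f^m\mid m\ge 0\}$ as a basis of $\cc[t]$, not $\cc[t^\pm]$.)

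Once you see that the complement to $\langle f^n\rangle$ in $\cc[t^\pm]$ is exactly $n$-dimensional with basis $\{f^0,f^1,\ldots,f^{n-1}\}$, the entire straightening program collapses to nothing: PBW for the pair $\langle f^n\rangle\subseteq\cc[t^\pm]$ (equivalently $\Vir^{f^n}\subseteq\Vir$) with this choice of complementary basis gives immediately that $U(\cc[t^\pm])$ is free as a right $U(\langle f^n\rangle)$-module on the ordered monomials $(f^0)^{[s_0]}\cdots(f^{n-1})^{[s_{n-1}]}$, and tensoring with $\cc_\mu$ yields \eqref{basis1} as a basis of $V^{f^n}_\mu$. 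This is precisely the paper's two-line proof. Your commutator-based induction, the appeal to Lemma~\ref{lem:degreehom}, and the graded dimension count are all unnecessary; moreover, your linear-independence argument (``sub-collection of PBW monomials'') does not go through as stated, since you never actually had a basis to feed into PBW in the first place.
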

\begin{proof}
Note that the basis $\{ t^j f^n \mid i \in \z\}$ for $\langle f^n \rangle$ can be extended to a basis for $\cc[t^{\pm}]$ by the set 
$$\{ f^0, f^1, \ldots, f^{n-1} \}.$$
Therefore, from \cite[PBW Theorem, Cor. 2]{MP95}, $U(\cc[t^{\pm}])$ is a free $\langle f^n \rangle$-module with basis 
\begin{align*}
&\{ (f^0)^{[s_0]} \cdot (f^1)^{[s_1]} \cdots (f^{n-1})^{[s_{n-1}]} \mid s_0, \ldots, s_{n-1} \in \z_{\ge 0} \}.
\end{align*}  It follows that (\ref{basis1}) is a basis.  
\end{proof}

We now establish notation to describe this basis.  Let $\z_{\ge 0}^n$ denote the set of all $n$-tuples of elements of $\z_{\ge 0}$.  For ${\overline s} = (s_0, \ldots, s_{n-1}) \in \z_{\ge 0}^n$, define 
\begin{align*}
f^{\overline s} &= (f^0)^{[s_0]}  \cdot (f^1)^{[s_1]}  \cdot \cdots \cdot (f^{n-1})^{[s_{n-1}]}. 
\end{align*}
Then $\{ f^{\bar s} v_\mu \mid \bar s \in \z_{\ge 0}^n \}$ is a basis for $V^{f^n}_\mu$. Define
$$| \overline s| = \sum_{i=0}^{n-1} s_i.$$
  Define $\ell : \z_{\ge 0}^n \setminus \{ (0, \ldots , 0) \} \to \{ 0, \ldots , n-1 \}$ by 
$$\ell ( \overline s) = \min \{ i \mid s_i > 0 \}$$
and 
\begin{align*}
{\mathcal D}({\overline s})&= (0, \ldots, 0, s_{\ell(\bar s)} - 1, s_{{\ell(\bar s)}+1}, \ldots, s_{n-1} ); \\
{\mathcal D}^j({\overline s})
&=(0, \ldots, 0, s_{\ell(\bar s)} - j, s_{{\ell(\bar s)}+1}, \ldots, s_{n-1} ) \quad \mbox{for $1 \leq j \leq s_{\ell (\overline s)}$} \\&=\underbrace{\mathcal D \circ \cdots \circ \mathcal D}_{\mbox{$j$ times}}.
\end{align*}  
Note that $f^{\overline s} = f^{\ell(\bar s)} \cdot f^{{\mathcal D}({\overline s})}$.

We use a lexicographic ordering on $\z_{\ge 0}^n$: for ${\overline s } = (s_0, \ldots , s_{n-1}), \overline r=(r_0, \ldots, r_{n-1})\in \z_{\ge 0}^n$, write ${\overline r} \succ {\overline s }$ if there exists $j \in \{ 0, \ldots, n-1 \}$ such that $r_i = s_i$ for $i < j$ and $r_{j} > s_j$.

\subsection{$V^f_\mu$ for linear polynomials} \label{sec:n=1}


 In this section, we consider the induced module $V^f_\mu$ where $f=t-\lambda$ ($\lambda \in \cc^\times$) is a linear polynomial, giving irreducibility conditions on $\mu$ and showing that these modules are isomorphic to the modules $\Omega(\lambda, b)$ constructed in \cite{LZ14}. The results presented in this section are fully contained in the more general results presented later in the paper. However, we believe that considering this special case helps to illuminate the benefits of the induced construction of the modules $V_\mu^f$.

For $k > 0$, it is easy to verify that (in the associative algebra $\cc [t, t^{-1}]$)
\begin{align*}
t^k - \lambda^k t^0 &= \left( \sum_{i=0}^{k-1} t^{k-1-i} \lambda^i  \right)f.
\end{align*}
A similar calculation follows for $-k$, using $t^{-k} - \lambda^{-k} t^0 = - (t^{-k} \lambda^{-k}) ( t^k - \lambda^k)$.  Applying Lemma \ref{lem:mupoly}, for all $k \in \z$, we have
\begin{equation}\label{eqn:t^sv_mu}
t^k v_\mu = \lambda^k t^0 v_\mu + k \lambda^{k-1} \mu_0 v_\mu, \quad \mbox{where} \ \mu_0=\mu(f).
\end{equation}

\begin{prop}\label{thm:t-1_simple}
Let $f = t - 1$ and $\mu: \Vir^f \rightarrow \cc$ be a Lie algebra homomorphism. Then $V_\mu^f$ is simple if and only if $\mu_0 = \mu (f) \neq 0$. 
\end{prop}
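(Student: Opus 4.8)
The plan is to exploit the explicit $\cc[t^{\pm}]$-module structure on $V_\mu^f$ recorded in~\eqref{eqn:t^sv_mu}. When $f=t-1$ we have $\lambda=1$, so $t^k v_\mu = v_\mu + k\mu_0 v_\mu$ where $v_\mu = t^0 v_\mu$ is the canonical generator, and by Lemma~\ref{lem:inducedbases} the set $\{ v_\mu, f^{[s]} v_\mu \mid s \geq 1\}$ (writing $f^{[s]}$ for the $s$-th power of $f$ in $U(\cc[t^{\pm}])$) is a basis of $V_\mu^f$ — here $n=1$, so the only "extra" basis element beyond $\langle f\rangle$ is $f^0 = t^0$, and the module is spanned by $v_\mu$ together with all $f^{[s]}v_\mu$. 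First I would dispose of the easy direction: if $\mu_0 = 0$, then~\eqref{eqn:t^sv_mu} gives $t^k v_\mu = v_\mu$ for all $k$, so $e_k v_\mu = v_\mu$ and $\cc v_\mu$ is a one-dimensional submodule, which is proper since $\dim V_\mu^f = \infty$; hence $V_\mu^f$ is not simple.

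For the converse, assume $\mu_0 \neq 0$ and let $N$ be a nonzero submodule; I want to show $N = V_\mu^f$. The key mechanism is that the operators $e_k = t^k$ act on $V_\mu^f$, and more importantly the brackets $[e_j, e_k]$-type relations let one produce elements of the form $f^{[s+1]}v$ from $f^{[s]}v$. Concretely, I would compute how $e_k$ acts on a basis vector $f^{[s]}v_\mu$: using the Leibniz-type identity for $[\,\cdot\,,\,\cdot\,]$ and the calculation $[t^j f^m, t^k f^\ell] = (\ell-m)t^{j+k+1}f^{\ell+m-1} + (k-j)t^{j+k}f^{\ell+m}$ recorded just after Convention~\ref{conv} (specialized to $m=1$, $\ell = s$), one finds that $e_k \cdot f^{[s]}v_\mu$ is a combination of $f^{[s']}v_\mu$ for $s' \leq s+1$, with the coefficient of the top term $f^{[s+1]}v_\mu$ being a nonzero scalar (something like $(k-j)$ for suitable choices, or more precisely a linear-in-$k$ expression with nonzero leading behavior). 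Taking an element of minimal "$f$-degree" in $N$ and hitting it with suitable $e_k$'s (and taking differences to kill lower-order terms, exactly as in the proof of Proposition~\ref{prop:codim1}), I would show $N$ contains an element of each $f$-degree, and ultimately contains $v_\mu$; then since $v_\mu$ generates $V_\mu^f$, we get $N = V_\mu^f$.

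The main obstacle, and the step deserving the most care, is the base of this descent: showing that a minimal-degree nonzero element of $N$ can be normalized to $v_\mu$ itself. If $w \in N$ has the form $w = \sum_{s=0}^{S} c_s f^{[s]}v_\mu$ with $c_S \neq 0$, applying $e_k - e_0$ to $w$ should strictly decrease $S$ (because $e_0$ acts "diagonally enough" that the top term cancels appropriately while lower interactions survive with nonzero coefficient, using $\mu_0 \neq 0$), so by iterating we reach a nonzero multiple of $v_\mu$; but one must check the relevant coefficient is genuinely nonzero, which is exactly where the hypothesis $\mu_0 \neq 0$ must be used — if $\mu_0 = 0$ this descent stalls, consistent with the easy direction. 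I would verify this by a direct computation of $(e_k - e_0)\cdot f^{[S]}v_\mu$ modulo lower-degree terms, showing the coefficient of $f^{[S-1]}v_\mu$ involves $\mu_0$ linearly (or a similar nonvanishing check), and then invoke a Vandermonde-type argument over finitely many values of $k$ to extract each individual $f^{[s]}v_\mu$, completing the proof that $N = V_\mu^f$.
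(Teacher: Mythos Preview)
Your proposal rests on a misreading of the basis of $V_\mu^f$. You assert ``$v_\mu = t^0 v_\mu$'' and take $\{f^{[s]} v_\mu \mid s \geq 0\}$ as a basis, but both are wrong. The element $t^0 \in \cc[t^\pm]$ acts as $e_0 \in \Vir$, which does \emph{not} lie in $\Vir^f$ (for $f = t-1$ the subalgebra is spanned by $z$ and the elements $e_{j+1} - e_j$), so $t^0 v_\mu = e_0 v_\mu$ is a genuinely new vector, not a multiple of $v_\mu$. The basis supplied by Lemma~\ref{lem:inducedbases} for $n=1$ is $\{(f^0)^{[s]} v_\mu\} = \{(t^0)^{[s]} v_\mu \mid s \geq 0\}$, i.e.\ powers of $e_0$ applied to $v_\mu$. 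By contrast $f = t-1 \in \langle f \rangle$ acts as the scalar $\mu_0$, so $f^{[s]} v_\mu = \mu_0^s v_\mu$ are all collinear: your ``$f$-degree'' filtration collapses to a single line, and the bracket computation $[t^j f^m, t^k f^\ell]$ you invoke with $m=1$ is irrelevant to moving between basis vectors.

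The same confusion breaks your easy direction. When $\mu_0 = 0$, equation~\eqref{eqn:t^sv_mu} gives $t^k v_\mu = t^0 v_\mu$ for all $k$, not $v_\mu$; since $t^0 v_\mu \notin \cc v_\mu$, the line $\cc v_\mu$ is \emph{not} a submodule. The correct proper submodule is ${\rm span}\{(t^0)^{[s]} v_\mu \mid s \geq 1\}$. Once one uses the correct basis, the argument the paper runs is close in spirit to what you describe at the end: apply $t^k - t^0$ to a minimal-degree element $h(t^0) v_\mu$ of a nonzero submodule and compute $(t^k - t^0)\cdot h(t^0) v_\mu = g_k(t^0) v_\mu$ with $g_k(x) = h(x-k)(x + k\mu_0) - h(x)x$, which has degree at most $\deg h$. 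Evaluating at a root $\xi$ of $h$ gives $g_k(\xi) = h(\xi - k)(\xi + k\mu_0)$, a nonzero polynomial in $k$ precisely because $\mu_0 \neq 0$, so $g_k \notin \cc h$ for some $k$ and minimality is contradicted. Your closing paragraph gestures toward this mechanism, but none of the scaffolding you set up (the $f^{[s]}$, the raising of $f$-degree, the $\cc v_\mu$ submodule) survives the correction.
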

\begin{proof}

For any $h(t) =\sum_i a_i t^i\in \cc[t]$, define $h(t^0)= \sum_i c_i (t^0)^{[i]}v_\mu$.  Then Lemma \ref{lem:inducedbases} implies that for any $ v \in V_\mu^f$, there is some $h \in \cc[t]$ such that $v=h(t^0) v_\mu$. 

Suppose $\mu_0 \neq 0$, and let $M$ be a nontrivial submodule of $V_\mu^f$. We argue that $M= V_\mu^f$. Choose $0 \neq v \in M$ of the form $v = h(t^0) v_\mu$ with $\deg (h)$ minimal.  If $\deg (h) = 0$, then clearly $M = V_\mu$. Now we suppose $\deg (h) > 0$ and produce a contradiction. In particular, it is enough to show that there is some polynomial $0 \neq g \in \cc[t]$ such that $\deg(g) \leq \deg(h)$, $g \not\in \cc h$, and $g(t^0) v_{\mu} \in M$.

Write $h(t) = \sum_i c_i t$.  Note that $[t^k, t^0] = - k t^k$ implies $t^k \cdot (t^0)^{[i]} = ( t^0 - k)^{[i]} \cdot t^k$. Then it follows from (\ref{eqn:t^sv_mu}) that
\begin{align*}
(t^k-t^0) \cdot v &=(t^k-t^0) \cdot h(t^0) v_{\mu}\\
&=\sum_{i=0}^n c_i ( t^0 - k)^{[i]} \cdot t^k v_\mu - \sum_{i=0}^n c_i (t^0)^{[i+1]}v_\mu\\
& = \sum_{i=0}^n c_i ( t^0 - k)^{[i]} \cdot (  t^0 + k \mu_0  ) v_\mu- \sum_{i=0}^n c_i (t^0)^{[i+1]}v_\mu.
\end{align*}
That is, $(t^k - t^0) \cdot v = g_k (t^0) v \in M$, where $g_k (t)= h(t -k) (t + k \mu_0) - h(t) t$.
It is clear that $\deg (g_k) \le \deg(h)$.  

To complete the proof, we show that there exists $k \in \z$ such that $g_k \not\in \cc h$.  Let $\xi \in \cc$ such that $h( \xi ) = 0$.  On the other hand,
$$g_k ( \xi ) = h(\xi -k) ( \xi + k \mu_0) - h( \xi ) \xi = h(\xi -k) ( \xi + k \mu_0).$$
Since $\mu_0 \neq 0$, then $\xi + k \mu_0 \neq 0$ for $k \neq -\xi/\mu_0$.  Since $\deg(h)>0$, it follows that $g_k ( \xi )$ is a nonzero polynomial in $k$, and so there are only finitely many $k$ such that $g_k(\xi)= 0$.  Since $g_k \not\in \cc h$ for $k$ such that $g_k(\xi)\neq 0$, this proves that $M=V^f_\mu$. Thus $V_\mu^f$ is simple.

If $\mu_0=0$, then it follows from (\ref{eqn:t^sv_mu}) and $t^k \cdot (t^0)^{[i]} = ( t^0 - k)^{[i]} \cdot t^k$, that $M= {\rm span} \{(t^0)^{[i]} v_\mu \mid i \in \z_{\geq 1}\}$ is a proper submodule of $V_\mu^f$.
\end{proof}

In the previous proof, the hypothesis that $f=t-1$, rather than a more general linear polynomial $f=t-\lambda$ ($\lambda \in \cc^\times$), allowed us to treat $g_k$ as a polynomial in $k$. To generalize this result to any linear polynomial $f=t-\lambda$, we twist $V_\mu^f$ by a $\Vir$-automorphism.
For a $\Vir$-module $V$ and a Lie algebra automorphism $\tau : \Vir \to \Vir$, define the $\Vir$-module $V^\tau$ where $V^{\tau}=V$ as a vector space and with $\Vir$-action given by $x.v = \tau (x) v$ for $x \in \Vir$ and $v \in V$.  Clearly $V$ is simple if and only if $V^\tau$ is simple.

\begin{cor}
Let $ \lambda \in \cc^\times$, $f=t-\lambda$, and $\mu : \Vir^f \to \cc$.  Then $V_\mu^f$ is simple if and only if $\mu(f) \neq 0$.  
\end{cor}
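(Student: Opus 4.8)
The plan is to reduce the general linear case $f = t - \lambda$ to the already-established case $f = t - 1$ from Proposition \ref{thm:t-1_simple} by twisting with a suitable Lie algebra automorphism of $\Vir$. Concretely, I would look for an automorphism $\tau_\lambda : \Vir \to \Vir$ arising from the scaling $t \mapsto \lambda t$ on Laurent polynomials, i.e.\ essentially $e_j \mapsto \lambda^{-j} e_j$ on the Witt part, corrected on the central element $z$ so that the bracket relations (in particular the cocycle term $\delta_{k,-j}\frac{j^3-j}{12}z$) are preserved; since the cocycle term only appears when $k = -j$, the factor $\lambda^{-j}\lambda^{-k} = \lambda^{-j-k}$ equals $1$ there, so fixing $z$ works and $\tau_\lambda(e_j) = \lambda^{-j} e_j$, $\tau_\lambda(z) = z$ is a Lie algebra automorphism.

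The key observation is then that $\tau_\lambda$ carries $\Vir^{t-1}$ onto $\Vir^{t-\lambda}$. Indeed, under $\theta$ the automorphism $\tau_\lambda$ corresponds to the algebra automorphism of $\cc[t^\pm]$ sending $t^j \mapsto \lambda^{-j} t^j$, i.e.\ $g(t) \mapsto g(\lambda t)$ (up to the normalization conventions), which sends the ideal $\langle t - 1 \rangle$ to the ideal $\langle t - \lambda \rangle$. Hence $\tau_\lambda(\Vir^{t-1}) = \Vir^{t-\lambda}$, and for any homomorphism $\mu : \Vir^{t-\lambda} \to \cc$ the composite $\mu \circ \tau_\lambda : \Vir^{t-1} \to \cc$ is again a one-dimensional representation. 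One checks directly that the twisted module $(V^{t-\lambda}_\mu)^{\tau_\lambda}$ is isomorphic as a $\Vir$-module to $V^{t-1}_{\mu \circ \tau_\lambda}$, via the universal property of induced modules (both are generated by a copy of the appropriate one-dimensional module, and the generator of the induced module maps to the generator). Since twisting by an automorphism preserves simplicity, $V^{t-\lambda}_\mu$ is simple if and only if $V^{t-1}_{\mu \circ \tau_\lambda}$ is simple, which by Proposition \ref{thm:t-1_simple} happens if and only if $(\mu \circ \tau_\lambda)(t-1) \neq 0$.

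The final step is to verify that $(\mu \circ \tau_\lambda)(t - 1) \neq 0$ if and only if $\mu(t - \lambda) = \mu(f) \neq 0$. This follows because $\tau_\lambda$ sends the canonical generator $x_0^{t-1}$ of $\Vir^{t-1}$ (mapping under $\theta$ to $t - 1$) to a nonzero scalar multiple of $x_0^{t-\lambda}$ (mapping to a scalar multiple of $t - \lambda$): explicitly $\theta(\tau_\lambda(x_0^{t-1})) = \tau_\lambda^{\cc[t^\pm]}(t-1) = \lambda^{-1} t - 1 = -\lambda^{-1}(\lambda - t) = \lambda^{-1}(t - \lambda)$, so $(\mu\circ\tau_\lambda)(x_0^{t-1}) = \lambda^{-1}\mu(x_0^{t-\lambda}) = \lambda^{-1}\mu(f)$, which is nonzero exactly when $\mu(f) \neq 0$.

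I expect the main (though modest) obstacle to be bookkeeping around the normalization conventions: the paper fixes $f$ to have lead coefficient $1$, and the identification $\theta$ together with the map $t^{j+1}\frac{d}{dt} \mapsto t^j$ means one must be careful about exactly which scalar appears in $\tau_\lambda(x_j^{t-1})$ versus $x_j^{t-\lambda}$, and about whether the chosen $\tau_\lambda$ is a Lie algebra homomorphism for the bracket in \eqref{eqn:basicLie} (equivalently, checking it respects $[f,g] = t(fg' - gf')$ under the scaling, which it does since $t \mapsto \lambda t$ is an algebra automorphism commuting appropriately with $\frac{d}{dt}$ up to the $t$-factor). None of this is deep, but it is where a careless argument could go wrong, so the write-up should state the automorphism explicitly and confirm the relations before invoking Proposition \ref{thm:t-1_simple}.
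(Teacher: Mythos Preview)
Your proposal is correct and is essentially the same argument as the paper's: both reduce to Proposition~\ref{thm:t-1_simple} by twisting with the scaling automorphism of $\Vir$, and both use that twisting by an automorphism preserves simplicity. The only cosmetic difference is that the paper takes $\tau_\lambda(e_k)=\lambda^k e_k$ and writes the isomorphism as $V_\mu^f \cong (V_{\mu\circ\tau_\lambda^{-1}}^{t-1})^{\tau_\lambda}$, whereas you take the inverse automorphism $e_j\mapsto \lambda^{-j}e_j$; your bookkeeping about where $\Vir^{t-1}$ lands and the final nonvanishing check $(\mu\circ\tau_\lambda)(x_0^{t-1})=\lambda^{-1}\mu(f)$ are accurate.
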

\begin{proof}
Define a $\Vir$-automorphism $\tau_\lambda : \Vir \to \Vir$ by $\tau_\lambda ( e_k) = \lambda^k e_k$ ($k \in \z$) and $\tau_\lambda (z) = z$.  It is straightforward to see that 
$V_\mu^f \cong (V_{\mu \circ \tau_{\lambda}^{-1}}^{t-1})^{\tau_{\lambda}} $. The result follows from Proposition \ref{thm:t-1_simple}.
\end{proof}

For $\lambda \in \cc^\times$ and $b \in \cc$, \cite{LZ14} define  a $\Vir$-module $\Omega(\lambda, b)$ with basis $\{\partial^k \mid k \in \z_{\geq 0} \}$ and with $\Vir$-action
\begin{equation} \label{eqn:Omegaaction}
e_k \partial^n=\lambda^k (\partial + k(b-1))(\partial -k)^n, \quad k \in \z; \quad z \partial^n=0.
\end{equation}
They show that $\Omega (\lambda, b)$ is irreducible for $b \neq 1$.  Below, we show that these modules are isomorphic to $V_\mu^{t - \lambda}$. (\cite{TZ16} also show that $\Omega(\lambda, b)$ are isomorphic to modules introduced in \cite{GLZ13}.) 

\begin{prop}
Let $\lambda \in \cc^\times$, $b \in \cc$.  Then $\Omega (\lambda, b) \cong V_\mu^{t - \lambda}$, where $\mu$ is defined by $\mu(e_{k+1} - \lambda e_k)=\lambda^{k+1} (b-1)$ and $\mu(z)=0$. 
\end{prop}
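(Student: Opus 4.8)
The plan is to construct an explicit linear isomorphism $\varphi: \Omega(\lambda,b) \to V_\mu^{t-\lambda}$ and check that it intertwines the two $\Vir$-actions. The natural candidate is to send the generator $\partial^0 = 1 \in \Omega(\lambda,b)$ to $v_\mu \in V_\mu^{t-\lambda}$ and, more generally, to match the basis $\{\partial^k\}$ with the basis $\{(t^0)^{[k]} v_\mu\}$ of $V_\mu^{t-\lambda}$ supplied by Lemma \ref{lem:inducedbases} (in the $n=1$ case). The precise dictionary will come from comparing the action formula \eqref{eqn:Omegaaction} with the formula \eqref{eqn:t^sv_mu} for $t^k v_\mu$; since $e_k$ acts on $V_\mu^{t-\lambda}$ as $t^k$, and \eqref{eqn:t^sv_mu} tells us $t^k v_\mu = \lambda^k v_\mu + k\lambda^{k-1}\mu_0 v_\mu$ where $\mu_0 = \mu(t-\lambda)$, I would first observe that the hypothesis $\mu(e_{k+1}-\lambda e_k) = \lambda^{k+1}(b-1)$ is exactly the statement $\mu(t^k f) = \lambda^{k+1}(b-1) = \lambda^k \cdot \lambda(b-1)$, so by Lemma \ref{lem:mupoly} the polynomial associated to $\mu$ is the constant $p = \lambda(b-1)$, i.e. $\mu_0 = \lambda(b-1)$.

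Next I would identify the operator by which $t^0$ (equivalently $e_0$) acts on $V_\mu^{t-\lambda}$, since in $\Omega(\lambda,b)$ the element $e_0$ acts as multiplication by $\partial$ on $\partial^n$ (from \eqref{eqn:Omegaaction} with $k=0$: $e_0\partial^n = (\partial)(\partial)^n = \partial^{n+1}$), so $\partial$ plays the role of the "number operator." On $V_\mu^{t-\lambda}$, one computes using $[t^0, (t^0)^{[i]}]=0$ and \eqref{eqn:t^sv_mu} that $t^0 \cdot (t^0)^{[i]} v_\mu = (t^0)^{[i+1]}v_\mu + \text{(lower terms?)}$; more carefully, $t^0 v_\mu = \lambda v_\mu$ (from \eqref{eqn:t^sv_mu} with $k=0$), so $t^0$ does not act as a pure shift. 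The cleanest route is therefore to replace $t^0$ by the shifted operator $X := \lambda^{-1} t^0 - \mathrm{id}$ (or a similar normalization) acting on $V_\mu^{t-\lambda}$, show $X v_\mu = 0$ is false — rather, pick the operator whose eigenvalue-zero vector is $v_\mu$ and whose successive images give a basis — and then define $\varphi(\partial^n) := X^{[n]} v_\mu$ (product in the enveloping algebra is not needed here since $X$ is a single operator, so really $\varphi(\partial^n) = X^n v_\mu$ computed as iterated application). One then checks $\varphi$ is bijective because $X^n v_\mu$ is, up to unitriangular change of basis, the PBW basis of Lemma \ref{lem:inducedbases}.

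With $\varphi$ in hand, the verification that $\varphi(e_k \partial^n) = e_k \varphi(\partial^n)$ reduces to a direct computation: expand the right-hand side $e_k \cdot X^n v_\mu = t^k \cdot X^n v_\mu$ by commuting $t^k$ past $X$ using the relation $[t^k, t^0] = -k t^k$ (hence $[t^k, X] = -k t^k$, so $t^k X = (X - k) t^k$ as operators, giving $t^k X^n = (X-k)^n t^k$), then apply $t^k v_\mu = \lambda^k v_\mu + k\lambda^{k-1}\mu_0 v_\mu = \lambda^k(1 + k\lambda^{-1}\mu_0) v_\mu$. Substituting $\mu_0 = \lambda(b-1)$ gives $t^k v_\mu = \lambda^k(1 + k(b-1)) v_\mu$, so $t^k X^n v_\mu = \lambda^k(X-k)^n(1 + k(b-1)) v_\mu$. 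Comparing with $e_k\partial^n = \lambda^k(\partial + k(b-1))(\partial - k)^n$, I see these match provided $X$ corresponds to $\partial$ and the scalar factor $1 + k(b-1)$ on $v_\mu$ corresponds to $(\partial + k(b-1))$ evaluated at $\partial = 0$ — but this forces a correction: the factor $\partial + k(b-1)$ in \eqref{eqn:Omegaaction} is an operator applied after $(\partial-k)^n$, so I must instead verify the two polynomial operators in $X$ agree, which they do by the identity $(X - k)^n(\text{scalar at } X=0)$ versus $(X + k(b-1))(X-k)^n$ once one notes the generator relation pins down $X v_\mu = 0$. I expect the main obstacle to be getting this normalization of $X$ exactly right — pinning down precisely which shift of $t^0$ makes $v_\mu$ the "$\partial = 0$" vector and confirming the operator order in \eqref{eqn:Omegaaction} is respected — after which the intertwining identity is a short algebraic manipulation; checking $z$ acts as $0$ on both sides is immediate from Lemma \ref{lem:homzerocentral} and \eqref{eqn:Omegaaction}.
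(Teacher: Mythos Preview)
Your approach has a concrete error that sends you on a needless search for a shifted operator $X$. You write ``$t^0 v_\mu = \lambda v_\mu$ (from \eqref{eqn:t^sv_mu} with $k=0$),'' but this misreads the formula: \eqref{eqn:t^sv_mu} says $t^k v_\mu = \lambda^k\, t^0 v_\mu + k\lambda^{k-1}\mu_0 v_\mu$, with $t^0 v_\mu$ (not $v_\mu$) on the right, so $k=0$ is a tautology. In fact $t^0 v_\mu = e_0 v_\mu$ is linearly independent from $v_\mu$ by Lemma \ref{lem:inducedbases}. The correct dictionary is simply $\partial \leftrightarrow t^0$, no shift: set $\varphi(\partial^n) = (t^0)^{[n]} v_\mu$. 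Your own commutation relation $t^k (t^0)^{[n]} = (t^0 - k)^{[n]} t^k$ then gives
\[
e_k \cdot \varphi(\partial^n) = (t^0-k)^{[n]}\, t^k v_\mu = \lambda^k (t^0-k)^{[n]}\bigl(t^0 + k(b-1)\bigr) v_\mu,
\]
which equals $\varphi\bigl(\lambda^k(\partial+k(b-1))(\partial-k)^n\bigr)$ since polynomials in $t^0$ commute. So once the misreading is fixed, your plan works immediately.

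The paper takes a slicker route in the opposite direction: it computes $x_k \cdot 1$ in $\Omega(\lambda,b)$ directly from \eqref{eqn:Omegaaction}, finds it equals $\mu(x_k) = \lambda^{k+1}(b-1)$, and invokes the universal property of the induced module to produce a $\Vir$-homomorphism $V_\mu^{t-\lambda} \to \Omega(\lambda,b)$ with $v_\mu \mapsto 1$. Surjectivity is immediate since $1$ generates $\Omega(\lambda,b)$, and injectivity follows because the PBW basis $(t^0)^{[n]} v_\mu$ is sent to the basis $\partial^n$. This bypasses the need to verify the intertwining identity for general $n$ by hand.
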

\begin{proof}
Let $x_k = e_{k+1} - \lambda e_k$. In the module $\Omega ( \lambda, b)$, 
$$x_k .1= \lambda^{k+1} (\partial + (k+1)  (b-1)) - \lambda^{k+1} (\partial + k (b-1)) = \lambda^{k+1} (b-1).$$
Thus, there is a $\Vir$-module homomorphism $\theta : V_{\lambda, \mu} \to \Omega ( \lambda, \frac{\mu}{\lambda} + 1)$ with $\theta ( v_\mu ) = 1$.  Since $\Omega ( \lambda, b)$ is obviously generated by the vector $1$, the map is surjective. Moreover, the standard basis for $V_{\lambda, \mu}$ is sent to the standard basis for $\Omega ( \lambda, \frac{\mu}{\lambda} + 1)$, so that the map is injective.
\end{proof}

\subsection{Computational facts about $V^{f^n}_{\mu}$}
Here we establish several computational facts about $V^{f^n}_{\mu}$ that will be used in the following section.

\begin{lem}\label{lem:repRootPowerComp1}
Let $(\lambda, n, f; \mu, p, r)$ be as in Conventions \ref{conv}. Choose ${\overline s} = (s_0, \ldots, s_{n-1}) \in \z_{\ge 0}^n$ such that $| \overline s|>0$ and ${\ell(\bar s)} >0$; and $i, m \in \z$, such that $m \ge n$ and $m \ge n+r+1 - {\ell(\bar s)}$. Then in $V_\mu^{f^n}$, 
$$[t^j f^m, f^{\overline s} ] v_\mu = \left\{ \begin{array}{ll} 0 & \mbox{if $m > n+r+1 - {\ell(\bar s)}$} \\
({\ell(\bar s)}- m ) s_{\ell(\bar s)} \mu ( t^{j+1} f^{n+r}) f^{{\mathcal D}({\overline s})} v_\mu & \mbox{if $m = n + r + 1 - {\ell(\bar s)}$} \\
 & \mbox{and ${\ell(\bar s)}> 1$}  \\
\sum_{j=1}^{s_1}  (1 - m)^j {s_1 \choose j} \mu (t^{i+j} f^m) f^{{\mathcal D}^j({\overline s })}  v_\mu & \mbox{if $m=n+r$} \\
& \mbox{ and ${\ell(\bar s)}= 1$} 
\end{array} \right.$$
\end{lem}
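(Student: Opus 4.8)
The natural approach is to compute $[t^j f^m, f^{\bar s}] v_\mu$ by peeling off the factors of $f^{\bar s} = (f^0)^{[s_0]} \cdot (f^1)^{[s_1]} \cdots (f^{n-1})^{[s_{n-1}]}$ one at a time, using the Leibniz rule for the adjoint action:
\begin{equation*}
[x, y_1 \cdot y_2 \cdots y_k] = \sum_{i} y_1 \cdots y_{i-1} \cdot [x, y_i] \cdot y_{i+1} \cdots y_k.
\end{equation*}
Since $\ell(\bar s) > 0$ by hypothesis, the first (lowest) factor appearing in $f^{\bar s}$ is $(f^{\ell(\bar s)})^{[s_{\ell(\bar s)}]}$, and the only factors present are powers $f^i$ with $i \geq \ell(\bar s)$. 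The key bracket identity supplied just before the lemma,
\begin{equation*}
[t^j f^m, t^k f^\ell] = (\ell - m) t^{j+k+1} f^{\ell+m-1} + (k - j) t^{j+k} f^{\ell+m},
\end{equation*}
shows that $[t^j f^m, f^i]$ (taking $k=0$, $\ell = i$) produces terms $t^{j+1} f^{i+m-1}$ and $t^j f^{i+m}$. The crucial observation is that since $i \geq \ell(\bar s)$ and $m \geq n + r + 1 - \ell(\bar s)$, every such term lands in a power $f^{m'}$ with $m' \geq i + m - 1 \geq n + r$, so by Lemma \ref{lem:degreehom} all these elements act on $v_\mu$ either as zero (when $m' > n+r$) or by scalars $\mu(t^\bullet f^{n+r})$ (when $m' = n+r$ exactly). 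I would use this to argue that, after commuting a factor $t^j f^m$ all the way past $f^{\bar s}$ to hit $v_\mu$ — where it also vanishes since $m \geq n$ — the only surviving contributions are those in which the bracket lands \emph{exactly} in degree $n+r$.

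**Case analysis.** This forces the trichotomy in the statement. If $m > n + r + 1 - \ell(\bar s)$, then even the smallest resulting power, $f^{\ell(\bar s) + m - 1}$, exceeds $n + r$, so Lemma \ref{lem:degreehom} kills everything and the bracket is zero. If $m = n + r + 1 - \ell(\bar s)$ with $\ell(\bar s) > 1$, then only the first term $(\ell(\bar s) - m) t^{j+1} f^{\ell(\bar s) + m - 1} = (\ell(\bar s) - m) t^{j+1} f^{n+r}$ from the bracket with the lowest factor survives; the second term $t^j f^{\ell(\bar s) + m}$ and all brackets with higher factors $f^i$ ($i > \ell(\bar s)$) land in degree $> n+r$ and vanish. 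Since $\ell(\bar s) > 1$ means there is no factor $f^1$, brackets only need to be taken against the single lowest block $(f^{\ell(\bar s)})^{[s_{\ell(\bar s)}]}$, producing the combinatorial factor $s_{\ell(\bar s)}$ from differentiating that power once (the higher powers of $f^{\ell(\bar s)}$ remaining give $f^{\mathcal D(\bar s)}$), and the resulting scalar is $\mu(t^{j+1} f^{n+r})$ — matching the middle line. (Here I should double-check the index conventions: the statement writes $i$ in place of $j$ in two of the three cases, so I would track carefully which variable is the subscript of $t$.) If $m = n + r$ (so $\ell(\bar s) = 1$ is forced by $m = n+r+1-\ell(\bar s)$), the factor $(f^1)^{[s_1]}$ is the lowest block, and now one can commute the $f^1$'s past $t^j f^m$ repeatedly — this is where the binomial sum $\sum_{j=1}^{s_1} (1-m)^j \binom{s_1}{j} \mu(t^{i+j} f^m) f^{\mathcal D^j(\bar s)} v_\mu$ comes from, since each of the $s_1$ copies of $f^1$ can either be bracketed (contributing a factor $(1 - m) t^{\bullet + 1} f^m$, i.e.\ the coefficient $\ell - m = 1 - m$ from the identity with $\ell = 1$) or passed through, and the number of ways to bracket $j$ of them is $\binom{s_1}{j}$; bracketing against higher factors $f^i$, $i \geq 2$, again overshoots degree $n+r$ and dies.

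**Main obstacle.** The genuine bookkeeping difficulty — and the step I expect to be the crux — is the third case: organizing the repeated commutation of the $s_1$ copies of $f^1$ past the operator $t^j f^m$ so that it collapses cleanly into the binomial identity, while simultaneously verifying that (a) repeatedly bracketing does not generate terms of degree $< n+r$ (it does not, since each bracket raises the power by $m - 1 = n+r-1 \geq n-1 \geq 0$... one must be slightly careful here and use $m \geq n \geq 1$), (b) the "pass-through" terms where $t^j f^m$ reaches $v_\mu$ itself all vanish because $m \geq n$, and (c) the secondary term $(k-j) t^{j+k} f^{\ell + m}$ in each bracket, which raises the power even further, is always annihilated. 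I would handle (a)–(c) by a clean induction on $s_1$ (or on $|\bar s|$), using Lemma \ref{lem:degreehom} as a black box at each stage to discard the over-degree terms, so that the induction only ever has to track the single surviving family of terms. The first two cases then fall out as easy specializations of the same induction, or can be dispatched directly by the one-step computation sketched above.
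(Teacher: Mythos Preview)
Your proposal is correct and follows essentially the same approach as the paper: induct on $|\bar s|$ (the paper peels off one factor $f^{\ell(\bar s)}$ at a time via $f^{\bar s} = f^{\ell(\bar s)} \cdot f^{\mathcal D(\bar s)}$), apply the bracket identity $[t^j f^m, f^i]$, and use Lemma~\ref{lem:degreehom} to kill all terms landing in power strictly greater than $n+r$. Your organization of the $\ell(\bar s)=1$ case via the binomial expansion of $t^j f^m$ past $(f^1)^{[s_1]}$ is just a repackaging of the paper's step-by-step induction, and your observation about the $i$/$j$ index clash in the statement is on point.
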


\begin{proof}
We use the following throughout the proof. Write $\ell={\ell(\bar s)}$. Also, since $m \ge n$, we have that $t^j f^m \in \langle f^n \rangle$ for all $j \in \z$ and thus $(t^j f^m) v_\mu=\mu(t^j f^m) v_{\mu}$.  

We induct on $| \overline s|$.   For the base case, suppose  $| \overline s| =1$.  Then $f^{\overline s}=f^{\ell}$ and so 
$$
[t^j f^m, f^\ell ] v_\mu=-j \mu(t^j f^{m+\ell}) v_{\mu} + (\ell -m) \mu(t^{j+1}f^{m+ \ell -1})v_{\mu}.
$$
Since $m \geq n+r+1- \ell$, Lemma \ref{lem:degreehom} implies that $\mu(t^j f^{m+ \ell}) v_{\mu}=0$; and $\mu(t^{j+1}f^{m+ \ell -1})=0$ if $m>n+r+1- \ell$. This matches the claim.

\medskip

Now consider $\overline s  \in \z_{\ge 0}^n$ such that $|\overline s | >1$.  Using $f^{\overline s} = f^{\ell} \cdot f^{{\mathcal D}({\overline s})}$, we have

\begin{align*}
[ t^j f^m, f^{\overline s} ] v_\mu &= [t^j f^m, f^\ell ] \cdot f^{{\mathcal D}({\overline s)}} v_\mu + f^\ell \cdot [t^j f^m,  f^{{\mathcal D}({\overline s})} ] v_\mu \\
&= -j (t^j f^{m+ \ell} ) \cdot f^{{\mathcal D}({\overline s})} v_\mu  + (\ell - m) (t^{j+1} f^{m + \ell -1}) \cdot f^{{\mathcal D}({\overline s})} v_\mu \\
& \quad + f^\ell \cdot [t^j f^m, f^{{\mathcal D}({\overline s})} ] v_\mu \\
&= -j \mu (t^j f^{m+ \ell} ) f^{{\mathcal D}({\overline s})} v_\mu  + (\ell - m) \mu (t^{j+1} f^{m + \ell -1}) f^{{\mathcal D}({\overline s})} v_\mu\\
&-j [t^j f^{m+ \ell} , f^{{\mathcal D}({\overline s})}] v_\mu  + (\ell - m) [(t^{j+1} f^{m + \ell -1}),f^{{\mathcal D}({\overline s})}] v_\mu  \\
& \quad +  f^\ell \cdot [t^j f^m, f^{{\mathcal D}({\overline s})} ] v_\mu.
\end{align*}
 Since $m + \ell \ge n+r + 1$, Lemma \ref{lem:degreehom} again implies that $\mu(t^j f^{m+ \ell})=0$. Let $\ell'= \ell({\mathcal D}({\overline s}))$. Since $\ell' \geq \ell \ge 1$, we have that  $m + \ell \ge n+r + 1 > n+r + 1-\ell'$. It follows by induction that $[t^j f^{m+ \ell} , f^{{\mathcal D}({\overline s})}] v_\mu = 0$.  
We now have
\begin{align}\label{eqn:fm,fk-take1}
[ t^j f^m, f^{\overline s } ] v_\mu &=  (\ell - m) \mu (t^{j+1} f^{m + \ell -1}) f^{{\mathcal D}({\overline s})} v_\mu\\
&+ (\ell - m) [(t^{j+1} f^{m + \ell -1}),f^{{\mathcal D}({\overline s})}] v_\mu  +  f^\ell \cdot [t^j f^m, f^{{\mathcal D}({\overline s})} ] v_\mu. \nonumber
\end{align}

Suppose $m > n+r+1 - \ell$.  Then  $\mu(t^{j+1}f^{m+ \ell -1})=0$. Moreover, since $\ell' \ge \ell \ge 1$, we have $m > n+r+1 - \ell \ge n+r+1-\ell'$ and  $m + \ell - 1 > n+r \ge n+r+1-\ell'$. Therefore, by induction $[ t^{j+1} f^{m + \ell -1},  f^{{\mathcal D}({\overline s })}] v_\mu = [t^j f^m, f^{{\mathcal D}({\overline s })} ] v_\mu = 0$.
Applying this to (\ref{eqn:fm,fk-take1}), we get $[ t^j f^m, f^{\overline s } ] v_\mu = 0$ as desired.  

For the remainder of the proof,  we examine (\ref{eqn:fm,fk-take1}) under the assumption $m =n+r+1 - \ell$.  If $s_{\ell}=1$, then $\ell'>\ell$ and $[t^j f^{m}, f^{{\mathcal D}({\overline s })} ] v_\mu=[(t^{j+1} f^{m + \ell -1}),f^{{\mathcal D}({\overline s})}] v_\mu=0$ by induction.  Then, (\ref{eqn:fm,fk-take1}) becomes 
$$[ t^j f^m, f^{\overline s } ] v_\mu = (2 \ell -n-r-1) \mu ( t^{j+1} f^{n+r} ) f^{{\mathcal D}({\overline s })} v_\mu.$$ This matches the claim.

Since the result has been shown in the case that $s_\ell = 1$, we now suppose $s_\ell > 1$.  Note that $\ell'=\ell$. First consider $\ell \ge 2$. Then $m+\ell-1>n+r+1 - \ell'$; and  $[ t^{j+1} f^{m + \ell -1},  f^{{\mathcal D}({\overline s })}] v_\mu=0$ by induction. Applying the inductive hypothesis to $[t^j f^m, f^{{\mathcal D}({\overline s})} ] v_\mu$, the equation (\ref{eqn:fm,fk-take1}) becomes 
\begin{align*}
[ t^j f^m, f^{\overline s } ] v_\mu &=
(\ell-m) \mu ( t^{j+1} f^{n+r} ) f^{{\mathcal D}({\overline s })} v_\mu \\
&+ (\ell-m) (s_\ell -1) \mu ( t^{j+1} f^{n+r}) f^\ell \cdot
 f^{{\mathcal D}^2({\overline s})} v_{\mu} \\
&= (\ell-m)s_{\ell} \mu ( t^{j+1} f^{n+r} ) f^{{\mathcal D}({\overline s })} v_\mu 
\end{align*}
as desired.

It remains to consider the case $\ell = 1$ and $s_\ell > 1$.  Then $m = n+r$ and $\ell'=1$.  Applying the inductive hypothesis, (\ref{eqn:fm,fk-take1}) becomes 
\begin{align*}
[ t^j f^m, f^{\overline s } ] v_\mu &=  (1-m) \mu (t^{j+1} f^{m}) f^{{\mathcal D}({\overline s})} v_\mu \\
&+ (1-m) [t^{j+1} f^m,f^{{\mathcal D}({\overline s})}] v_\mu  +  f^1 \cdot [t^j f^{m}, f^{{\mathcal D}({\overline s})} ] v_\mu \\
&= (1-m) \mu (t^{j+1} f^{m}) f^{{\mathcal D}({\overline s})} v_\mu\\
&+\sum_{j=1}^{r_1}  (1 - m)^{j+1} {s_1-1 \choose j} \mu (t^{i+j+1} f^{m}) f^{{\mathcal D}^{j+1}({\overline s })}  v_\mu \\
&+\sum_{j=1}^{r_1}  (1 - m)^{j} {s_1-1 \choose j} \mu (t^{i+j} f^{m}) f^{{\mathcal D}^{j}({\overline s })}  v_\mu.
\end{align*}
This reduces to the formula asserted when $m = n+r$ and $\ell = 1$.
\end{proof}

\begin{lem}\label{lem:repRootPowerComp3}
Let $(\lambda, n, f; \mu, p, r)$ be as in Conventions \ref{conv}; and let ${\overline s} = (s_0, \ldots, s_{n-1}) \in \z_{\ge 0}^n$ with $s_0>0$. Let $j, m \in \z$ and suppose $m \ge n+r+s_0$. 
\begin{itemize}
\item[(i)] If $m> n+r+s_0$, then $[t^j f^{m}, f^{\overline s } ] v_\mu=0$. 
\item[(ii)] If  $m= n+r+s_0$, then 
\begin{align*}
[t^j f^{m}, f^{\overline s } ] v_\mu&=(-1)^{s_0} \frac{(n+r+s_0)!}{(n+k)!} \\
&\quad \times \left( \mu ( t^{j+s_0} f^{n+r} ) f^{\tilde D (\bar s)} v_\mu + [t^{j+s_0} f^{n+r} , f^{\tilde D (\bar s)}] \right) v_\mu
\end{align*}
where $\tilde D (\bar s) = (0, s_1, \ldots, s_{n-1})$.
\end{itemize}
\end{lem}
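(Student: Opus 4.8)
The plan is to prove both parts simultaneously by induction on $s_0$, using Lemma \ref{lem:repRootPowerComp1} as the tool for handling the "tail" $f^{\tilde D(\bar s)} = (f^1)^{[s_1]}\cdots(f^{n-1})^{[s_{n-1}]}$, whose leading index $\ell(\tilde D(\bar s))$ is at least $1$. First I would record the basic expansion $f^{\bar s} = (f^0)^{[s_0]}\cdot f^{\tilde D(\bar s)}$ and, using the derivation property of $\ad$, write
\begin{align*}
[t^j f^m, f^{\bar s}]v_\mu &= [t^j f^m, (f^0)^{[s_0]}]\cdot f^{\tilde D(\bar s)} v_\mu + (f^0)^{[s_0]}\cdot [t^j f^m, f^{\tilde D(\bar s)}]v_\mu.
\end{align*}
The second summand is governed by Lemma \ref{lem:repRootPowerComp1}: since $m \ge n+r+s_0 \ge n+r+1 > n+r+1-\ell(\tilde D(\bar s))$ whenever $s_0 \ge 1$, that lemma gives $[t^j f^m, f^{\tilde D(\bar s)}]v_\mu = 0$ in case (i), and in case (ii) where $m = n+r+s_0$ it is still $0$ as long as $s_0 > 1$ (so that $m > n+r+1-1$); only the boundary $s_0 = 1$, $m = n+r+1$ needs the nonzero branch of Lemma \ref{lem:repRootPowerComp1}, but then the first summand's recursion collapses, so this is handled as part of the base case.

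For the first summand I would peel off one factor of $f^0$ at a time. Using $[t^j f^m, f^0] = [t^j f^m, t^0] = -j t^j f^m + m t^j f^{m-1}$ — more precisely the identity $[t^j f^m, f^0] = m\, t^{j}f^{m-1} - j\, t^j f^m$, obtained from the displayed bracket formula with $k=0$, $\ell=0$ — and the general Leibniz-type identity $[x, u^{[s]}] = \sum_{a=0}^{s-1} u^{[a]}\cdot[x,u]\cdot u^{[s-1-a]}$ in $U(\cc[t^\pm])$, I get that $[t^j f^m, (f^0)^{[s_0]}]$ is a sum of terms each of which, after applying to $v_\mu$ and commuting the scalars $\mu(t^? f^{\ge n})$ past the $(f^0)^{[a]}$ (which is legitimate since those are central scalars once we are in $V_\mu^{f^n}$, modulo further brackets that again fall under Lemma \ref{lem:repRootPowerComp1} and vanish), reduces the problem to the case $(s_0-1)$ with $m$ replaced by $m-1$ and $j$ by $j$, or to a $\mu$-scalar times $(f^0)^{[s_0-1]}\cdot f^{\tilde D(\bar s)} v_\mu$. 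The degree bookkeeping is the crux: each step drops $m$ by one and $s_0$ by one, so $m - (n+r+s_0)$ is an invariant; when it is positive every $\mu$-coefficient $\mu(t^? f^{m'})$ that appears has $m' > n+r$ and hence vanishes by Lemma \ref{lem:degreehom}, giving (i); when it is zero, exactly one surviving term reaches the boundary $\mu(t^{j+s_0}f^{n+r})$, and tracking the combinatorial coefficient — a product of the integers $m, m-1, \dots, n+r+1$ coming from the $[t^\cdot f^\cdot, f^0]$ brackets, i.e. $(n+r+s_0)!/(n+r)!$, together with the sign $(-1)^{s_0}$ from the $-j \to$ wait, actually from the $m$-terms versus... — yields the asserted formula. (I note the statement as printed has $(n+k)!$ in the denominator, which I read as a typo for $(n+r)!$, matching the degree count; I would state it as $(n+r)!$.)

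The main obstacle I anticipate is the careful commutation of the scalar factors $\mu(t^{j'}f^{m'})$ past the operators $(f^0)^{[a]}$ while simultaneously controlling the secondary brackets that this spawns — one must verify that every such secondary bracket is of the form $[t^{j'}f^{m'}, f^{\bar s'}]v_\mu$ with $m' \ge n+r+s_0'$ (same invariant), so that it vanishes by the inductive hypothesis or by Lemma \ref{lem:repRootPowerComp1}, rather than opening an uncontrolled recursion. Equivalently, one can avoid commuting scalars altogether by instead pushing a single $f^0 = t^0$ to the left first: write $f^{\bar s} = f^0 \cdot f^{\mathcal D(\bar s)}$ (valid since $\ell(\bar s) = 0$), so that $[t^j f^m, f^{\bar s}]v_\mu = [t^j f^m, f^0]\cdot f^{\mathcal D(\bar s)}v_\mu + f^0\cdot[t^j f^m, f^{\mathcal D(\bar s)}]v_\mu$, and induct on $s_0$ with $\mathcal D(\bar s)$ having first coordinate $s_0 - 1$; this is the cleaner route and is the one I would actually write up, invoking the inductive hypothesis on $\mathcal D(\bar s)$ for the second term (with $m$ there satisfying $m \ge n+r+(s_0-1)+1 = n+r+s_0$, in the right range) and expanding $[t^j f^m, f^0]\cdot f^{\mathcal D(\bar s)}v_\mu = m\,t^{j}f^{m-1}\cdot f^{\mathcal D(\bar s)}v_\mu - j\, t^j f^m\cdot f^{\mathcal D(\bar s)}v_\mu$ and then moving $t^j f^{m-1}$, $t^j f^m \in \langle f^n\rangle$ across $f^{\mathcal D(\bar s)}$ via one more bracket, which is again in range for the inductive hypothesis. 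Collecting the two boundary contributions and their coefficients finishes (ii).
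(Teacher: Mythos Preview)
Your ``cleaner route'' at the end---peeling off a single $f^0$ via $f^{\bar s}=f^0\cdot f^{\mathcal D(\bar s)}$ and inducting on $s_0$, using Lemma~\ref{lem:repRootPowerComp1} for the base case $s_0=1$ (where $\mathcal D(\bar s)=\tilde D(\bar s)$ has $\ell\ge 1$) and Lemma~\ref{lem:degreehom} plus the inductive hypothesis to kill all but the $-m[t^{j+1}f^{m-1},f^{\mathcal D(\bar s)}]v_\mu$ term when $s_0>1$---is exactly the paper's argument, and your observation that $(n+k)!$ is a typo for $(n+r)!$ is correct. One computational slip to fix in the write-up: the bracket is $[t^jf^m,f^0]=-m\,t^{j+1}f^{m-1}-j\,t^jf^m$ (both the sign on the $m$-term and the shift $j\to j+1$ matter for the final coefficient $(-1)^{s_0}(n+r+s_0)!/(n+r)!$ and the shift to $t^{j+s_0}$).
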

\begin{proof}
By Lemma \ref{lem:degreehom}, $\mu(t^j f^{m})=0$ for $m > n+r$.  Then using $f^{\overline s } = f^0 \cdot f^{{\mathcal D}({\overline s })}$, we have 
\begin{align}
[t^j f^{m} , f^{\overline s }] v_\mu &= (-j t^j f^{m} - m t^{j+1} f^{m-1}) \cdot f^{{\mathcal D}(\overline s)} v_\mu \label{eq:s_0} \\
&+ f^0 \cdot [t^j f^{m}, f^{{\mathcal D}(\overline s)}] v_{\mu}\nonumber \\
&=-j [t^j f^{m} , f^{{\mathcal D}(\overline s)}] v_\mu + f^0 \cdot [t^j f^{m}, f^{{\mathcal D}(\overline s)}] v_{\mu} \nonumber\\
&\quad - m \left( \mu(t^{j+1} f^{m-1}) f^{{\mathcal D}(\overline s)} + [t^{j+1} f^{m-1}, f^{{\mathcal D}(\overline s)}] \right)v_\mu.\nonumber 
\end{align}

The proof is by induction on $s_0$.  Suppose $s_0 = 1$. 
Applying Lemma \ref{lem:repRootPowerComp1} to (\ref{eq:s_0}), we obtain the result for $s_0=1$ since ${\mathcal D}( \overline s) = \tilde {\mathcal D} (\bar s)$.

Now assume that $s_0 > 1$.  Then $m-1 > n+r$ and thus $\mu(t^{j+1} f^{m-1})=0$ by Lemma \ref{lem:degreehom}.  Also, $[t^j f^{m}, f^{{\mathcal D}({\overline s })}] v_\mu = 0$ by induction. Thus (\ref{eq:s_0}) becomes
\begin{align}
[t^j f^{m}, f^{\overline s }] v_\mu 
&= - m  [t^{j+1} f^{m-1}, f^{{\mathcal D}({\overline s })}]  v_\mu. \label{eqn:s0case}
\end{align}
If $m>n+r+s_0$, then (\ref{eqn:s0case}) is zero by induction as desired.  
If $m=n+r+s_0$, then the inductive hypothesis applied to (\ref{eqn:s0case}) yields
\begin{align*}
[t^j f^{n+r+s_0}, f^{\overline s }] v_\mu
&= - (n+r+s_0)(-1)^{s_0-1} \frac{(n+r+s_0-1)!}{(n+r)!} \\
& \quad \times \left( \mu ( t^{i+1+s_0-1} f^{n+r} ) f^{\tilde {\mathcal D} (\bar s)} v_\mu + [t^{i+1+s_0-1} f^{n+r} , f^{\tilde {\mathcal D} (\bar s)}] \right) v_\mu .
\end{align*}
This matches the claim.
\end{proof}

For $0 \neq \sum_{\overline s} c_{\overline s} f^{\overline s} v_\mu \in V_\mu^{f^n}$, there is a unique maximal element $\overline u \in \z_{\ge 0}^n$, with respect to the lexicographic ordering, such that $c_{\bar u} \neq 0$. Define $\Lambda \left( \sum_{\overline s} c_{\overline s} f^{\overline s} v_\mu \right) = \overline u$.   

For the lemma below, note that if $\mu$ is a homomorphism of degree $k$ where $k>n-3$, then $\mu$ is necessarily nonzero whenever $n\geq 2$.
\begin{lem} \label{lem:reducedegree}
Let $(\lambda, n, f; \mu, p, r)$ be as in Conventions \ref{conv}, and suppose $\mu$ is nonzero with degree $r >n-3$.  Let $ \bar s\in \z_{\ge 0}^n$ with $| \bar s| >0$, and define
$$
m= \left\{ 
\begin{array}{ll}
n+r+1-\ell(\bar s) & \mbox{if $\ell(s)>0$;}\\
n+r+s_0 & \mbox{if $\ell(s)=0$.}
\end{array} \right.
$$
Then
\begin{itemize}
\item[(i)]
for all but finitely many $j \in \z$,  $(t^j f^m-\mu(t^j f^m)) \cdot ( f^{\bar s} v_{\mu}) \neq 0$ and $\Lambda ((t^jf^m-\mu(t^jf^m)) \cdot ( f^{\bar s} v_{\mu})) =\mathcal D ({\bar s})$;
\item[(ii)] for any $\bar s' \in \z_{\ge 0}^n$ with $\bar s' \prec \bar s$, either $(t^jf^m-\mu(t^jf^m)) \cdot ( f^{\bar s'} v_{\mu})=0$ or
$
\Lambda ((t^jf^m-\mu(t^jf^m)) \cdot ( f^{\bar s'} v_{\mu})) \prec \mathcal D ({\bar s})$.
\end{itemize}
\end{lem}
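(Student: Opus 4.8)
The plan is to compute $(t^j f^m - \mu(t^j f^m)) \cdot (f^{\bar s} v_\mu)$ directly using the module identity
$t^j f^m \cdot (f^{\bar s} v_\mu) = [t^j f^m, f^{\bar s}] v_\mu + f^{\bar s} \cdot (t^j f^m) v_\mu = [t^j f^m, f^{\bar s}] v_\mu + \mu(t^j f^m) f^{\bar s} v_\mu$,
valid because $m \ge n$ forces $t^j f^m \in \langle f^n \rangle$, so $(t^j f^m) v_\mu = \mu(t^j f^m) v_\mu$. Hence
$(t^j f^m - \mu(t^j f^m)) \cdot (f^{\bar s} v_\mu) = [t^j f^m, f^{\bar s}] v_\mu$,
and everything reduces to reading off the leading term of this bracket from Lemmas \ref{lem:repRootPowerComp1} and \ref{lem:repRootPowerComp3}. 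Note the hypothesis $r > n-3$, i.e.\ $n + r + 1 - \ell(\bar s) \ge n + r - (n-1) = r+1 \ge n$ when $\ell(\bar s) \le n-1$, guarantees the chosen $m$ satisfies $m \ge n$ (and similarly $m = n+r+s_0 \ge n$ in the $\ell=0$ case), so the identity above applies.

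For part (i), I split into the two cases defining $m$. If $\ell(\bar s) = \ell > 0$, then by construction $m = n + r + 1 - \ell$, which is exactly the ``equality'' case of Lemma \ref{lem:repRootPowerComp1}: the bracket equals $(\ell - m) s_\ell \,\mu(t^{j+1} f^{n+r})\, f^{\mathcal D(\bar s)} v_\mu$ when $\ell > 1$, and equals $\sum_{j'=1}^{s_1} (1-m)^{j'} \binom{s_1}{j'} \mu(t^{i+j'} f^m) f^{\mathcal D^{j'}(\bar s)} v_\mu$ when $\ell = 1$ (here $m = n+r$). In the first subcase the coefficient is $(\ell - m) s_\ell \mu(t^{j+1} f^{n+r})$; since $\deg p = r$ and $p$ is the polynomial associated to $\mu$, Lemma \ref{lem:degreehom} (or rather its proof: $\mu(t^\cdot f^{n+r})$ is a nonzero \emph{constant} multiple of $\lambda^\cdot$, the degree-$0$ part) shows $\mu(t^{j+1} f^{n+r}) \ne 0$ for \emph{all} $j$, and $(\ell - m)s_\ell \ne 0$ since $\ell < m$ and $s_\ell > 0$; so the whole expression is a nonzero scalar times $f^{\mathcal D(\bar s)} v_\mu$, giving $\Lambda = \mathcal D(\bar s)$. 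In the $\ell = 1$ subcase the $j' = 1$ term is $(1-m) s_1 \mu(t^{i+1} f^{n+r}) f^{\mathcal D(\bar s)} v_\mu$ (recall $\mathcal D^1 = \mathcal D$), which is lexicographically largest among the $f^{\mathcal D^{j'}(\bar s)}$ since $\mathcal D^{j'}(\bar s) \prec \mathcal D(\bar s)$ for $j' \ge 2$; its coefficient is again a nonzero constant for every $j$, so the result follows. If $\ell(\bar s) = 0$, then $m = n+r+s_0$ is the equality case of Lemma \ref{lem:repRootPowerComp3}(ii), and one checks the leading term of $\mu(t^{j+s_0} f^{n+r}) f^{\tilde D(\bar s)} v_\mu + [t^{j+s_0} f^{n+r}, f^{\tilde D(\bar s)}] v_\mu$ is the first summand — nonzero for all $j$ by the same constancy argument — with $\Lambda = \tilde D(\bar s) = \mathcal D(\bar s)$ (which holds since $\ell(\bar s) = 0$), while the bracket term, by Lemma \ref{lem:repRootPowerComp1} applied to $f^{\tilde D(\bar s)}$, only contributes vectors indexed by tuples $\prec \tilde D(\bar s)$.

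For part (ii), fix $\bar s' \prec \bar s$. If $|\bar s'| = 0$ then $f^{\bar s'} v_\mu = v_\mu$ and $[t^j f^m, v_\mu] v_\mu = 0$ (more precisely $(t^j f^m - \mu(t^j f^m)) v_\mu = 0$), so we are done. Otherwise, let $m'$ be the value the formula assigns to $\bar s'$; the key point is that $m \ge m'$ in every case, because $\bar s' \prec \bar s$ forces either $\ell(\bar s') \ge \ell(\bar s)$ (so $m = n+r+1-\ell(\bar s) \ge n+r+1-\ell(\bar s') = m'$), or $\ell(\bar s') = \ell(\bar s) = 0$ with $s_0' \le s_0$ (so $m = n+r+s_0 \ge n+r+s_0' = m'$) — and one has to check the mixed possibility $\ell(\bar s) = 0 < \ell(\bar s')$ cannot happen under $\bar s' \prec \bar s$ when $s_0 > 0$, which it doesn't since $\bar s' \prec \bar s$ with $s_0 > 0$ means $s_0' \le s_0$ hence $s_0' \ge 1$ too if the first coordinate already decides, etc.; more carefully, $\bar s' \prec \bar s$ and $\ell(\bar s) = 0$ means $s_0 > 0$ and either $s_0' < s_0$ or $s_0' = s_0$ and $\bar s'$ decided later, so $\ell(\bar s') = 0$ as well. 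Granting $m \ge m'$: if $m > m'$, then $m$ exceeds the ``strict'' threshold in Lemma \ref{lem:repRootPowerComp1} or \ref{lem:repRootPowerComp3}(i) for $\bar s'$, so $[t^j f^m, f^{\bar s'}] v_\mu = 0$; if $m = m'$, then the equality case applies and gives a vector whose leading index is $\mathcal D(\bar s')$ (or $\tilde D(\bar s')$) — and since $\mathcal D$ is order-preserving on the relevant cone and $\bar s' \prec \bar s$, one has $\mathcal D(\bar s') \prec \mathcal D(\bar s)$ — plus lower-order bracket terms, all $\prec \mathcal D(\bar s)$. Thus $\Lambda((t^j f^m - \mu(t^j f^m)) \cdot (f^{\bar s'} v_\mu)) \prec \mathcal D(\bar s)$ whenever the expression is nonzero.

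The main obstacle is the bookkeeping in part (ii): verifying uniformly across all the subcases of Lemmas \ref{lem:repRootPowerComp1} and \ref{lem:repRootPowerComp3} that $m \ge m'$ and that the leading term for $\bar s'$ is strictly dominated by $\mathcal D(\bar s)$, in particular handling the interaction between the lexicographic order $\prec$, the map $\mathcal D$, and the two regimes $\ell > 0$ versus $\ell = 0$. The ``nonzero for all $j$'' claims in part (i) are comparatively easy once one records that $\mu(t^\cdot f^{n+r})$ is $p_{n+r}(\cdot)\lambda^\cdot$ with $p_{n+r}$ a nonzero \emph{constant} (degree $n+r-(n+r) = 0$) — this is exactly the endpoint of the induction in Lemma \ref{lem:degreehom}.
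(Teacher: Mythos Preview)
Your approach is the paper's: rewrite $(t^jf^m-\mu(t^jf^m))\cdot f^{\bar s'}v_\mu$ as $[t^jf^m,f^{\bar s'}]v_\mu$ and read off the leading index from Lemmas~\ref{lem:repRootPowerComp1} and~\ref{lem:repRootPowerComp3}, using Lemma~\ref{lem:degreehom} for the nonvanishing of $\mu(t^{\,\cdot}f^{n+r})$. Two concrete bookkeeping errors need repair, though.

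First, in part~(ii) you claim that $\ell(\bar s)=0$ forces $\ell(\bar s')=0$; this is false (take $n=2$, $\bar s=(1,0)$, $\bar s'=(0,1)$). The argument is salvaged because in the mixed case $\ell(\bar s)=0<\ell(\bar s')$ one still has $m=n+r+s_0\ge n+r+1>n+r+1-\ell(\bar s')$, so Lemma~\ref{lem:repRootPowerComp1} already gives $[t^jf^m,f^{\bar s'}]v_\mu=0$. Second, your assertion $\tilde D(\bar s)=\mathcal D(\bar s)$ when $\ell(\bar s)=0$ holds only for $s_0=1$; in general $\tilde D(\bar s)=(0,s_1,\ldots,s_{n-1})$ while $\mathcal D(\bar s)=(s_0-1,s_1,\ldots,s_{n-1})$. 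The leading index produced by Lemma~\ref{lem:repRootPowerComp3} is genuinely $\tilde D(\bar s)$, and indeed the paper's own proof records $\tilde D$ here rather than $\mathcal D$; the mismatch with the lemma statement is inessential for Theorem~\ref{thm:tensorsimpleV}, since what is used there is only that the $\bar s$-term strictly dominates every $\bar s'$-term, and one checks $\tilde D(\bar s')\prec\tilde D(\bar s)\preceq\mathcal D(\bar s)$ whenever $s_0'=s_0$ and $\bar s'\prec\bar s$. Relatedly, your blanket claim that ``$\mathcal D$ is order-preserving'' is false globally (compare $\mathcal D(1,0)=(0,0)$ with $\mathcal D(0,5)=(0,4)$); it holds only on the locus where $m=m'$, i.e.\ where $\ell(\bar s')=\ell(\bar s)$ (or $s_0'=s_0$ in the $\ell=0$ regime), which is precisely the case you need.
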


\begin{proof}
Note that $0 \leq \ell(\bar s)\leq n-1$. Since $r \geq n-2$, it follows that $m=n+r+1-\ell \geq n+(n-2)+1-(n-1)=n$. Thus, $m \geq n$ and $\mu(t^jf^m)$ is defined.  For the proof, we consider ${\bar s}' \preceq {\bar s}$, so that we can address claims (i) and (ii) simultaneously.

If ${\bar s}' = (0, \ldots, 0)$, then clearly $(t^jf^m-\mu(t^jf^m)) \cdot ( f^{\bar s'} v_{\mu})=0$.
Now suppose ${\bar s}'  \neq (0, \ldots, 0)$. Note that
\begin{align}
(t^jf^m-\mu(t^jf^m)) \cdot ( f^{\bar s'} v_{\mu})=[t^jf^m,  f^{\bar s'}] v_\mu. 
\label{eqn:reduceddegreebracket}
\end{align}
 Write $\bar s= (s_0, \ldots, s_{n-1})$, $\bar s'= (s_0', \ldots, s_{n-1}')$; and $\ell = \ell(\bar s)$, $\ell'=\ell(\bar s')$. Since $\bar s' \preceq \bar s$, it follows that $\ell' \geq \ell$ and $s_\ell' \leq s_\ell$. 

Suppose $\ell > 0$, so that $m=n+r+1-\ell$.  
Applying
Lemma \ref{lem:repRootPowerComp1} to (\ref{eqn:reduceddegreebracket}), we have the following:
\begin{itemize}
\item If $\ell'=\ell$, the maximal term in (\ref{eqn:reduceddegreebracket}) is $f^{\mathcal D ({\bar s}')} v_\mu$, with coefficient $( \ell - m ) s_\ell'  \mu(t^{j+1} f^{n+r})$. (If $\ell(\bar s)>1$, this is the only term.)  
\item If $\ell'>\ell$, then (\ref{eqn:reduceddegreebracket}) is zero. 
\end{itemize}
By Lemma \ref{lem:degreehom}, $\mu(t^{j+1} f^{n+r}) \neq 0$ for all but finitely many $j \in \z$. Since 
$\mathcal D ({\bar s}') \prec \mathcal D ({\bar s})$ for $\bar s' \prec \bar s$,
 this proves both claims for $\ell>0$.

Suppose now that $\ell = 0$.    Applying Lemmas \ref{lem:repRootPowerComp3} and \ref{lem:repRootPowerComp1} to (\ref{eqn:reduceddegreebracket})
\begin{itemize}
\item if $s_0'=s_0$, then the maximal term in (\ref{eqn:reduceddegreebracket}) is $f^{{\tilde {\mathcal D}} ({\bar s}')} v_\mu$, with coefficient $\mu(t^{i+s_0} f^{n+k})$;
\item if $s_0'<s_0$, then (\ref{eqn:reduceddegreebracket}) is zero. 
\end{itemize}
 By Lemma \ref{lem:degreehom}, $\mu(t^{j+s_0} f^{n+r}) \neq 0$ for all but finitely many $j \in \z$; this completes the proof the proof for $\ell=0$.
\end{proof}

\begin{lem}\label{lem:brack-tupleSize}
Let $(\lambda, n, f; \mu, p, r)$ be as in Conventions \ref{conv} and $\bar s \in \z_{\ge 0}^n$ with $|\bar s| > 0$.  Then for $j, m \in \z$ with $m \geq n+s_0$, 
$$[t^j f^m, f^{\overline s}] v_\mu \in {\rm span}_\cc \{ f^{\overline r} v_\mu \mid | \overline r| < | \overline s |  \}.$$
\end{lem}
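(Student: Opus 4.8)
The plan is to prove the identity by induction on $|\bar s|$, using that $\ad(t^j f^m)$ acts on $U(\cc[t^{\pm}])$ as a derivation, together with the commutation rule
$[t^j f^m, t^k f^\ell] = (\ell - m)t^{j+k+1}f^{\ell+m-1} + (k-j)t^{j+k}f^{\ell+m}$
recorded just before Section~\ref{subsec:basisNotation}, and the fact (Lemma~\ref{lem:homzerocentral} and the induced construction) that every element of $\langle f^n\rangle$ acts on $v_\mu$ by a $\mu$-scalar. Write $F_k = {\rm span}_\cc\{ f^{\bar r} v_\mu \mid \bar r \in \z_{\ge 0}^n,\ |\bar r| \le k \}$, so that by Lemma~\ref{lem:inducedbases} these form an exhaustive increasing filtration of $V_\mu^{f^n}$ and the assertion to be proved is $[t^j f^m, f^{\bar s}] v_\mu \in F_{|\bar s| - 1}$. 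Alongside the main induction I would carry the auxiliary claim that for $0 \le \ell \le n-1$ one has $f^\ell \cdot F_k \subseteq F_{k+1}$; this just says that the $F_k$ are the subspaces induced from the PBW filtration of $U(\cc[t^{\pm}])$, and it is proved by a straightening induction (see the last paragraph).

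For the base case $|\bar s| = 1$ we have $f^{\bar s} = f^i$ for some $i$, with either $i = 0$, $s_0 = 1$, $m \ge n+1$, or $i \ge 1$, $s_0 = 0$, $m \ge n$. In both cases $[t^j f^m, f^i] = (i-m)t^{j+1}f^{m+i-1} - j\, t^j f^{m+i}$ is a combination of elements $t^\bullet f^a$ with $a \ge n$, each of which lies in $\langle f^n\rangle$; hence it acts on $v_\mu$ by a scalar and $[t^j f^m, f^i] v_\mu \in \cc v_\mu = F_0$, as needed.

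For the inductive step write $\ell = \ell(\bar s)$ and use $f^{\bar s} = f^\ell \cdot f^{\mathcal D(\bar s)}$ with $|\mathcal D(\bar s)| = |\bar s| - 1$, so that the derivation property gives
\begin{equation*}
[t^j f^m, f^{\bar s}] v_\mu = [t^j f^m, f^\ell]\cdot f^{\mathcal D(\bar s)} v_\mu + f^\ell \cdot [t^j f^m, f^{\mathcal D(\bar s)}] v_\mu .
\end{equation*}
The crucial observation — and the only place the hypothesis $m \ge n + s_0$ is needed — is that the exponents $m+\ell-1$ and $m+\ell$ occurring in $[t^j f^m, f^\ell]$ are both $\ge n$: if $\ell \ge 1$ then $s_0 = 0$ and $m \ge n$, while if $\ell = 0$ then $s_0 \ge 1$ and $m \ge n+1$. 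Thus $[t^j f^m, f^\ell]$ is a combination of elements $g = t^\bullet f^b$ with $b \ge n$, and moreover $b \ge m-1 \ge n + s_0 - 1$; since the $0$-component of $\mathcal D(\bar s)$ is $s_0 - 1$ when $\ell = 0$ and $0$ when $\ell \ge 1$, one checks $b \ge n$ and $b \ge n + (\text{$0$-component of }\mathcal D(\bar s))$, so the inductive hypothesis applies to $[g, f^{\mathcal D(\bar s)}]v_\mu \in F_{|\bar s|-2}$. Writing $g\cdot f^{\mathcal D(\bar s)} v_\mu = \mu(g)\, f^{\mathcal D(\bar s)} v_\mu + [g, f^{\mathcal D(\bar s)}] v_\mu$ then puts the first summand in $F_{|\bar s|-1}$. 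For the second summand, the inductive hypothesis (valid since $m \ge n+s_0 \ge n + (\text{$0$-component of }\mathcal D(\bar s))$) gives $[t^j f^m, f^{\mathcal D(\bar s)}] v_\mu \in F_{|\bar s|-2}$, and then the auxiliary claim, applied with the complement-basis element $f^\ell$ ($\ell \le n-1$), gives $f^\ell \cdot [t^j f^m, f^{\mathcal D(\bar s)}] v_\mu \in F_{|\bar s|-1}$. Combining the two summands yields $[t^j f^m, f^{\bar s}] v_\mu \in F_{|\bar s|-1}$.

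The main obstacle is the bookkeeping of $f$-exponents in the inductive step: one must verify that every $\langle f^n\rangle$-type element produced genuinely has $f$-exponent $\ge n$ (so it acts by a $\mu$-scalar upon reaching $v_\mu$) and is large enough, relative to the $0$-component of the remaining tuple, to feed back into the inductive hypothesis. This is exactly what the quantitative hypothesis $m \ge n + s_0$ buys, together with the dichotomy ``$\ell(\bar s) \ge 1 \Rightarrow s_0 = 0$'' versus ``$\ell(\bar s) = 0 \Rightarrow s_0 \ge 1$''. The secondary point is the auxiliary claim $f^\ell \cdot F_k \subseteq F_{k+1}$; if one prefers to avoid appealing to the PBW filtration abstractly, it can be folded into the same induction, proved by commuting $f^\ell$ rightward past the factors of $f^{\bar r}$ via $[f^\ell, f^i] = (i-\ell)\bigl(f^{\ell+i} + \lambda f^{\ell+i-1}\bigr)$ (using $t f^a = f^{a+1} + \lambda f^a$), the leading term becoming $f^{\bar r + e_\ell} v_\mu \in F_{|\bar r|+1}$ and the lower-degree corrections — including any factors $f^a$ with $a \ge n$, handled by the Lemma at strictly smaller tuple size — landing in $F_{|\bar r|}$.
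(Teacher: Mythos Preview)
Your proof is correct and follows essentially the same approach as the paper: induction on $|\bar s|$, the decomposition $f^{\bar s} = f^{\ell(\bar s)}\cdot f^{\mathcal D(\bar s)}$, the derivation property of $\ad(t^j f^m)$, and a PBW/straightening argument to handle $f^{\ell}\cdot[t^j f^m, f^{\mathcal D(\bar s)}]v_\mu$. Your filtration notation $F_k$ and the explicit auxiliary claim $f^\ell\cdot F_k\subseteq F_{k+1}$ package the paper's ``standard straightening argument'' more cleanly, and you check the exponent bookkeeping (that $b \ge n + (\mathcal D(\bar s))_0$ in each case) more carefully than the paper does.
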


\begin{proof}
We prove this by induction on $| \bar s|$. The base case follows immediately from the commutator relation.  
Now suppose $|\bar s| >1$ and recall $f^{\overline s} =  f^{\ell (\overline s)} \cdot f^{{\mathcal D}(\overline s)}$.  Then, 
\begin{align}
[t^j f^m, f^{\overline s}] v_\mu &= [t^j f^m, f^{\ell (\overline s)} \cdot f^{{\mathcal D}(\overline s)} ] v_\mu \nonumber \\
&= [t^j f^m, f^{\ell (\overline s)} ] \cdot f^{{\mathcal D}(\overline s)} v_\mu + f^{\ell (\overline s)} \cdot [t^j f^m, f^{{\mathcal D}(\overline s)} ] v_\mu \nonumber \\
&= \left(  -jt^j f^{m + \ell ( \overline s)} + (\ell ( \overline s) - m) t^{j+1} f^{m + \ell ( \overline s)-1}  \right) \cdot f^{{\mathcal D}(\overline s)} v_\mu \nonumber \\
& \quad + f^{\ell (\overline s)} \cdot [t^j f^m, f^{{\mathcal D}(\overline s)} ] v_\mu \nonumber \\
&= -j \mu(t^j f^{m + \ell ( \overline s)}) f^{{\mathcal D}(\overline s)} v_\mu + (\ell ( \overline s) - m) \mu(t^{j+1} f^{m + \ell ( \overline s)-1} ) f^{{\mathcal D}(\overline s)} v_\mu \label{eqn:bracket2}\\
&\quad 
-j  [t^j f^{m + \ell ( \overline s)} , f^{{\mathcal D}(\overline s)}] v_\mu + (\ell ( \overline s) - m) [t^{j+1} f^{m + \ell ( \overline s)-1} , f^{{\mathcal D}(\overline s)}] v_\mu \label{eqn:bracket1}\\
& \quad + f^{\ell (\overline s)} \cdot [t^j f^m, f^{{\mathcal D}(\overline s)} ] v_\mu \label{eqn:bracket3}
\end{align}
Since  $|{\mathcal D}(\overline s)| < |\bar s|$, the terms in (\ref{eqn:bracket2}) have the desired form. Also, if $\ell(\bar s)=0$, then ${\mathcal D}(\overline s)_0< s_0$; this implies $m+ \ell (s), m+\ell(s)-1 \geq n+ {\mathcal D}(\overline s)_0$. Therefore, the terms in (\ref{eqn:bracket1}) have the desired form by induction.

Applying the inductive hypothesis to (\ref{eqn:bracket3}), we obtain
\begin{align*}
f^{\ell (\overline s)} \cdot [t^j f^m, f^{{\mathcal D}(\overline s)} ] v_\mu &= \sum_{| \overline r | < | {\mathcal D}( \overline s)|} c_{\overline r} f^{\ell ( \overline s )} \cdot f^{\overline r} v_\mu\\
&=\sum_{| \overline r' | \leq  | {\mathcal D}( \overline s) |} c_{\overline r'}'  \cdot f^{\overline r'} v_\mu
\end{align*}
where $c_{\overline r}, c_{\overline r'}'  \in \cc$ and we apply a standard straightening argument to go from the first to the second line. Thus, (\ref{eqn:bracket3}) has the form given in the claim. 

\end{proof}

\subsection{$V^{f^n}_{\mu}$ is not simple for homomorphisms $\mu$ of small degree}
With $(\lambda, n, f; \mu, p, r)$ as in Conventions \ref{conv}, we show that if  $r\le n-3$, then $V^{f^n}_{\mu}$ is not simple. First we establish a computational lemma.

\begin{lem} \label{lem:Faulhaber}
Let $j, k \in \z_{>0}$.  Define $P_k(t) = \frac{1}{k+1}\sum_{i=0}^k (-1)^i \binom{k+1}{i} B_\ell t^{k+1-i}$, where $B_i$ denotes the $i^{th}$ Bernoulli number . Then,
\begin{itemize}
\item[(i)] (Faulhaber's Formula) $\sum_{i=1}^j i^k = P_k(j)$;
\item[(ii)] $\sum_{i=1}^j (-i)^k = -P_k(-j-1)$. 
\end{itemize}
\end{lem}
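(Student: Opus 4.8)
The plan is to treat part (i) as the classical Faulhaber identity and to deduce part (ii) from it by a short finite-difference argument. For part (i) I would give the standard generating-function proof: writing $S_k(j)=\sum_{i=1}^j i^k$ and summing over $k$ gives $\sum_{k\ge 0}S_k(j)\frac{x^k}{k!}=\sum_{i=1}^j e^{ix}=e^x\cdot\frac{e^{jx}-1}{e^x-1}=\frac{e^{jx}-1}{x}\cdot\frac{xe^x}{e^x-1}$. Since $\frac{e^{jx}-1}{x}=\sum_{m\ge 0}\frac{j^{m+1}}{(m+1)!}x^m$ and $\frac{xe^x}{e^x-1}=\sum_{i\ge 0}(-1)^iB_i\frac{x^i}{i!}$ (the Bernoulli generating function after $x\mapsto -x$), extracting the coefficient of $\frac{x^k}{k!}$ and using $\frac{k!}{(k+1-i)!\,i!}=\frac{1}{k+1}\binom{k+1}{i}$ yields $S_k(j)=\frac{1}{k+1}\sum_{i=0}^k(-1)^i\binom{k+1}{i}B_ij^{k+1-i}=P_k(j)$. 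One may instead simply cite this, e.g.\ via the Bernoulli polynomials; it is entirely standard.

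Next I would record the polynomial identity $P_k(t)-P_k(t-1)=t^k$ for each fixed $k\ge 1$. For a positive integer $m\ge 2$ this reads $P_k(m)-P_k(m-1)=S_k(m)-S_k(m-1)=m^k$ by part (i), and for $m=1$ it reads $P_k(1)-P_k(0)=1-0=1$, using $S_k(1)=1$ together with $P_k(0)=0$; the latter holds because, when $k\ge 1$, every term of the sum defining $P_k(0)$ carries a positive power of $0$. Hence $P_k(t)-P_k(t-1)-t^k$ is a polynomial vanishing at all positive integers, so it is identically zero; in particular the identity remains valid at non-positive integer arguments.

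Part (ii) then follows by induction on $j\ge 0$. The base case $j=0$ asserts $0=-P_k(-1)$, which holds since the identity of the previous paragraph at $t=0$ gives $P_k(-1)=P_k(0)-0^k=0$ (here $k\ge 1$ is used again). For the inductive step, assume $\sum_{i=1}^j(-i)^k=-P_k(-j-1)$; adding $(-(j+1))^k=(-1)^k(j+1)^k$ and invoking the identity at $t=-j-1$, namely $P_k(-j-2)=P_k(-j-1)-(-j-1)^k=P_k(-j-1)-(-1)^k(j+1)^k$, we obtain $\sum_{i=1}^{j+1}(-i)^k=-P_k(-j-2)=-P_k(-(j+1)-1)$, which is the claim for $j+1$.

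I do not expect a genuine obstacle: part (i) is classical and part (ii) is a two-line induction once the difference relation $P_k(t)-P_k(t-1)=t^k$ is available. The only points needing care are the Bernoulli-number bookkeeping (the factor $(-1)^i$ in the definition of $P_k$ amounts to using the convention $B_1=+1/2$) and the repeated use of the hypothesis $k\ge 1$ to guarantee $P_k(0)=0$ and $0^k=0$, which is precisely what makes the two base cases work.
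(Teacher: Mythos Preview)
Your proof is correct, but your route to part (ii) differs from the paper's. The paper cites (i) and then, for (ii), uses the parity structure of the Bernoulli numbers ($B_1=-\tfrac12$ and $B_i=0$ for odd $i>1$) to derive the reflection identity $P_k(-t)=(-1)^{k+1}\bigl(P_k(t)-t^k\bigr)$ directly from the defining sum, after which substituting $t=j+1$ and applying (i) gives the result in one stroke, with no induction. Your argument instead proves the difference relation $P_k(t)-P_k(t-1)=t^k$ by polynomial interpolation from (i) and then inducts on $j$. Your approach is in a sense more self-contained, since it uses nothing about the $B_i$ beyond what is already encoded in part (i) and the vanishing $P_k(0)=0$; the paper's approach is shorter and non-inductive but requires invoking the odd-index vanishing of Bernoulli numbers as an extra input. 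Both are entirely standard.
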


\begin{proof} 
The statement in (i) is a version of the well-known formula for sums of powers of positive integers; for example, see \cite{B96}. To prove (ii), recall that $B_1=-1/2$ and $B_{i}=0$ for $i \neq 1$ odd. Then, $P_k(-t)= (-1)^{k+1} \left( P_k(t) - t^k\right)$, and so 
\begin{align*}
P_k(-(j+1)) &=(-1) \left( P_k(j+1)-(j+1)^k \right) \\
&=(-1)^{k+1} \left(\sum_{i=1}^{j+1} - (i+1)^k \right) \\
&= (-1)^{k+1} \sum_{i=1}^j i^k = - \sum_{i=1}^j (-i)^k.
\end{align*}
\end{proof} 

\begin{prop} \label{prop:notsimplesmalldegree}
Let $(\lambda, n, f; \mu, p, r)$ be as in Conventions \ref{conv}; and suppose $r \leq n-3$ or $n=1$ and $r=0$. Then
$$
M= {\rm span} \{ (f_{\lambda}^0)^{[s_0]} (f_{\lambda}^1)^{[s_1]} \cdots (f_{\lambda}^{n-1})^{[s_{n-1}]} v_{\mu} \mid s_0, \ldots, s_{n-2} \in \z_{\ge 0}, s_{n-1} \in \z_{\ge 1} \}
$$
is a proper submodule of $V_\mu^{f^n}$ and
$$
V_\mu^{f^n} /M \cong V_{\mu'}^{f^{n-1}},
$$
 where 
\begin{itemize}
\item if $\mu$ is the zero homomorphism, then $\mu'$ is the zero homomorphism;
\item if $r>0$, then $\mu': \Vir^{f^{n-1}} \rightarrow \cc$ is a homomorphism such that $\mu'(t^j f^{n-1}) = q(i) \lambda^i$ for some polynomial $q(i)$ of degree $r+1$.
\end{itemize}
\end{prop}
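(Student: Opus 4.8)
The plan is to verify directly that $M$ is a submodule, identify the quotient with a free $\langle f^{n-1}\rangle$-module, and then check that the induced action on the quotient is the one associated to a homomorphism $\mu'$ of the asserted degree. For the first part, recall from Lemma \ref{lem:inducedbases} that $V_\mu^{f^n}$ has basis $\{f^{\bar s}v_\mu\}$, so $M$ is spanned by those basis vectors with $s_{n-1}>0$; equivalently, $V_\mu^{f^n}/M$ has basis the images of $f^{\bar s}v_\mu$ with $s_{n-1}=0$. To show $M$ is a $\Vir$-submodule it suffices to show $t^k \cdot f^{\bar s}v_\mu \in M$ whenever $s_{n-1}>0$, for all $k\in\z$. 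Writing $t^k = t^k f^0$ (or better, expanding $t^k$ in the PBW-type basis $\{f^0,\dots,f^{n-1}\}\cup\{t^j f^n\}$ of $\cc[t^\pm]$), one commutes $t^k$ past the factors $(f^0)^{[s_0]}\cdots(f^{n-2})^{[s_{n-2}]}$; each commutator $[t^k,f^i]$ lies in $\langle f\rangle$ but more importantly, the straightening argument keeps a factor of $f^{n-1}$ (or higher powers, which reduce mod $\langle f^n\rangle$ to multiples of $f^{n-1},\dots$) present, so every resulting term still has $s_{n-1}\ge 1$ or lies in $\langle f^n\rangle v_\mu \subseteq \cc v_\mu$—and one must check the latter contributions cancel or vanish, which is exactly where the degree hypothesis $r\le n-3$ (so that $\mu(t^j f^m)=0$ for $m\ge n+r+1$, and in particular for many relevant $m$) enters via Lemma \ref{lem:degreehom}. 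The cleanest route is to observe that $M = (f^{n-1})\!\cdot\! V_\mu^{f^n} + (\text{terms forced to vanish})$ and use Lemma \ref{lem:brack-tupleSize} together with Lemma \ref{lem:degreehom} to control the brackets.

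Next I would set up the quotient map. Since $\{f^0,\dots,f^{n-2}\}$ extends $\{t^j f^{n-1}\mid j\in\z\}$ to a basis of $\cc[t^\pm]$, the module $V_{\mu'}^{f^{n-1}} = U(\Vir)\otimes_{\Vir^{f^{n-1}}}\cc_{\mu'}$ has basis $\{(f^0)^{[s_0]}\cdots(f^{n-2})^{[s_{n-2}]}v_{\mu'}\}$, which is in evident bijection with the chosen basis of $V_\mu^{f^n}/M$. I would define $\varphi: V_\mu^{f^n}\to V_{\mu'}^{f^{n-1}}$ on basis vectors by $f^{\bar s}v_\mu \mapsto 0$ if $s_{n-1}>0$ and $f^{\bar s}v_\mu\mapsto (f^0)^{[s_0]}\cdots(f^{n-2})^{[s_{n-2}]}v_{\mu'}$ if $s_{n-1}=0$, and verify it is a $\Vir$-module homomorphism with kernel exactly $M$. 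The verification reduces to checking $\varphi(t^k\cdot f^{\bar s}v_\mu) = t^k\cdot \varphi(f^{\bar s}v_\mu)$; both sides are computed by the same straightening in $U(\cc[t^\pm])$, the only difference being that on the left one reduces powers of $f$ modulo $\langle f^n\rangle$ (applying $\mu$, i.e. Lemma \ref{lem:degreehom}) while on the right one reduces modulo $\langle f^{n-1}\rangle$ (applying $\mu'$). So the whole thing comes down to: \emph{what must $\mu'$ be so that these two reductions agree after projecting to the quotient?}

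To pin down $\mu'$: the action of $t^j f^{n-1}$ on $v_\mu \in V_\mu^{f^n}$ is $(t^j f^{n-1})v_\mu$, which in the quotient must equal $\mu'(t^j f^{n-1})\,\overline{v_\mu}$. Now $(t^j f^{n-1})v_\mu = f^{\bar e}v_\mu$ where $\bar e=(0,\dots,0,1)$ lies in $M$... but that is not quite it, since $t^j f^{n-1}\notin\langle f^n\rangle$; rather I should compute how $t^j f^{n-1}$ acts on a general basis vector and read off the scalar appearing on the $v_{\mu'}$-component. The natural guess, consistent with Lemma \ref{lem:degreehom}, is $\mu'(t^j f^{n-1}) = \mu(t^j f^{n-1})\bmod M$-correction; more precisely, if $\mu(t^j f^n)=p(j)\lambda^j$ with $\deg p=r$, then Lemma \ref{lem:degreehom}'s inductive formula $\mu(t^j f^m)=p_m(j)\lambda^j$ with $\deg p_m = n+r-m$ suggests extending to $m=n-1$ "backwards," giving $\deg p_{n-1}=r+1$; so one defines $q$ by the antidifference relation $p_{n-1}(j+1)\lambda^{j+1}-\lambda p_{n-1}(j)\lambda^j = \text{(the value forced by consistency)}$, i.e. $q=p_{n-1}$ with $\deg q = r+1$. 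When $\mu=0$ all these polynomials vanish and $\mu'=0$. The main obstacle I anticipate is precisely this last step—showing the backward extension is \emph{consistent}, i.e. that demanding $\varphi$ be a module map does not over-determine $\mu'$: one needs that the scalar forced on the $v_{\mu'}$-component by acting with $t^j f^{n-1}$ is independent of which basis vector one acts on, and that the resulting sequence $j\mapsto \mu'(t^j f^{n-1})$ genuinely satisfies the recurrence \eqref{eqn:recurrence} with characteristic polynomial $f^{n-1}$. That is a finite computation using the commutator identity $[t^j f^m, t^k f^\ell] = (\ell-m)t^{j+k+1}f^{\ell+m-1}+(k-j)t^{j+k}f^{\ell+m}$ and Lemma \ref{lem:degreehom}, but it is the crux; once it is done, Lemma \ref{lem:mupoly} guarantees $\mu'$ is a bona fide homomorphism of the claimed degree, and the isomorphism $V_\mu^{f^n}/M\cong V_{\mu'}^{f^{n-1}}$ follows since $\varphi$ is a surjection of modules with kernel $M$ that is a bijection on the complementary basis.
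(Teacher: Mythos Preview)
Your plan has the right overall shape, but you are missing two key ideas that the paper uses to turn this from a messy computation into a short proof, and without them the sketch has real gaps.

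First, you try to show $M$ is a submodule by checking $t^k \cdot f^{\bar s}v_\mu \in M$ for every $\bar s$ with $s_{n-1}>0$, and you acknowledge this straightening argument is delicate. The paper avoids this entirely: it computes
\[
t^j f^n \cdot f^{n-1} v_\mu = \mu(t^j f^n)\, f^{n-1} v_\mu \;-\; j\mu(t^j f^{2n-1})\,v_\mu \;-\;\mu(t^{j+1} f^{2n-2})\,v_\mu,
\]
and the hypothesis $r\le n-3$ is \emph{exactly} the condition $2n-2 \ge n+r+1$ needed (via Lemma~\ref{lem:degreehom}) to kill the last two terms. Thus $f^{n-1}v_\mu$ spans a one-dimensional $\langle f^n\rangle$-module with character $\mu$, so $M := U(\Vir)\cdot f^{n-1}v_\mu$ is a submodule by construction, and the universal property of $V_\mu^{f^n}$ gives a surjection $V_\mu^{f^n}\to M$ taking $v_\mu\mapsto f^{n-1}v_\mu$. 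PBW then identifies $M$ with the span in the statement. This is one line in place of your straightening argument, and it explains transparently where $r\le n-3$ enters.

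Second, your attempt to pin down $\mu'$ by ``extending Lemma~\ref{lem:degreehom} backwards'' is heuristic, and you correctly flag the consistency check as the unresolved crux. The paper instead computes $\mu'$ directly in the quotient. Since $f^{n-1}\bar v_\mu = 0$, one has $t^j f^{n-1}\bar v_\mu - \lambda t^{j-1}f^{n-1}\bar v_\mu = t^{j-1}f^n\bar v_\mu = p(j-1)\lambda^{j-1}\bar v_\mu$; telescoping gives
\[
t^j f^{n-1}\bar v_\mu = \lambda^{j-1}\sum_{i=0}^{j-1} p(i)\,\bar v_\mu
\]
(and similarly for negative $j$). Faulhaber's formula (Lemma~\ref{lem:Faulhaber}) then shows the right-hand side is $q(j)\lambda^j$ with $\deg q = \deg p + 1 = r+1$. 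This determines $\mu'$ uniquely and visibly as a homomorphism (the recurrence is automatic from the telescoping identity), so your consistency worry disappears. The isomorphism $V_{\mu'}^{f^{n-1}} \cong V_\mu^{f^n}/M$ then follows from the universal property and a basis count, with no need to define a map on basis vectors and verify equivariance by hand.
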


\begin{proof}
We have
\begin{align}
t^j f^n \cdot f^{n-1} v_{\mu} &= (f^{n-1} \cdot t^j f^n +[t^j f^n, f^{n-1}]) v_{\mu} \nonumber \\
&= \mu(t^j f^n) f^{n-1} v_{\mu} \nonumber \\
&\quad + (-i\mu(t^j f^{2n-1}) -\mu(t^{j+1} f^{2n-2})) v_{\mu}.
\end{align}
By our assumption $r \leq n-3$, $\mu(t^j f^{2n-1})=\mu(t^{j+1} f^{2n-2}))=0$.  Thus $f^{n-1} v_{\mu}$ generates a one-dimensional $\langle f^n \rangle$-module isomorphic to $\cc_{\mu}$. Let $M = U( \Vir ) \cdot f^{n-1} v_\mu$ be the $\Vir$-module generated by $f^{n-1} v_{\mu}$.  Then the universal property of $V_\mu^{(t - \lambda)^n}$ implies that there is a surjective map $V_\mu^{(t - \lambda)^n} \rightarrow M$. By applying an appropriate PBW basis of $U(\Vir)$ (see the proof of Lemma \ref{lem:inducedbases}) we get that $M$ has a basis
$$
\{ (f_{\lambda}^0)^{[s_0]} (f_{\lambda}^1)^{[s_1]} \cdots (f_{\lambda}^{n-1})^{[s_{n-1}]} \cdot f^{n-1} v_{\mu} \mid s_0, \ldots, s_{n-1} \in \z_{\geq 0} \}.
$$
This shows that $M$ is a proper submodule of $V_\mu^{f^n}$ and $M \cong V_\mu^{f^n}$ via the map described above. 

Now we consider $V_\mu^{f^n} /M$.  Let $\overline v_{\mu}$ be the image of $v_{\mu}$ in $V_\mu^{f^n} /M$. Note that $f^{n-1} \overline v_{\mu}=0$ and $t^j f^n \overline v_\mu = \mu(t^j f^n) \bar v_\mu=p(j) \lambda^j \bar v_\mu$ for all $j \in \z$. Using the relation $t^j f^{n-1}-\lambda t^{i-1} f^{n-1}=t^{j-1} f^n$, it's straightforward to argue inductively that
\begin{align*}
t^jf^{n-1} \overline v_{\mu} &= \lambda^{j-1} \sum_{i=0}^{j-1} p(i) \overline v_{\mu}; \\
t^{-j}f^{n-1} \overline v_{\mu} &= -\lambda^{-j-1} \sum_{i=1}^{j} p(-i) \overline v_{\mu}.
\end{align*}
for $j \in \z_{\ge 1}$. 

It follows from Lemma \ref{lem:Faulhaber} that $t^j f^{n-1} \overline v_{\mu}=\mu'(t^j f^{n-1}) \overline v_{\mu}$ where $\mu'$  is a homomorphism of the form claimed.

The universal property for $V_{\mu'}^{f^{n-1}}$ implies that we have a surjective homomorphism $\theta: V_{\mu'}^{f^{n-1}} \rightarrow V_\mu^{f^n} /M$ such that $\theta(v_{\mu'})= \bar v_{\mu}$ .  Given the basis for $M$ and the basis for $V_\mu^{f^n}$, it follows that $\{ (f^0)^{[k_0]} \cdots (f^{n-2})^{[s_{n-2}]} \overline v_{\mu} \mid s_0, \ldots, s_{n-2} \in \z_{\geq 0} \}$ is a basis for $V_\mu^{f^n} /M$. Thus, $\theta$ maps the standard basis for  $V_{\mu'}^{f^{n-1}}$ to the basis for $V_\mu^{f^n} /M$. It follows that $\theta$ is an isomorphism.
\end{proof}

\section{Tensor products of induced modules} \label{sec:tensorproducts}

For $L \in \z_{\geq 0}$, define $\Vir_{\geq L}= {\rm span} \{ z, e_j \mid j \geq L \}$. We refer to a $\Vir$-module $V$ as {\it $\Vir^+$-locally annihilated} if for each $v \in V$, there is some $L \in \z_{\geq 0}$ such that $\Vir_{\geq L} v=0$.  In this section we study tensor products of the form
\begin{equation} \label{eqn:tensor}
\bigotimes_{i=1}^k V^{(t-\lambda_i)^{n_i}}_{\mu_i} \otimes V,
\end{equation}
where $V$ is a cyclic, $\Vir^+$-locally annihilated module. 
In particular, we determine simplicity conditions for tensor products of the form  (\ref{eqn:tensor}) and conditions for when two such tensor products can be isomorphic. In Section \ref{sec:tensorapplications}, we specialize $V$--to Verma modules and certain quotients, as well as Whittaker modules--to produce new classes of simple induced modules. 

This tensor product construction is inspired by and generalizes \cite{TZ16, TZ13}. Moreover, \cite{MZ14} have shown that simple modules $V$ with a $\Vir_N$-locally finite action, for some $N \in \z_{\geq 0}$, have the form for $V$ described above.

We use the following conventions throughout the section. Recall that we regard $V_\mu^{f_i^{n_i}}$ simultaneously as a $\Vir$-module and as a $\cc [t^{\pm}]$-module.
 \begin{conv} \label{conv2}
Fix $k \in \z_{\ge 1}$, $n_1, \ldots, n_k \in \z_{\geq 1}$ and $\lambda_1, \ldots, \lambda_k \in \cc^\times$ such that $\lambda_i \neq \lambda_j$ for $i \neq j$. For each $1 \leq i \leq k$, let $f_i = t-\lambda_i$, and let $\mu_i:  \Vir^{f_i^{n_i}}  \rightarrow \cc$ be a homomorphism associated to the polynomial $p_i$ of degree $r_i$ as in Section \ref{sec:homsAndOneDimReps}.  We summarize this as $(k; \lambda_i, n_i, f_i; \mu_i, p_i, r_i)$.   
Define $n_0 = \sum_{i=1}^k n_i$. Let $v_i \in V^{f_i^{n_i}}_{\mu_i}$ represent the canonical generator of $V^{f_i^{n_i}}_{\mu_i}$ and let $v_0 = v_1 \otimes \cdots \otimes v_k \in \bigotimes_{i=1}^k V^{f_i^{n_i}}_{\mu_i}$.
 \end{conv}
  
 To denote bases for $\bigotimes_{i=1}^k V^{f_i^{n_i}}_{\mu_i}$, note that any element ${\bar {\bar s}} \in \z_{\ge 0}^{n_0}$ can be expressed
${\bar {\bar s}}= (\bar s_1, \ldots, \bar s_k) = (s_{1, 0} \, \ldots, s_{1, n_1-1}, \ldots, s_{k, 0} \, \ldots, s_{k, n_k-1})$,
where $\overline s_i = ( s_{i,0}, \ldots , s_{i,n_i - 1})$; let $|{\bar {\bar s}}|= \sum_{i=1}^k |\bar s_i |$. Define
\begin{align*}
f^{\bar {\bar s}} v_0 &= f_1^{\bar s_1} v_1 \otimes \cdots \otimes f_k^{\bar s_k} v_k.
\end{align*}
Lemma \ref{lem:inducedbases} implies that the set
$\{ f^{\bar {\bar s}} v_0 \mid {\bar {\bar s}} \in \z_{\ge 0}^{n_0}\}$ is a basis for $\bigotimes_{i=1}^k V^{f_i^{n_i}}_{\mu_i}$. Below we apply the lexicographic ordering to $\z_{\ge 0}^{n_0}$; this is compatible in the obvious way with the lexicographic order on individual components of the tensor product.

Regarding $\cc [t]$ as a Lie subalgebra of $\cc [t^\pm]$, Lemma \ref{lem:inducedbases} implies that $V_{\mu_i}^{f_i^{n_i}}$ is generated as a $\cc [t]$-module by the vector $v_i$.  The following lemma shows that $ \bigotimes_{i=1}^k V^{f_i^{n_i}}_{\mu_i}$ is generated as a $\cc[t]$-module by $v_0$. 

\begin{lem} \label{lem:cyclictensor}
Let $(k; \lambda_i, n_i, f_i; \mu_i, p_i, r_i)$ be as in Conventions \ref{conv2}. Then,
$$\bigotimes_{i=1}^k V^{f_i^{n_i}}_{\mu_i} = U(\cc [t]) v_0.$$
\end{lem}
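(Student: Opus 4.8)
The goal is to show that the tensor product $\bigotimes_{i=1}^k V^{f_i^{n_i}}_{\mu_i}$ is cyclic over $U(\cc[t])$ with generator $v_0$. Since Lemma~\ref{lem:inducedbases} gives a basis $\{f^{\bar{\bar s}} v_0 \mid \bar{\bar s}\in\z_{\ge 0}^{n_0}\}$ consisting of ``monomial'' vectors, it suffices to show every such basis vector lies in $U(\cc[t]) v_0$. The plan is to induct on the total degree $|\bar{\bar s}|$, using the lexicographic order on $\z_{\ge 0}^{n_0}$ to break ties and control error terms. The base case $|\bar{\bar s}|=0$ is trivial since $v_0$ itself is the generator.

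\textbf{Key steps.} First I would record how an element $t^j \in \cc[t]$ (equivalently $e_j$, acting diagonally via $t^j(w_1\otimes\cdots\otimes w_k)=\sum_i w_1\otimes\cdots\otimes t^j w_i\otimes\cdots\otimes w_k$) moves a monomial $f^{\bar{\bar s}} v_0$. For a single tensor factor $V^{f_i^{n_i}}_{\mu_i}$, I would choose $j$ so that $t^j f_i^{n_i} \in \langle f_i^{n_i}\rangle$ acts on $v_i$ by the scalar $\mu_i(t^j f_i^{n_i})$, and then expand $t^j$ in terms of the ``straightening'' basis: on the $i$-th factor, $t^j = (\text{scalar})\, f_i^{[n_i]}\cdot(\ldots) + (\text{lower-order corrections})$ — more precisely, I would write $t^j$ as a $\cc$-linear combination of $f_i^0,\dots,f_i^{n_i-1}$ plus an element of $\langle f_i^{n_i}\rangle$, so that acting with $t^j$ on $f_i^{\bar s_i} v_i$ produces $f_i^{\bar s_i + e_0} v_i$ (or similar) plus terms that are either lower in total degree or lower in the lexicographic order. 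The crucial point is that, because the expansion of $t^j$ in the enveloping-algebra basis $\{(f_i^0)^{[*]}\cdots(f_i^{n_i-1})^{[*]}\}$ has a leading term that increments exactly one exponent (say $s_{i,0}$), by choosing which factor(s) to act on and iterating, I can build up an arbitrary monomial $f^{\bar{\bar s}} v_0$ modulo vectors that are strictly smaller in the well-order (lexicographic order refined by $|\cdot|$). Then by the inductive hypothesis those smaller vectors already lie in $U(\cc[t]) v_0$, and hence so does $f^{\bar{\bar s}} v_0$. An auxiliary input is that for each relevant bracket we can pick infinitely many admissible $j$ (cf.\ Lemma~\ref{lem:degreehom}), so the needed scalars $\mu_i(t^j f_i^{n_i})$ are nonzero and invertible, allowing us to solve for the leading monomial.

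\textbf{Main obstacle.} The real work is the bookkeeping: acting with $t^j$ on the tensor product hits every factor simultaneously (the coproduct is diagonal), so a single application of $t^j$ produces a sum over $i$ of terms, each a monomial in one factor times the unchanged monomials in the others. Isolating the one monomial I want requires either (a) acting on factors one at a time is \emph{not} possible, so instead I must run an induction that handles the whole tuple $\bar{\bar s}$ at once, carefully arguing that the ``cross terms'' (where $t^j$ lands on the wrong factor) are lower in the lexicographic order or can be cleared by a separate induction, or (b) exploiting that the distinct roots $\lambda_i$ let one use a Vandermonde / interpolation argument across several values of $j$ to project onto a single factor. I expect approach (b): by taking suitable $\cc$-linear combinations of $t^j v_0$ over sufficiently many $j$, using that the relevant action involves $\lambda_i^j$ with the $\lambda_i$ distinct, one can produce an operator acting essentially on a single designated factor as multiplication by $f_i^{[0]}$ (modulo submodule already in $U(\cc[t])v_0$). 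Once that projection-to-one-factor step is in hand, the problem reduces to the single-module statement (which is immediate from Lemma~\ref{lem:inducedbases}), and the induction on $|\bar{\bar s}|$ closes. So the hard part is setting up this interpolation/projection cleanly and verifying the error terms genuinely decrease in the chosen well-order.
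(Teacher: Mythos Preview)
Your induction on $|\bar{\bar s}|$ and your diagnosis of the obstacle (the coproduct hits every factor at once) are exactly right, and match the paper. But your proposed mechanism for isolating a single factor is off, and one of your auxiliary assumptions is unnecessary and in fact not available.

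The paper does \emph{not} use a Vandermonde/interpolation over powers $\lambda_i^j$. Instead it uses B\'ezout. Fix $j$ with $|\bar s_j|>0$ and set $\ell=\ell(\bar s_j)$. Since the $\lambda_i$ are distinct, $f_j^{\,n_j+s_{j,0}}$ and $F_{\hat j}=\prod_{j'\neq j} f_{j'}^{\,n_{j'}+s_{j',0}}$ are coprime in $\cc[t]$, so there exist $g_j,g_{\hat j}\in\cc[t]$ with $g_j f_j^{\,n_j+s_{j,0}}+g_{\hat j}F_{\hat j}=f_j^{\ell}$. One then acts with the single element $g_{\hat j}F_{\hat j}\in\cc[t]$ on $f^{\mathcal D_j(\bar{\bar s})}v_0$. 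On the $j$-th slot this is $f_j^{\ell}-g_j f_j^{\,n_j+s_{j,0}}$, which produces $f_j^{\bar s_j}v_j$ plus a scalar multiple of $f_j^{\mathcal D(\bar s_j)}v_j$ plus a commutator; on each $j'\neq j$ slot it is divisible by $f_{j'}^{\,n_{j'}+s_{j',0}}$, hence acts as a scalar plus a commutator. The commutators are controlled by Lemma~\ref{lem:brack-tupleSize}, which says $[t^a f_{j'}^m, f_{j'}^{\bar r}]v_{j'}$ lies in the span of $|\bar r'|<|\bar r|$ once $m\ge n_{j'}+r_0$; this is precisely why the exponents in $F_{\hat j}$ and in $g_j f_j^{\,n_j+s_{j,0}}$ are chosen as they are. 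All error terms therefore have strictly smaller $|\bar{\bar s}|$, and the induction closes.

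Two specific issues with your plan. First, a Vandermonde in $\lambda_i^j$ would at best manufacture a polynomial with simple zeros at the $\lambda_{j'}$, but that is not enough: to invoke the commutator estimate you need divisibility by $f_{j'}^{\,n_{j'}+s_{j',0}}$, i.e.\ zeros of high order. Pushing your interpolation to confluent Vandermonde/Hermite is exactly reinventing the B\'ezout step, so you may as well use coprimality directly. Second, you say you need the scalars $\mu_i(t^j f_i^{n_i})$ to be nonzero so you can invert them; but the lemma carries no hypothesis on $\mu_i$ (it must hold even for $\mu_i=0$), and indeed the paper's argument never inverts any value of $\mu$. The leading term $f^{\bar{\bar s}}v_0$ comes with coefficient $1$ from $f_j^{\ell}\cdot f_j^{\mathcal D(\bar s_j)}v_j$, not from a $\mu$-value.
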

\begin{proof}
To prove this, it is enough to show that $f^{\bar {\bar s}} v_0 \in U(\cc [t]) v_0$ for any ${\bar {\bar s}}  \in \z_{\ge 0}^{n_0}$. We argue by induction on $|{\bar {\bar s}} |$, noting that the base case $|{\bar {\bar s}} |=0$ is trivial. Now let  ${\bar {\bar s}}  \in \z_{\ge 0}^{n_0}$ with $|{\bar {\bar s}} |>0$ and assume $f^{\bar {\bar s}'} v_0 \in U( \cc [t]) v_0$ for any ${\bar {\bar s}'} \in \z_{\ge 0}^{n_0}$ with $|{\bar {\bar s}'}|<|{\bar {\bar s}}|$.

Write ${\bar {\bar s}} = (\bar s_1, \ldots, \bar s_k)$.  Choose any $j \in \{ 1, \ldots , k \}$ such that $| \bar s_j |\neq 0$, and write $\ell=\ell(\bar s_j)$ as in Section \ref{subsec:basisNotation}.  Since the polynomials $f_j^{n_j + s_{j,0}}$ and $F_{\hat j}=\prod_{j' \neq j} f_{j'}^{n_{j'} + s_{j',0}}$ are relatively prime in $\cc[t]$, there exist $g_j, g_{\hat j} \in \cc[t]$ such that $g_j f_j^{n_j + s_{j,0}}+ g_{\hat j}F_{\hat j}=f_j^{\ell}$.  Let $\mathcal D_j ({\bar {\bar s}}) = (\bar s_0, \ldots, \mathcal D (\bar s_j), \ldots, \bar s_k)$.  Then 
\begin{align*}
 g_{\hat j}F_{\hat j} \cdot f^{\mathcal D_j ({\bar {\bar s}})} v_0 &= \sum_{j' \neq j}  \cdots \otimes g_{\hat j}F_{\hat j} \cdot f_{j'}^{\bar s_{j'}} v_{j'} \otimes \cdots  \nonumber \\
 &\quad +  \cdots \otimes (f_j^{\ell}-g_j f_j^{n_j + s_{j,0}}) \cdot f_{j}^{\mathcal D (\bar s_j)} v_j \otimes \cdots \nonumber \\
 &=\sum_{j' \neq j}  \cdots \otimes \mu_{j'}(g_{\hat j}F_{\hat j}) f_{j'}^{\bar s_{j'}} v_{j'} \otimes \cdots  +  \sum_{j' \neq j}\cdots \otimes [g_{\hat j}F_{\hat j}, f_{j'}^{\bar s_{j'}}] v_{j'} \otimes \cdots  \\
 &\quad - \cdots \otimes \mu_j(g_j f_j^{n_j +s_{j,0}}) f_{j}^{\mathcal D (\bar s_j)} v_j \otimes \cdots - \cdots \otimes [g_j f_j^{n_j+s_{j,0}}, f_{j}^{\mathcal D (\bar s_j)}] v_j \otimes \cdots  \\
 &\quad + \cdots \otimes f_j^{\ell} \cdot f_{j}^{D (\bar s_j)} v_j \otimes \cdots \\
 &= X + f^{\bar {\bar s}} v_0,
 \end{align*}
 where, using Lemma \ref{lem:brack-tupleSize},  $X \in {\rm span} \{ f^{\bar {\bar s}'} v_0 \mid |{\bar {\bar s}'}| <|{\bar {\bar s}}| \}$. It follows that $f^{\mathcal D_j ({\bar {\bar s}})} v_0, X  \in U(\cc[t]) v_0$ by the inductive hypothesis; this completes the proof. 
\end{proof}

The next two lemmas allow us to extend Lemma \ref{lem:cyclictensor} to a tensor product involving a cyclic $\Vir^+$-locally annihilated module.

\begin{lem} \label{lem:UUk}
Let $(k; \lambda_i, n_i, f_i; \mu_i, p_i, r_i)$ be as in Conventions \ref{conv2}. Suppose $L \in \z_{\ge 0}$, and define $\cc[t]_L= {\rm span} \{ t^j \mid j \geq L\}$. For any $w \in \bigotimes_{i=1}^k V_{\mu_i}^{f_i^{n_i}}$ and $h \in \cc[t]$, there exists $\tilde h \in \cc [t]_L$ such that $\tilde h w =h w$.
\end{lem}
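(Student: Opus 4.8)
The plan is to exploit the basic identity $t^j f^n v_\mu = \mu(t^j f^n) v_\mu$ for $j$ in the right range, together with the fact (Lemma \ref{lem:degreehom}) that $\mu(t^j f^m)$ is eventually a polynomial times $\lambda^j$, hence nonzero for all but finitely many $j$. The key observation is that for each fixed tensor factor $V_{\mu_i}^{f_i^{n_i}}$ and each fixed vector, the action of $\cc[t]$ factors through the finite-dimensional quotient by a large power of $\langle f_i^{n_i}\rangle$; more precisely, on any given $w$ one can replace a monomial $t^j$ by a suitable element of $\cc[t]_L$ that agrees with it. Concretely, I would first reduce to the case $h = t^j$ for a single $j \in \z$ (by linearity it suffices to treat monomials, and negative powers require the usual trick of writing $t^{-a} = \lambda^{-a}t^0 - \lambda^{-a}t^{-a}(t^a - \lambda^a)$ iteratively, but in fact it is cleanest to note that $t^j - \lambda^{j-j'}t^{j'} \in \langle f_1 \cdots f_k \rangle$ for any $j, j'$, so one can shift any monomial to a large positive power modulo the ideal generated by the product of the $f_i$'s).

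More precisely, here is the step I would carry out. Since $w$ lies in a finite tensor product and each $f_i^{\bar s_i} v_i$ involves only finitely many basis vectors, there is a single $N$ (depending on $w$ and all the $n_i, r_i, s_{i,0}$) such that for every $i$, multiplication by $t^j f_i^{m}$ annihilates the relevant component whenever $m$ is large enough and moreover $\mu_i(t^j f_i^{m})$ is a fixed polynomial in $j$ times $\lambda_i^j$. Then for any two integers $j, j'$ one has $t^j - \lambda_i^{\,j - j'} t^{j'} = \big(\sum \text{(monomial)}\big) f_i \in \langle f_i \rangle$, and by iterating, $t^j$ differs from a $\cc$-linear combination of $t^{j'}, t^{j'+1}, \dots, t^{j'+N}$ by an element of $\langle f_i^{\,N+1}\rangle$; choosing $j' = L$ we get $\tilde h_i \in \cc[t]_L$ with $\tilde h_i w = t^j w$ on the $i$-th factor... but since the $\lambda_i$ differ, I cannot use the same $\tilde h_i$ for all factors simultaneously. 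The correct fix is to work with the \emph{product} $F = f_1 f_2 \cdots f_k$: then $t^j - \lambda_i^{\,j-j'} t^{j'}$ is not divisible by $F$, so instead I use the Chinese Remainder Theorem / coprimality of the $f_i^{\,N+1}$ to build a single $\tilde h$ that reduces to the right thing modulo each $\langle f_i^{\,N+1}\rangle$.

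So the real argument is: fix $w$; choose $N$ large enough (using Lemma \ref{lem:brack-tupleSize} and Lemma \ref{lem:degreehom}) that for each $i$, the $\cc[t]$-action on the $V_{\mu_i}^{f_i^{n_i}}$-component of $w$ factors through $\cc[t]/(f_i^{\,N})\cc[t]$ — meaning $g w$ depends only on the residue of $g$ modulo $f_i^{\,N}$ in that component; then by the Chinese Remainder Theorem (the $f_i^{\,N}$ are pairwise coprime in $\cc[t]$), for $h \in \cc[t]$ and any prescribed shift target there is a polynomial $\tilde h$ congruent to $h$ modulo $f_i^{\,N}$ for every $i$ and, since we may also adjust by an arbitrary multiple of $F^{\,N} = \prod f_i^{\,N}$ (which kills all components of $w$), we can push the lowest-degree term of $\tilde h$ up past $t^{L}$ — i.e.\ arrange $\tilde h \in \cc[t]_L$. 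Then $\tilde h w = h w$ by construction. The main obstacle, and the point requiring care, is verifying that the action on a single vector really does factor through a finite quotient $\cc[t]/(f_i^{\,N})$ in each tensor component: this uses that $t^j f_i^{\,m} v_{\mu_i} = 0$ for $m$ beyond $n_i + r_i$ (Lemma \ref{lem:degreehom}) and that brackets $[t^j f_i^{\,m}, f_i^{\bar s}] v_{\mu_i}$ vanish for $m$ large (Lemmas \ref{lem:repRootPowerComp1}, \ref{lem:repRootPowerComp3}, \ref{lem:brack-tupleSize}), applied to each of the finitely many basis vectors appearing in $w$; once that is in place, the CRT bookkeeping is routine.
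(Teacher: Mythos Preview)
Your proposal is correct and follows essentially the same route as the paper: find a single polynomial $F=\prod_i f_i^{N_i}$ (with $N_i$ large enough, using Lemmas \ref{lem:repRootPowerComp1}, \ref{lem:repRootPowerComp3}) so that $\langle F\rangle w=0$, then use that $F$ is coprime to $t^L$ (since each $\lambda_i\neq 0$) to produce $\tilde h\in\cc[t]_L$ with $\tilde h\equiv h\pmod{F}$. The paper executes this last step more directly via B\'ezout---writing $h=g_1 t^L+g_2 F$ and taking $\tilde h=g_1 t^L$---which lets it bypass your CRT/component-by-component detour entirely.
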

\begin{proof}
Fix $h \in \cc [t]$.  By Lemmas \ref{lem:repRootPowerComp1} and \ref{lem:repRootPowerComp3}, we can choose $N_i \in \z_{\ge 1}$, $1 \leq i \leq k$, such that $\langle \prod_{i=1}^k f_i^{N_i} \rangle w=0$. Define $F=\prod_{i=1}^k f_i^{N_i}$.
Since $\lambda_i \neq 0$, the polynomials $t^L$ and $F$ are relatively prime in $\cc [t]$. Thus, there exist $g_1, g_2 \in \cc [t]$ such that $h = g_1 t^L + g_2F$.  It follows that 
$$hw = (g_1 t^L + g_2 F) w = (g_1 t^L) w.$$
Then $\tilde h:= g_1 t^L \in \cc[t]_L$. 
\end{proof}

\begin{lem} \label{lem:WVgen}
Let $(k; \lambda_i, n_i, f_i; \mu_i, p_i, r_i)$ be as in Conventions \ref{conv2}; and
 suppose $V$ is a cyclic $\Vir^+$-locally annihilated module with generator $v \in V$.  Then the module $\bigotimes_{i=1}^k V^{f_i^{n_i}}_{\mu_i} \otimes V$ is cyclic with generator $v_0 \otimes v$.
\end{lem}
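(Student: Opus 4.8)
The plan is to show that the submodule $U(\Vir)(v_0 \otimes v)$ contains every vector of the form $f^{\bar{\bar s}} v_0 \otimes v'$ for $\bar{\bar s} \in \z_{\ge 0}^{n_0}$ and $v' \in V$; since $V$ is cyclic with generator $v$, the vectors $f^{\bar{\bar s}} v_0 \otimes v'$ with $v'$ ranging over a spanning set of $V$ span all of $\bigotimes_{i=1}^k V^{f_i^{n_i}}_{\mu_i} \otimes V$, so this suffices. The key tool is that on the tensor product, $t^j = e_j$ acts by the Leibniz rule: $e_j(w \otimes v') = (e_j w) \otimes v' + w \otimes (e_j v')$. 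The idea is to produce the action of $\cc[t]$ on the first tensor factor while contributing ``error terms'' of a controlled form on the $V$-factor, and then use that $V$ is $\Vir^+$-locally annihilated to kill those error terms.

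First I would fix $\bar{\bar s}$ and $v' \in V$, and choose $L \in \z_{\ge 0}$ so that $\Vir_{\ge L} v' = 0$, which is possible since $V$ is $\Vir^+$-locally annihilated. By Lemma \ref{lem:cyclictensor}, there is some $h \in U(\cc[t])$ with $h v_0 = f^{\bar{\bar s}} v_0$; writing $h$ as a sum of monomials $t^{j_1}\cdots t^{j_m}$, I would like to replace each such monomial by one all of whose exponents are $\ge L$. This is exactly where Lemma \ref{lem:UUk} (together with an obvious iteration) comes in: for the element $w = v_0 \otimes v' \in \bigotimes V_{\mu_i}^{f_i^{n_i}} \otimes V$ — or more carefully, for $w = v_0 \in \bigotimes V_{\mu_i}^{f_i^{n_i}}$ — one finds $\tilde h \in \cc[t]_L$ with $\tilde h v_0 = h v_0 = f^{\bar{\bar s}} v_0$. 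Then in the tensor product, acting by $\tilde h$ on $v_0 \otimes v'$: every monomial of $\tilde h$ has all exponents $\ge L$, so when the Leibniz rule distributes such a monomial across the two factors, every term that places at least one $e_j$ (with $j \ge L$) on the $V$-factor dies because $\Vir_{\ge L} v' = 0$; the only surviving term is the one that applies the entire monomial to the first factor. Hence $\tilde h(v_0 \otimes v') = (\tilde h v_0) \otimes v' = f^{\bar{\bar s}} v_0 \otimes v'$, which therefore lies in $U(\Vir)(v_0 \otimes v')$; and since $v' \in U(\Vir)v$, in fact $v_0 \otimes v' \in U(\Vir)(v_0 \otimes v)$ gives $f^{\bar{\bar s}} v_0 \otimes v' \in U(\Vir)(v_0 \otimes v)$.

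One wrinkle to handle cleanly: to run the previous paragraph I actually want to first know $v_0 \otimes v' \in U(\Vir)(v_0 \otimes v)$ for every $v' \in V$, and only then deduce $f^{\bar{\bar s}} v_0 \otimes v' \in U(\Vir)(v_0 \otimes v)$. So I would split the argument: (1) show $\{v_0 \otimes v' : v' \in V\} \subseteq U(\Vir)(v_0\otimes v)$; then (2) apply the $\tilde h$-trick to promote each $v_0 \otimes v'$ to all $f^{\bar{\bar s}} v_0 \otimes v'$. For step (1), since $V = U(\Vir)v$, it is enough to show that applying any PBW monomial in $U(\Vir)$ to $v$ can be lifted: given $x \in U(\Vir)$ with $xv = v'$, I would apply the same element $x$ to $v_0 \otimes v$ and move the resulting first-factor contributions back onto $v_0 \otimes v$ inductively — but this threatens circularity with step (2). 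The cleaner route, which I would actually take, is to do a single simultaneous induction on $|\bar{\bar s}|$: the base case $|\bar{\bar s}| = 0$ is step (1), handled by observing that $e_j$ acting on $v_0 \otimes v$ for $j$ large gives $v_0 \otimes (e_j v)$ plus terms with smaller/no first-factor content (here $e_j v_0 = \mu$-scalars times $v_0$ up to lower terms once $j$ is large, by Lemmas \ref{lem:repRootPowerComp1} and \ref{lem:repRootPowerComp3}), so $v_0 \otimes (e_j v) \in U(\Vir)(v_0\otimes v)$, and iterating covers all of $v_0 \otimes V$; the inductive step is precisely the $\tilde h$-argument above, where Lemma \ref{lem:brack-tupleSize} controls the lower-order terms. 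I expect the main obstacle to be exactly this bookkeeping — arranging the induction so that the ``Leibniz error terms'' on the $V$-factor are killed by local annihilation while the ``straightening error terms'' on the $\bigotimes V_{\mu_i}$-factor are absorbed by the inductive hypothesis — rather than any single hard computation; all the genuinely computational content is already packaged in Lemmas \ref{lem:cyclictensor}, \ref{lem:UUk}, and \ref{lem:brack-tupleSize}.
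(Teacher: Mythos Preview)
Your second paragraph captures the key computation correctly and matches the paper: Lemmas \ref{lem:cyclictensor} and \ref{lem:UUk} (iterated) show that $\bigotimes_i V^{f_i^{n_i}}_{\mu_i}$ is generated over $U(\cc[t]_L)$ by $v_0$ for any $L$, and then Leibniz plus $\Vir_{\ge L}$-annihilation gives $f^{\bar{\bar s}} v_0 \otimes v' \in U(\Vir)(v_0 \otimes v')$ whenever $\Vir_{\ge L} v' = 0$.

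The gap is in your third paragraph. Your base case for the induction on $|\bar{\bar s}|$ --- getting $v_0 \otimes V \subseteq M$ by claiming ``$e_j v_0 = \mu$-scalars times $v_0$ up to lower terms once $j$ is large'' --- does not work. Lemmas \ref{lem:repRootPowerComp1} and \ref{lem:repRootPowerComp3} concern $t^j f^m$ with $m \ge n$, not $t^j$ itself; for every $j$, the element $t^j v_0$ has nontrivial components with $|\bar{\bar s}| = 1$. And for $j$ large enough that $\Vir_{\ge L} v = 0$, the term $v_0 \otimes (e_j v)$ you want to isolate is zero. So there is no choice of $j$ that makes the base case go through, and the circularity you flagged is not resolved by inducting on $|\bar{\bar s}|$ alone.

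The paper sidesteps this by reversing your two steps. First apply your $\tilde h$-argument with $v' = v$ (trivially available since $v_0 \otimes v \in M$) to obtain $W \otimes v \subseteq M$, where $W = \bigotimes_i V^{f_i^{n_i}}_{\mu_i}$. Then set $X = \{ v' \in V : W \otimes v' \subseteq M \}$ and observe that $X$ is a $\Vir$-submodule of $V$: for $v' \in X$, $x \in \Vir$, and any $w \in W$, one has $w \otimes (xv') = x(w \otimes v') - (xw) \otimes v' \in M$. Since $v \in X$ and $V$ is cyclic, $X = V$. This one-line submodule argument replaces your attempted induction entirely; no appeal to Lemma \ref{lem:brack-tupleSize} or to the internal structure of $t^j v_0$ is needed.
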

\begin{proof}
Let $M = U(\Vir) (v_0 \otimes v)$, and let $L\in \z_{\ge 0}$ such that $\Vir_{\geq L} v=0$.  Lemmas \ref{lem:cyclictensor} and \ref{lem:UUk} imply that $\bigotimes_{i=1}^k V^{f_i^{n_i}}_{\mu_i}$ is generated by $v_0$ as a $\cc[t]_{\geq L}$-module. It follows that $\bigotimes_{i=1}^k V^{f_i^{n_i}}_{\mu_i} \otimes v \subseteq M$.  

Let $X$ be the maximal subspace of $V$ such that $\bigotimes_{i=1}^k V^{f_i^{n_i}}_{\mu_i} \otimes X \subseteq M$. Note that $X$ must be a $\Vir$-submodule of $V$. Since $v \in X$ generates $V$, it follows that $X=V$. Thus, $\bigotimes_{i=1}^k V^{f_i^{n_i}}_{\mu_i} \otimes V = M$. 
\end{proof}

\begin{thm} \label{thm:tensorsimpleV}
Let $(k; \lambda_i, n_i, f_i; \mu_i, p_i, r_i)$ be as in Conventions \ref{conv2} and suppose $V$ is a $\Vir^+$-locally annihilated module. Then the module 
$$
\bigotimes_{i=1}^k V_{\mu_i}^{(t-\lambda_i)^{n_i}} \otimes V
$$ 
is simple if and only if both $V$ is simple and $r_i \geq n_i-2$ for each $1 \leq i \leq k$.
\end{thm}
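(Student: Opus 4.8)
The plan is to prove both directions of the equivalence, starting with the easier ``only if'' direction and then tackling the substantive ``if'' direction.

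\medskip

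\textbf{The ``only if'' direction.} Suppose the tensor product is simple. If some $r_i \leq n_i - 3$ (equivalently $r_i < n_i - 2$), then Proposition \ref{prop:notsimplesmalldegree} produces a proper submodule $M$ of $V_{\mu_i}^{(t-\lambda_i)^{n_i}}$; I would then check that $M \otimes \bigotimes_{j \neq i} V_{\mu_j}^{(t-\lambda_j)^{n_j}} \otimes V$ (tensoring in the appropriate slot) is a proper nonzero $\Vir$-submodule of the whole tensor product, contradicting simplicity. This requires knowing that a submodule in one tensor factor need not give a submodule of the tensor product in general, so the argument should instead run through the quotient: $V_{\mu_i}^{(t-\lambda_i)^{n_i}}/M \cong V_{\mu_i'}^{(t-\lambda_i)^{n_i-1}}$ is a quotient, hence the tensor product surjects onto $\bigotimes_{j\neq i} V_{\mu_j}^{(t-\lambda_j)^{n_j}} \otimes V_{\mu_i'}^{(t-\lambda_i)^{n_i-1}} \otimes V$, and a simple module cannot properly surject onto a nonzero module unless the surjection is an isomorphism, which fails by comparing bases (the quotient is strictly smaller). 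Similarly, if $V$ is not simple, pick a proper nonzero submodule $W \subsetneq V$; then $\bigotimes_i V_{\mu_i}^{(t-\lambda_i)^{n_i}} \otimes W$ is a proper nonzero submodule. So simplicity forces $V$ simple and all $r_i \geq n_i - 2$.

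\medskip

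\textbf{The ``if'' direction.} Assume $V$ is simple and $r_i \geq n_i - 2$ for all $i$. By Lemma \ref{lem:WVgen}, the tensor product is cyclic, generated by $v_0 \otimes v$; so it suffices to show every nonzero submodule $M$ contains $v_0 \otimes v$. The strategy is a two-phase ``peeling'' argument using the lexicographic order. Take a nonzero $w \in M$ and write it in the basis $\{ f^{\bar{\bar s}} v_0 \otimes x\}$, or more precisely as $w = \sum_{\bar{\bar s}} f^{\bar{\bar s}} v_0 \otimes x_{\bar{\bar s}}$ with $x_{\bar{\bar s}} \in V$; let $\bar{\bar u}$ be the lexicographically largest index with $x_{\bar{\bar u}} \neq 0$. \emph{Phase 1: reduce the $\Vir^f$-tuple degree.} Using the operators $t^j f_i^{m}$ with $m$ chosen as in Lemma \ref{lem:reducedegree} (for the appropriate block $i$ and with $\ell = \ell(\bar s_i)$ or $s_{i,0}$), apply $t^j f_i^m - \mu_i(t^j f_i^m)$ to $w$; by Lemma \ref{lem:reducedegree}(i)--(ii) this strictly decreases $\bar{\bar u}$ while remaining in $M$ (for all but finitely many $j$, so we have room to choose $j$ satisfying any finitely many constraints). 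The subtle point is that $t^j f_i^m$ acts on the tensor factor $V$ as well, via the coproduct; but since $V$ is $\Vir^+$-locally annihilated, Lemma \ref{lem:UUk} lets us replace $t^j f_i^m$ by an element of $\cc[t]_L$ annihilating the relevant finite-dimensional piece of $V$ containing all the $x_{\bar{\bar s}}$, so effectively the operator acts only on $\bigotimes_i V_{\mu_i}^{(t-\lambda_i)^{n_i}}$ (tensored with identity on $V$). Iterating, we reach $0 \neq w' \in M$ of the form $v_0 \otimes x$ for some $0 \neq x \in V$. \emph{Phase 2: use simplicity of $V$.} Now $v_0 \otimes V' \subseteq M$ where $V' = U(\Vir) x$-type considerations; more carefully, acting by $\Vir_{\geq L'}$-type elements (which kill $v_0$ up to scalars? — no, they act on $v_0$ nontrivially) — here I would instead argue: the span of $\{ y \in V : v_0 \otimes y \in M\}$ is stable under the subalgebra of operators acting trivially on $v_0$, but more robustly, use that $M \supseteq U(\Vir)(v_0 \otimes x)$ and run a symmetric peeling on the $V$-side combined with Lemma \ref{lem:cyclictensor} to show $v_0 \otimes v \in M$; since $V$ is simple, $U(\Vir)x = V$, and one transfers this through the tensor structure using again that $\Vir^+$-locally annihilated operators let us decouple the actions. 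Finally $v_0 \otimes v \in M$ gives $M = $ whole module.

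\medskip

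\textbf{Main obstacle.} The hard part is Phase 2 and, throughout, the careful bookkeeping of how elements of $\Vir$ act across the tensor product via the coproduct: an operator $t^j f_i^m$ does not act ``in a single slot,'' so all the clean single-module computations (Lemmas \ref{lem:repRootPowerComp1}, \ref{lem:repRootPowerComp3}, \ref{lem:reducedegree}) must be leveraged only after using the $\Vir^+$-local annihilation (Lemma \ref{lem:UUk}) to arrange that the chosen operator annihilates the finitely many $V$-components appearing in the current element. Making this rigorous — tracking which finite-codimensional subalgebra of $\Vir^+$ annihilates all relevant vectors at each stage, and confirming that the ``all but finitely many $j$'' freedom in Lemma \ref{lem:reducedegree} survives intersection with the finitely many extra constraints — is where the real work lies; the lexicographic induction itself is then a formality.
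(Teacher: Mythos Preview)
Your overall architecture matches the paper's, but there is a genuine gap in your Phase~1 when $k \geq 2$: you have not handled the cross-factor action among the modules $V_{\mu_1}^{f_1^{n_1}}, \ldots, V_{\mu_k}^{f_k^{n_k}}$. Concretely, suppose $\bar{\bar u} = (\bar u_1, \ldots, \bar u_k)$ with $i_0$ minimal such that $|\bar u_{i_0}| > 0$, and you apply $t^j f_{i_0}^m - \mu_{i_0}(t^j f_{i_0}^m)$. Lemma~\ref{lem:reducedegree} controls only what happens inside the $i_0$-th factor; on the factors with $i' \neq i_0$, the element $t^j f_{i_0}^m$ is \emph{not} in $\langle f_{i'}^{n_{i'}} \rangle$, so $t^j f_{i_0}^m \cdot v_{i'}$ is generally not a scalar multiple of $v_{i'}$. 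For $i' < i_0$ this produces terms with nonzero $i'$-block, which are \emph{lexicographically larger} than $\bar{\bar u}$, and your reduction collapses. Your sketch flags only the $V$-factor as the subtle point and invokes Lemma~\ref{lem:UUk} to handle it, but Lemma~\ref{lem:UUk} does nothing about these cross-factor terms.

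The paper repairs this with a relative-primality trick: choose $N_{i'}$ large enough that $\langle f_{i'}^{N_{i'}} \rangle$ annihilates all relevant basis vectors in the $i'$-th factor, set $F_{\hat i_0} = \prod_{i' \neq i_0} f_{i'}^{N_{i'}}$, and write $f_{i_0}^m = g_{i_0} f_{i_0}^{N_{i_0}} + g_{\hat i_0} F_{\hat i_0}$ in $\cc[t]$. One then acts by $t^j g_{\hat i_0} F_{\hat i_0} - \mu_{i_0}(t^j f_{i_0}^m)$: the $F_{\hat i_0}$ factor kills all $i' \neq i_0$ slots, on the $i_0$-slot it agrees with $t^j f_{i_0}^m$ modulo something in $\langle f_{i_0}^{N_{i_0}} \rangle$ (which also vanishes), and choosing $j$ large puts it in $\Vir_{\geq L}$ to handle~$V$. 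This is the missing idea; once you have it, Lemma~\ref{lem:reducedegree} applies cleanly and the lex induction goes through.

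Two smaller comments. Your ``only if'' worry is unfounded: for Lie-algebra modules, if $M \subseteq W$ is a submodule then $M \otimes W'$ is a submodule of $W \otimes W'$, since $x(m \otimes w') = xm \otimes w' + m \otimes xw' \in M \otimes W'$; the quotient detour works but is unnecessary. And your Phase~2 can be eliminated entirely: once Phase~1 yields $0 \neq v_0 \otimes x \in M$, simplicity of $V$ means $x$ generates $V$, so Lemma~\ref{lem:WVgen} already gives that $v_0 \otimes x$ generates the whole tensor product --- no further peeling on the $V$-side is needed.
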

\begin{proof}
If $r_i < n_i-2$ for some $1 \leq i \leq k$, then $V_{\mu_i}^{f_i^{n_i}}$ is not simple by Proposition \ref{prop:notsimplesmalldegree}. If any factor in the tensor product is not simple, clearly the resulting tensor product is not simple. This completes the ``only if" direction.

Now suppose $V$ is simple and $r_i \geq n_i-2$ for each $1 \leq i \leq k$. Since $V$ is simple, it follows that $V$ is cyclic and generated by any $0 \neq v \in V$. Thus Lemma \ref{lem:WVgen} implies that $\bigotimes_{i=1}^k V_{\mu_i}^{f_i^{n_i}} \otimes V$ is generated by $v_0 \otimes v$.

Let $0 \neq w \sum_{\bar {\bar s} \in \z_{\ge 0}^{n_0}} f^{\bar {\bar s}} v_0 \otimes v_{\bar {\bar s}} \in \bigotimes_{i=1}^k V_{\mu_i}^{f_i^{n_i}} \otimes V$; and define $\Lambda(w) = {\bar {\bar u}}$ where ${\bar {\bar u}}$ is maximal (ordered lexicographically) such that $v_{{\bar {\bar u}}}\neq 0$.  To argue that $w$ generates the full module, it is enough to show that if $w \in \left( \bigotimes_{i=1}^k V_{\mu_i}^{f_i^{n_i}} \otimes V \right) \setminus v_0 \otimes V$, there is $0 \neq w' \in  U(\Vir) w$ such that $\Lambda(w') \prec \Lambda(w)$.  

Let $w = \sum_{\bar {\bar s} \in \z_{\ge 0}^{n_0}} f^{\bar {\bar s}} v_0 \otimes v_{\bar {\bar s}} \in \left( \bigotimes_{i=1}^k V_{\mu_i}^{f_i^{n_i}} \otimes V \right) \setminus v_0 \otimes V$, and set $\Lambda(w)=\bar {\bar u}=(\bar u_1, \ldots, \bar u_k)$. 
Choose $i_0$ minimal such that $| \overline{u}_{i_0}|>0$.  For each $1 \leq i \leq k$, we may apply Lemmas \ref{lem:degreehom}, \ref{lem:repRootPowerComp1} and \ref{lem:repRootPowerComp3} to choose $N_i \in \z_{\ge 1}$ such that, for all $\bar {\bar s}= (\bar s_1, \ldots, \bar s_k)$ with $v_{\bar {\bar s}} \neq 0$, we have $\langle f^{N_i} \rangle ( f_i^{\bar s_i} v_{\mu_i})=[\langle f^{N_i} \rangle, f_i^{\bar s_i}] v_{\mu_i}=0$.   
  Define $F_{i_0} = f_{i_0}^{N_{i_0}}$ and $F_{\hat i_0} = \prod_{i\neq i_0} f_{i}^{N_{i}}$. Now choose $m \in \z$ to satisfy the claims of Lemma \ref{lem:reducedegree} with respect to $f^{\overline{u}_{i_0}} v_{i_0}$.  Since $F_{i_0}$ and $F_{\hat i_0}$ are relatively prime in $\cc[t]$, there are $g_{i_0}, g_{\hat i_0} \in \cc[t]$ such that $f_{i_0}^{m}= g_{i_0} F_{i_0} + g_{\hat i_0} F_{\hat i_0}$. 

Since only finitely many $v_{{\bar {\bar s}}}$ are nonzero, we may choose $L\in \z_{\ge 0}$ such that $ \Vir_{\geq L} v_{{\bar {\bar s}}} =0$ for all corresponding ${\bar {\bar s}} \in \z_{\ge 0}^{n_0}$. By Lemma \ref{lem:reducedegree}, there is $j \in \z$ such that  $(t^j f^{m}_{i_0} - \mu_{i_0} (t^j f^{m}_{i_0})) f^{\overline{s}_{i_0}} v_{i_0} \neq 0$ and  $t^j g_{\hat i_0} F_{\hat i_0} \in \Vir_{\geq L}$. Write $h=t^jf_{i_0}^{m}$. Since $t^j g_{\hat i_0} F_{\hat i_0} v_{{\bar {\bar s}}}=0$ for all ${\bar {\bar s}} \in \z_{\ge 0}^{n_0}$, we have
\begin{align}
(t^j g_{\hat i_0} F_{\hat i_0} -\mu_{i_0}(h)) w&= -\mu_{i_0}(h) w  + \sum_{\bar {\bar s}}  \sum_{i =1}^k \cdots \otimes t^j g_{\hat i_0}  F_{\hat i_0} \cdot f_i^{\bar s_i} v_{\mu_i} \otimes \cdots \otimes v_{\bar {\bar s}}\nonumber \\
&=\sum_{\bar {\bar s}}  \sum_{i \neq i_0 } \cdots \otimes t^j g_{\hat i_0} F_{\hat i_0} \cdot f_i^{\bar s_i} v_{\mu_i} \otimes \cdots \otimes v_{\bar {\bar s}} \label{eqn:thm1} \\
&+ \sum_{\bar {\bar s}} \cdots \otimes (-t^j g_{i_0} F_{i_0} ) \cdot f_{i_0}^{\bar s_{i_0}} v_{i_0} \otimes \cdots \otimes v_{\bar {\bar s}} \label{eqn:thm2} \\
&+ \sum_{\bar {\bar s} \prec \bar {\bar u}} \cdots \otimes (h-\mu_{i_0}(h) ) \cdot f_{i_0}^{\bar s_{i_0}} v_{i_0} \otimes \cdots \otimes v_{\bar {\bar s}} \label{eqn:thm3} \\
&+  \cdots \otimes (h-\mu_{i_0}(h) ) \cdot f_{i_0}^{\bar u_{i_0}} v_{i_0} \otimes \cdots \otimes v_{\bar {\bar u}} \label{eqn:thm4}
\end{align}
Then (\ref{eqn:thm1}) and (\ref{eqn:thm2}) are zero by the definition of $F_{\hat i_0}$ and $F_{i_0}$. By Lemma \ref{lem:reducedegree} and our choice of $m_0$ and $j$, (\ref{eqn:thm4})  is nonzero and with maximal term indexed by $(0, \ldots, 0, \mathcal D (\bar u_{i_0}), \bar u_{i_0+1}, \ldots, \bar u_k)$. Lemma \ref{lem:reducedegree}, and the choice of $\bar {\bar u}$, also imply that the maximal term in (\ref{eqn:thm3}) is strictly smaller than (\ref{eqn:thm4}). This completes the proof.
\end{proof}

\begin{cor}\label{cor:tensIndPoly}
Let $(k; \lambda_i, n_i, f_i; \mu_i, p_i, r_i)$ be as in Conventions \ref{conv2}.  Then the module 
$$
\bigotimes_{i=1}^k V_{\mu_i}^{(t-\lambda_i)^{n_i}}
$$ 
is simple if and only if $r_i \geq n_i-2$ for each $1 \leq i \leq k$.
\end{cor}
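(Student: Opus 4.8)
The plan is to deduce Corollary \ref{cor:tensIndPoly} directly from Theorem \ref{thm:tensorsimpleV} by making an appropriate choice of the module $V$. Specifically, I would take $V$ to be the trivial one-dimensional $\Vir$-module $\cc$, on which every $e_j$ and $z$ act as zero. This module is obviously simple, and it is $\Vir^+$-locally annihilated (indeed $\Vir_{\geq 0}$ annihilates it), so it satisfies all the hypotheses of Theorem \ref{thm:tensorsimpleV}.

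The key step is then the canonical isomorphism of $\Vir$-modules
\[
\bigotimes_{i=1}^k V_{\mu_i}^{(t-\lambda_i)^{n_i}} \otimes \cc \;\cong\; \bigotimes_{i=1}^k V_{\mu_i}^{(t-\lambda_i)^{n_i}},
\]
given by $w \otimes 1 \mapsto w$; this is a standard fact about tensoring with the trivial module and is immediate from the coproduct on $U(\Vir)$. Under this identification, the tensor product $\bigotimes_{i=1}^k V_{\mu_i}^{(t-\lambda_i)^{n_i}} \otimes V$ of Theorem \ref{thm:tensorsimpleV} is exactly the module $\bigotimes_{i=1}^k V_{\mu_i}^{(t-\lambda_i)^{n_i}}$ appearing in the corollary. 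Theorem \ref{thm:tensorsimpleV} then says this module is simple if and only if $V = \cc$ is simple (which it always is) and $r_i \geq n_i - 2$ for each $1 \leq i \leq k$. The first condition being automatic, we are left with precisely the stated equivalence.

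I do not anticipate any real obstacle here: the only mild point of care is confirming that $\cc$ is a legitimate choice of $V$ (simplicity and $\Vir^+$-local annihilation are both trivial) and that the isomorphism above is genuinely $\Vir$-equivariant, which follows because $z$ and all $e_j$ act by zero on the $\cc$ factor so the Leibniz-type action on the tensor product collapses to the action on the first factor. Thus the proof is a two-line reduction.

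\begin{proof}
Let $V = \cc$ be the trivial one-dimensional $\Vir$-module, on which $z$ and every $e_j$ act as zero. Then $V$ is simple, and $\Vir_{\geq 0} V = 0$, so $V$ is $\Vir^+$-locally annihilated. The map $w \otimes 1 \mapsto w$ gives a $\Vir$-module isomorphism $\bigotimes_{i=1}^k V_{\mu_i}^{(t-\lambda_i)^{n_i}} \otimes \cc \cong \bigotimes_{i=1}^k V_{\mu_i}^{(t-\lambda_i)^{n_i}}$, since the $\Vir$-action on the trivial tensor factor is zero. Applying Theorem \ref{thm:tensorsimpleV} with this choice of $V$, and noting that $V = \cc$ is always simple, we conclude that $\bigotimes_{i=1}^k V_{\mu_i}^{(t-\lambda_i)^{n_i}}$ is simple if and only if $r_i \geq n_i - 2$ for each $1 \leq i \leq k$.
\end{proof}
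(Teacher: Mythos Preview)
Your proposal is correct and matches the paper's intended argument: the corollary is stated without proof immediately after Theorem \ref{thm:tensorsimpleV}, and later (in the proof of Corollary \ref{cor:genfsimple}) the paper explicitly invokes Theorem \ref{thm:tensorsimpleV} ``taking $V$ to be the trivial module,'' which is exactly your reduction.
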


We now consider when two such tensor products are isomorphic.

\begin{prop}\label{prop:whenIsomorphic}
Let $\lambda_1, \ldots, \lambda_k \in \cc^\times$ be distinct, $n_1, \ldots, n_k \in \z_{\ge 1}$, and $f_i= t-\lambda_i$. For each $1 \leq i \leq k$, let $\mu_i: \Vir^{f_i^{n_i}} \rightarrow \cc$ be a homomorphism. Similarly, let $\gamma_1, \ldots, \gamma_\ell \in \cc^\times$ be distinct, $m_1, \ldots, m_\ell \in \z_{\ge 1}$, and $g_i= t-\gamma_i$. For each $1 \leq i \leq \ell$, let $\theta_i: \Vir^{g_i^{m_i}} \rightarrow \cc$ be a homomorphism. Finally, let $V, W$ be a simple $\Vir^+$-locally annihilated $\Vir$ modules.  Then 
$$
\bigotimes_{i=1}^k V_{\mu_i}^{f_i^{n_i}} \otimes V \cong \bigotimes_{i=1}^\ell V_{\theta_i}^{g_i^{m_i}} \otimes W
$$
if and only if $V \cong W$, $k=\ell$, and after possibly renumbering, $\lambda_i = \gamma_i$, $n_i = m_i$, and $\mu_i = \gamma_i$ for all $i$. 
\end{prop}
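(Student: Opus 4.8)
The plan is to extract intrinsic invariants of the tensor product module $\bigotimes_{i=1}^k V_{\mu_i}^{f_i^{n_i}} \otimes V$ that pin down each of the data $V$, $k$, the $\lambda_i$, the $n_i$, and the $\mu_i$. The "if" direction is immediate: an isomorphism of the defining data induces an isomorphism of the modules. For the "only if" direction, the key observation is that the $\cc[t^\pm]$-module structure on each factor $V_{\mu_i}^{f_i^{n_i}}$ is ``supported at $\lambda_i$'' in a precise sense, so the tensor product carries a $\cc[t^\pm]$-action whose generalized eigenspace decomposition (for the commuting operators $t^j$, or better for the localization of $\cc[t^\pm]$) recovers the multiset $\{\lambda_1,\dots,\lambda_k\}$ together with the filtration data encoding the $n_i$. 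First I would make precise, using Lemmas \ref{lem:repRootPowerComp1} and \ref{lem:repRootPowerComp3}, that for each $i$ and each $w \in V_{\mu_i}^{f_i^{n_i}}$ there is $N$ with $\langle f_i^N\rangle w = 0$; hence on $\bigotimes_{i=1}^k V_{\mu_i}^{f_i^{n_i}}$ every vector is annihilated by $\langle \prod_i f_i^{N_i}\rangle$ for suitable $N_i$, i.e.\ the $\cc[t^\pm]$-action factors through $\varprojlim \cc[t^\pm]/\langle \prod_i f_i^{N_i}\rangle \cong \prod_i \cc[t]_{(\lambda_i)}^{\wedge}$, the product of completed local rings. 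Tensoring with $V$, the submodule $\bigotimes_i V_{\mu_i}^{f_i^{n_i}} \otimes v$ (any $0\neq v\in V$) is identified as the subspace of ``$\cc[t^\pm]$-locally finite, supported away from $0$'' vectors once one also uses that $V$ is $\Vir^+$-locally annihilated: I would argue this subspace is a $\Vir$-submodule invariant, independent of presentation.

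Next I would recover $V$. Using Lemma \ref{lem:WVgen} the whole module is cyclic on $v_0\otimes v$; more usefully, I expect that $V$ can be characterized as a subquotient: the ``$\cc[t^\pm]$-torsion-free part'' or, dually, the image of the module under killing the locally-$f_i$-nilpotent vectors. Concretely, let $T$ be the span of all vectors killed by some $\langle \prod_i f_i^{N_i}\rangle$; this is a $\Vir$-submodule (it is $\cc[t^\pm]$-stable and $\Vir$ acts through $\cc[t^\pm]$ on the relevant pieces up to the locally-annihilated shift — this needs care), and I would show the quotient is isomorphic to $V$, while $V$ is recovered functorially. Alternatively, following the spirit of Proposition \ref{prop:notsimplesmalldegree}, peel off $V_{\mu_i}^{f_i^{n_i}}$ factors one at a time via the submodule generated by a top power $f_i^{n_i-1}$ in the appropriate tensor slot, reducing to $V$ after $k$ steps; then $V\cong W$ and $\ell = k$ after renumbering must match slot by slot.

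Finally, with $V\cong W$, $k=\ell$ fixed and the $\lambda_i=\gamma_i$ matched (as the support points of the $\cc[t^\pm]$-action, with multiplicity-of-filtration recovering $n_i=m_i$ from the length of the local-ring filtration on the $\lambda_i$-part), it remains to recover $\mu_i$. For this I would localize/complete at $\lambda_i$, which kills all other factors and $V$ (since $V$ is $\Vir^+$-locally annihilated and $\lambda_i\neq 0$, a suitable element of $\langle f_j^{N_j}\rangle$ for $j\neq i$ together with a high power of $t$ acts invertibly or nilpotently so as to isolate the $i$th factor), reducing to the statement that $V_{\mu_i}^{f_i^{n_i}}\cong V_{\theta_i}^{g_i^{m_i}}$ as $\Vir^{f_i^{n_i}}$-modules forces $\mu_i=\theta_i$ — which follows since $v_{\mu_i}$ is, up to scalar, the unique vector (in the $n=1$ case via Proposition \ref{thm:t-1_simple}-type reasoning, in general via the basis of Lemma \ref{lem:inducedbases} and the lexicographically-minimal vector) on which $\Vir^{f_i^{n_i}}$ acts by a scalar, and that scalar is $\mu_i$. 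The main obstacle I anticipate is the very first step: rigorously isolating the $i$th tensor factor inside the tensor product as an invariant subquotient, because $\Vir$ does not act factor-wise and the $\Vir^+$-locally-annihilated hypothesis on $V$ must be leveraged carefully (via the relative primality of $t^L$ and $\prod f_i^{N_i}$, as in Lemmas \ref{lem:UUk} and \ref{lem:WVgen}) to show that the $\cc[t^\pm]$-generalized-eigenspace data is actually $\Vir$-module-theoretic and not just $\cc[t^\pm]$-module-theoretic; getting the bookkeeping of which vectors are annihilated by which ideals, uniformly over the finitely many nonzero components of a general element, is where the real work lies.
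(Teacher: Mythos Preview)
Your overall intuition---that the multiset $\{\lambda_i\}$, the multiplicities $n_i$, and the characters $\mu_i$ should be recoverable from something like the ``$\cc[t^\pm]$-support'' of the module---is correct in spirit, and the paper's proof is essentially a concrete implementation of exactly this idea. But your proposal, as written, has a genuine gap at the point you yourself flag: the tensor product is only a $\Vir$-module, with $\Vir$ acting by the coproduct, so there is no $\cc[t^\pm]$-module structure on which to run completion, localization, or generalized-eigenspace arguments. In particular, your subspace $T$ (vectors killed by some $\langle \prod f_i^{N_i}\rangle$) is not evidently a $\Vir$-submodule, since applying $e_j$ to such a vector spreads across all tensor factors including $V$; and the ``peel off $f_i^{n_i-1}$'' alternative via Proposition~\ref{prop:notsimplesmalldegree} is only available in the non-simple regime $r_i\le n_i-3$, precisely the opposite of the interesting case. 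So neither of your two routes to recovering $V$ as an intrinsic subquotient actually goes through.

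The paper avoids this by never trying to build module-level invariants. Instead it works element-by-element on the image $\phi(v_0\otimes v)$ of the canonical generator. The key move---which is the rigorous form of your localization idea---is a Chinese-remainder/B\'ezout trick: to probe the $i_0$-th factor alone, one writes $f_{i_0}^m = h_1 F_{i_0} + h_2 F_{\hat i_0}$ with $F_{i_0}$ a high power of $f_{i_0}$ and $F_{\hat i_0}$ a product of high powers of the other $f_i$'s (and a factor $t^L$ to kill the $\Vir^+$-locally-annihilated piece $V$). Acting by $t^j h_2 F_{\hat i_0}$ then annihilates every tensor slot except the $i_0$-th one, where it acts as $t^j f_{i_0}^m$; comparing what this does on the two sides of $\phi$ forces first $\{\lambda_i\}=\{\gamma_i\}$, then $n_i=m_i$ and $\mu_i=\theta_i$ (via Lemma~\ref{lem:reducedegree}), and finally shows $\phi(v_0\otimes v)\in w_0\otimes W$, from which $V\cong W$. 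This is exactly your ``isolate the $i$th factor using relative primality'' idea, but carried out on a single element rather than as a module-theoretic construction---which is what makes it work.
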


\begin{proof}
Suppose $\phi: \bigotimes_{i=1}^k V_{\mu_i}^{f_i^{n_i}} \otimes V \rightarrow  \bigotimes_{i=1}^\ell V_{\theta_i}^{g_i^{m_i}} \otimes W$ is an isomorphism. 
Fix $0 \neq v \in V$ and write  
$$\phi (v_0 \otimes v) = \sum_{\bar {\bar s} \in \z_{\ge 0}^{m_0}} g^{\bar {\bar s}} w_0 \otimes w_{\bar {\bar s}}$$
for some $w_{\bar {\bar s}} \in W$. Since $w_{\bar {\bar s}} \neq 0$ for at most finitely many ${\bar {\bar s}}\in \z_{\ge 0}^{m_0}$, we may choose $L \in \z_{\ge 1}$ so that $\Vir_{\geq L} v=0$ and $\Vir_{\geq L} w_{\bar {\bar s}} = 0$ whenever $w_{\bar {\bar s}} \neq 0$.

Suppose $\{ \lambda_1, \ldots, \lambda_k \} \neq \{ \theta_1, \ldots, \theta_\ell \}$.  Without loss of generality, we may assume $k \geq \ell$ and $\lambda_1 \not\in \{ \theta_1, \ldots , \theta_\ell \}$.  Applying Lemmas \ref{lem:repRootPowerComp1} and  \ref{lem:repRootPowerComp3}, for each $1 \leq i \leq \ell$ there is $M_i \in \z_{\ge 1}$ such that $\langle g_i^{M_i} \rangle \cdot (g_i^{\bar s_i} ) w_{i}=0$ for all ${\bar {\bar s}}  \in \z_{\ge 0}^{m_0}$ such that $w_{\bar {\overline s}} \neq 0$.  Also, Lemma \ref{lem:degreehom} implies that $\langle f_i^{2n_i} \rangle v_i=0$ for each $1 \leq i \leq k$. Define $G=  t^L \prod_{i=2}^k f_i^{2n_i} \prod_{i=1}^\ell g_i^{M_i}$. Since the $\lambda_i$ are distinct and nonzero and $\lambda_1 \neq \theta_i$ for all $1 \leq i \leq \ell$, the polynomials $f_1^{2n_1}$ and $G$ are relatively prime.  Thus, there are $h_1, h_2 \in \cc[t]$ such that $h_1 f_1^{2n_1} + h_2 G=t^0=f_1^0$.
By the choice of $L$ and definition of $G$, 
\begin{align*}
 h_2 G \phi(v_0 \otimes v) &=\sum_{\bar{\bar s} \in \z_{\ge 0}^{m_0}} (h_2 G \cdot g^{\bar {\bar s}} w_0) \otimes w_{\bar {\bar s}} + g^{\bar {\bar s}}  w_0 \otimes (h_2 G \cdot g^{\bar {\bar s}}w_{\bar {\bar s}}) =0.
\end{align*}
On the other hand,
\begin{align}
h_2 G v_0 &= \left( (f_1^0- h_1 f_1^{2n_1}) v_1 \right) \otimes \cdots \otimes v \label{eqn:tensorisom7}\\
&\quad + \sum_{i=2}^k  v_1 \otimes \cdots \otimes h_2 G v_i \otimes \cdots  \otimes v \label{eqn:tensorisom8}\\
& \quad + v_1 \otimes \cdots \otimes h_2 G v \label{eqn:tensorisom9}. 
\end{align}
Since $\langle f^{2n_i}\rangle v_i=0$, it follows that (\ref{eqn:tensorisom8}) is zero. By the choice of $L$, (\ref{eqn:tensorisom9}) is also zero. Finally, (\ref{eqn:tensorisom7}) simplifies to $\left(f_1^0 v_1 \right) \otimes \cdots \otimes v$. 
Therefore, $h_2G v_0 \otimes v$ is a nonzero element in the kernel of $\phi$, which contradicts that $\phi$ is an isomorphism. Thus,  it must be that $k=l$ and (after possibly reordering) $\lambda_i= \theta_i$ for all $1 \leq i \leq k$.  In this case, $f_i=g_i$; we use the notation $f_i$ below. 

We next argue that $n_i= m_i$ and $\mu_i= \theta_i$ for all $1 \leq i \leq k$. Without loss of generality, we may assume $n_1  \leq m_1$. For each $1 \leq i \leq k$, choose $M_i \in \z_{\ge 1}$ such that $M_i \geq 2 n_i$ and $\langle f_i^{M_i} \rangle \cdot (f_i)^{\bar s_i} w_{i}=0$ for all ${\bar {\bar s}} \in \z_{\ge 0}^{m_0}$ such that $w_{\bar {\overline s}} \neq 0$.  Define $F=\prod_{i =2}^k f_i^{M_i} $.

Let $\bar {\bar s} = (\bar s_1, \ldots, \bar s_\ell) \in \z_{\ge 0}^{m_0}$ be such that $\bar s_1$ is maximal with $w_{\bar {\bar s}} \neq 0$. First suppose $| \bar s_1|>0$. If $\ell ( \bar s_1) > 0$, define $m= m_1 + k_1 + 1 - \ell ( \bar s_1)$; if $\ell ( \bar s_1 ) = 0$, define $m= m_1 + k_1  + s_{1, 0}$. Since $f_1^{M_1}$ and $F$ are relatively prime, there are $g_1, g_2 \in \cc [t]$ such that $f_1^m = h_1 f_1^{M_1} + h_2 F$.  Then, for any $j \geq L$, 
\begin{align}
(t^j h_2F-\theta_1(t^j f_1^m)) \phi(v_0 \otimes v) &= \sum_{\bar {\bar s}' \in \z_{\ge 0}^{m_0}} (t^j f_1^m - \theta_1(t^j f_1^m) )\cdot f_1^{{\bar s}_1'}w_1 \otimes \cdots \otimes w_{\bar {\bar s}' } \label{eqn:tensorisom1} \\
&\quad -\sum_{\bar {\bar s}' \in \z_{\ge 0}^{m_0}} t^jh_1f_1^{M_1} \cdot f_1^{{\bar s}_1'}w_1 \otimes \cdots \otimes w_{\bar {\bar s}' } \label{eqn:tensorisom1a} \\
& \quad + \sum_{\bar {\bar s}' \in \z_{\ge 0}^{m_0}} \sum_{i \geq 2}  \cdots \otimes t^jh_2F \cdot f_{i}^{{\bar s}_{i}'} w_i\otimes \cdots \otimes w_{\bar {\bar s}} \label{eqn:tensorisom1b} 
\end{align}
Note that (\ref{eqn:tensorisom1a}) and (\ref{eqn:tensorisom1b}) are zero by the choice of $M_i$. Applying Lemma \ref{lem:reducedegree} to (\ref{eqn:tensorisom1}), we may choose $j$ such that $0 \neq (t^jh_2F-\theta(t^j f_1^m)) \theta(v_0\otimes v)  \not\in \cc \theta(v_0\otimes v)$. 

 On the other hand, since $m_1 \ge n_1$, a similar calculation yields
 \begin{align}
 (t^j h_2F-\theta(t^j f_1^m)) (v_0 \otimes v) &= (\mu_1(t^j f_1^m) - \theta_1(t^j f_1^m)) v_0 \otimes v \in \cc v_0\otimes v. \label{eqn:tensorisom2}
 \end{align}
This forces $\phi(v_0\otimes v) \in \cc \phi (v_0\otimes v)$, a contradiction. Thus, it cannot be that $| \bar s_1|>0$.
 
 Now consider the case $\bar s_1 = (0, \ldots, 0)$. If we let $m=n_1$ and replace $\theta_1$ with $\mu_1$ in the calculations above, we get
 \begin{align}
(t^jg_2F-\mu_1(t^j f_1^{n_1})) \phi(v_0 \otimes v)
&=\sum_{\bar {\bar s}' \in \z_{\ge 0}^{m_0}} \left((t^j f_1^{n_1} - \mu_1(t^j f_1^{n_1})) w_1 \right)\otimes \cdots \otimes  w_{\bar {\bar s}'} \label{eqn:tensorisom3} \\
(t^jg_2F-\mu_1(t^j f_1^{n_1})) (v_0 \otimes v) &= (\mu_1(t^j f_1^{n_1}) - \mu_1(t^j f_1^{n_1})) v_1 \otimes \cdots \otimes  v=0. \label{eqn:tensorisom4}
\end{align}
If $n_1<m_1$, then  $t^j f_1^{n_1} \not\in \langle f_1^{m_1} \rangle$ and thus $t^j f_1^{n_1} w_1 \not\in \cc w_1$. 
It follows from (\ref{eqn:tensorisom3}) that $(t^jg_2F-\mu_1(t^j f_1^{n_1})) \phi(v_0 \otimes v) \not\in \cc \phi (v_0 \otimes v)$. 
This contradicts (\ref{eqn:tensorisom4}). Thus, it must be that $n_1=m_1$.  In this case,  (\ref{eqn:tensorisom3}) becomes $(\theta_1(t^j f_1^{n_1})-\mu_1(t^j f_1^{n_1}) )\phi(v_0)$. Because $\mu_1$ and $\theta_1$ are determined by polynomials in $j$, this matches (\ref{eqn:tensorisom4}) for all $j \geq L$ if and only if $\theta_1= \mu_1$. 
By applying this argument to other terms in the tensor product and using the symmetry of the isomorphism relationship, we conclude $n_i= m_i$ and $\mu_i= \theta_i$ for all $1 \leq i \leq k$.

It remains to show that $V \cong W$. From the above arguments, for any $0 \neq v \in V$, there is some $w \in W$ such that $\phi (v_0 \otimes v) = w_0 \otimes w$. Define a linear map $\phi': V \rightarrow W$ by $\phi'(v)=w$.  We argue that $\phi'$ is an isomorphism.   

Fix $v \in V$, and choose $L \in \z_{\ge 0}$ such that $\Vir_{\geq L}v=\Vir_{\geq L} \phi'(v)=0$. Let $x \in U(\Vir)$. By Lemma \ref{lem:UUk}, there is $u \in U(\Vir_{\geq L})$ such that $uv=u\phi'(v)=0$, \ $xv_0=uv_0$ and $xw_0=uw_0$.  (Note that Lemma \ref{lem:UUk} applies to elements of $\Vir$. We may extend this to $U(\Vir)$ by inducting on the natural grading of $U(\Vir)$.)  Then,
\begin{align*}
\phi(xv_0 \otimes v) &= \phi(u v_0 \otimes v + v_0 \otimes u v ) \\
&= u \phi(v_0 \otimes v) = u(w_0 \otimes \phi'(v)) \\
&=uw_0 \otimes \phi'(v) + v_0 \otimes u \phi'(v) \\
&= xw_0 \otimes \phi'(v).
\end{align*}
In particular, since $\phi$ is a bijection, this shows that $\phi'$ is a bijection. To show that $\phi'$ is a homomorphism, we note
\begin{align*}
x \phi(v_0 \otimes v) &= x w_0 \otimes \phi'(v) + w_0 \otimes x \phi'(v)\\
\phi(x(v_0 \otimes v)) &= \phi(xv_0 \otimes v+ v_0 \otimes xv)\\
&= x w_0 \otimes \phi'(v) + w_0 \otimes \phi'(xv).
\end{align*}
Therefore, $w_0 \otimes x \phi'(v)=w_0 \otimes \phi'(xv)$ and $\phi'$ is a homomorphism.
\end{proof}

\section{More simple induced modules via tensor products} \label{sec:tensorapplications}
In this section, we show that the tensor products described in Theorem \ref{thm:tensorsimpleV} are isomorphic to new simple modules, induced from either polynomial algebras $\Vir^f$ where $f$ has multiple distinct roots, or from certain ``restricted polynomial" subalgebras.

\subsection{A general approach to induced modules from tensor products} \label{sec:GeneralTensor}
Before working with the specific induced modules described in this paper, we establish a general connection via tensor products between modules induced from different subalgebras.

  Suppose $\g$ is a Lie algebra over a field $\F$ with an infinite but countable basis. Let $\mathfrak a$ and $\mathfrak b$ be Lie subalgebras of $\g$ such that $\dim \mathfrak a, \dim \mathfrak b, \dim \mathfrak a \cap \mathfrak b = \infty$.  
  
Suppose $\alpha : \mathfrak a \to \F$ and $\beta : \mathfrak b \to \F$ are homomorphisms, and regard $\alpha + \beta : \mathfrak a \cap \mathfrak b \to \F$.  Define a one-dimensional $\mathfrak a$-module $\F_\alpha$ by $x.1 = \alpha(x)$ for any $x \in \mathfrak a$ and an induced $\g$-module
$$
V_{\alpha} = U(\g) \otimes_{U(\mathfrak a)} \F_{\alpha}.
$$
Write $v_{\alpha} =1 \otimes 1 \in V_{\alpha}$, the canonical generator.
Similarly define $\F_{\beta}$, $V_{\beta}$, and $v_{\beta}$; and $\F_{\alpha+\beta}$, $V_{\alpha+\beta}$ and $v_{\alpha+\beta}$.

\begin{prop} \label{prop:generaltensor}
Suppose the module $V_{\alpha} \otimes V_\beta$ is cyclic  generated by $v_\alpha \otimes v_\beta$. Then $V_{\alpha+\beta} \cong V_{\alpha} \otimes V_\beta$.
\end{prop}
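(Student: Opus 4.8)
The plan is to construct a surjective $\g$-module homomorphism $\phi : V_{\alpha+\beta}\to V_\alpha\otimes V_\beta$ from the universal property of the induced module, and then to prove injectivity by a PBW/leading-term comparison; the homomorphism and surjectivity are formal, and the injectivity step is the real work.

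First, for $x\in\mathfrak a\cap\mathfrak b$ one has
\[
x\cdot(v_\alpha\otimes v_\beta)=(xv_\alpha)\otimes v_\beta+v_\alpha\otimes(xv_\beta)=\bigl(\alpha(x)+\beta(x)\bigr)(v_\alpha\otimes v_\beta),
\]
so $\F(v_\alpha\otimes v_\beta)$ is an $(\mathfrak a\cap\mathfrak b)$-submodule of $V_\alpha\otimes V_\beta$ isomorphic to $\F_{\alpha+\beta}$. By the universal property of $V_{\alpha+\beta}=U(\g)\otimes_{U(\mathfrak a\cap\mathfrak b)}\F_{\alpha+\beta}$ there is a unique $\g$-module map $\phi:V_{\alpha+\beta}\to V_\alpha\otimes V_\beta$ with $\phi(v_{\alpha+\beta})=v_\alpha\otimes v_\beta$; its image is $U(\g)(v_\alpha\otimes v_\beta)$, which is all of $V_\alpha\otimes V_\beta$ exactly by the cyclicity hypothesis, so $\phi$ is surjective. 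It remains to show $\phi$ is injective.

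For this I would fix a basis of $\mathfrak a\cap\mathfrak b$, extend it inside $\mathfrak a$ by a set $\mathcal A$, extend it inside $\mathfrak b$ by a set $\mathcal B$, and then extend the resulting basis of $\mathfrak a+\mathfrak b$ to a basis of $\g$ by a set $\mathcal D$. Ordering this basis of $\g$ so that the basis vectors of $\mathfrak a$ (respectively $\mathfrak b$, respectively $\mathfrak a\cap\mathfrak b$) come last, the PBW theorem (as used in Lemma \ref{lem:inducedbases}) shows that the ordered monomials in $\mathcal B\cup\mathcal D$ applied to $v_\alpha$ form a basis of $V_\alpha$, those in $\mathcal A\cup\mathcal D$ applied to $v_\beta$ form a basis of $V_\beta$, and those in $\mathcal A\cup\mathcal B\cup\mathcal D$ applied to $v_{\alpha+\beta}$ form a basis of $V_{\alpha+\beta}$. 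Thus it suffices to prove the vectors $\mathbf m\cdot(v_\alpha\otimes v_\beta)$, over ordered monomials $\mathbf m$ in $\mathcal A\cup\mathcal B\cup\mathcal D$, are linearly independent. I would do this with a top-degree argument relative to the total-degree (PBW) filtration: if $\mathbf m$ has degree $d$, then since each basis vector of $\mathcal A$ acts on $v_\alpha$ (respectively $\mathcal B$ on $v_\beta$) by a scalar, expanding $\mathbf m\cdot(v_\alpha\otimes v_\beta)$ via the coproduct shows that its degree-$d$ symbol in $\mathrm{gr}(V_\alpha\otimes V_\beta)\cong\mathrm{gr}(V_\alpha)\otimes\mathrm{gr}(V_\beta)\cong S(\g/\mathfrak a)\otimes S(\g/\mathfrak b)$ equals $S(\iota)$ applied to the corresponding monomial in $S\bigl(\g/(\mathfrak a\cap\mathfrak b)\bigr)$, where $\iota:\g/(\mathfrak a\cap\mathfrak b)\hookrightarrow\g/\mathfrak a\oplus\g/\mathfrak b$, $x+(\mathfrak a\cap\mathfrak b)\mapsto(x+\mathfrak a,\,x+\mathfrak b)$. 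This $\iota$ is injective, hence split over a field, so $S(\iota)$ is a split injection of symmetric algebras; as the monomials form a basis of $S\bigl(\g/(\mathfrak a\cap\mathfrak b)\bigr)$, their images are linearly independent, and the standard ``extract the leading degree'' argument then forces linear independence of the $\mathbf m\cdot(v_\alpha\otimes v_\beta)$. Hence $\phi$ sends a basis to a linearly independent set, so $\phi$ is injective, and combined with surjectivity it is the desired isomorphism.

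The main obstacle is the bookkeeping in the injectivity step: one must choose the PBW bases of $V_\alpha$, $V_\beta$, and $V_{\alpha+\beta}$ from a \emph{single} compatibly ordered basis of $\g$, and verify carefully that the associated-graded symbol of $\mathbf m\cdot(v_\alpha\otimes v_\beta)$ is indeed $S(\iota)$ of a monomial (in particular that the vectors of $\mathcal D$ contribute the ``diagonal'' terms while those of $\mathcal A$ and $\mathcal B$ do not drop out). Everything else is routine, and notably no hypothesis beyond cyclicity is needed — injectivity of $\phi$ holds unconditionally.
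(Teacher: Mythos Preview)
Your proof is correct and follows the same overall architecture as the paper's: construct $\phi$ from the universal property, get surjectivity from cyclicity, and prove injectivity by a PBW leading-degree argument using compatibly chosen bases of $\mathfrak a\cap\mathfrak b\subseteq\mathfrak a,\mathfrak b\subseteq\g$. Your closing remark that injectivity holds without the cyclicity hypothesis is also made explicitly in the paper right after the proof.

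The one genuine difference is in how the injectivity step is packaged. The paper works concretely: for a basis vector $z_{\mathbf i}x_{\mathbf j}y_{\mathbf k}v_{\alpha+\beta}$ of maximal total length it isolates the \emph{single} basis vector $(z_{\mathbf i}y_{\mathbf k}v_\alpha)\otimes(x_{\mathbf j}v_\beta)$ of $V_\alpha\otimes V_\beta$ and shows that no other monomial of equal or smaller length can contribute to that coefficient. You instead pass to the associated graded and identify the entire top-degree symbol of $\mathbf m\cdot(v_\alpha\otimes v_\beta)$ with the image of a monomial under $S(\iota)$, where $\iota:\g/(\mathfrak a\cap\mathfrak b)\hookrightarrow\g/\mathfrak a\oplus\g/\mathfrak b$, and conclude from injectivity of $S(\iota)$. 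This is a cleaner, basis-free reformulation of the same computation (the paper is effectively reading off one coordinate of $S(\iota)$), and it makes transparent why the argument needs nothing beyond $\mathfrak a\cap\mathfrak b$ being the honest intersection. The paper's version has the advantage of being entirely elementary, requiring no filtration/associated-graded language; yours has the advantage of avoiding the explicit straightening bookkeeping.
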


\begin{proof}
For any $x \in \mathfrak a \cap \mathfrak b$, 
$$
x (v_\alpha \otimes v_\beta) = (x v_\alpha) \otimes v_\beta+ v_\alpha \otimes (x v_\beta)=  (\alpha(x)+\beta(x)) v_\alpha \otimes v_\beta.
$$
Thus, by the definition of $V_{\alpha+\beta}$, there is a homomorphism $\phi: V_{\alpha+ \beta} \rightarrow V_\alpha \otimes V_\beta$ with $\phi(v_{\alpha+\beta}) = v_\alpha \otimes v_\beta$. Because $V_{\alpha} \otimes V_\beta$ is generated by $v_\alpha \otimes v_\beta$, it follows that $\phi$ is surjective.

It remains to show that $\phi$ is injective. To do this, we first establish some notation.  Fix pairwise disjoint, ordered sets $w = \{ w_1, w_2, \ldots \}$, \ $x = \{ x_1, x_2, \ldots \}$, \ $y = \{ y_1, y_2, \ldots \}$, and $z = \{ z_1, z_2, \ldots \}$ so that 
\begin{itemize}
\item $w$ is a basis for $\mathfrak a \cap \mathfrak b$;
\item $w \cup x$ is a basis for $\mathfrak a$;
\item $w \cup y$ is a basis for $\mathfrak b$; and 
\item $w \cup x \cup y \cup z$ is a basis for $\g$.
\end{itemize}
Note that any of these sets may be finite or empty. 

Let $\Omega$ denote the set of all (possibly empty) finite weakly increasing tuples of positive integers. For any ordered subset $q = \{ q_1, q_2, \ldots \}$  of $\g$ and ${\bf i} = ( i_1, \ldots , i_k ) \in \Omega$ define 
$$q_{\bf i} = \left\{ \begin{array}{ll} q_{i_1} q_{i_2} \cdots q_{i_k} & \mbox{if ${\bf i} \neq \emptyset$} \\ 1 & \mbox{if ${\bf i} = \emptyset$} \end{array} \right.$$
in the universal enveloping algebra $U( \g )$. If the set $\{ q_1, q_2, \ldots \}$ is finite or empty, we assume that $\Omega$ is suitably restricted so that $q_{\bf i} $ is defined. For ${\bf i} = (i_1, \ldots, i_k ) \in \Omega$, define $\ell ( {\bf i} )=k$.  


By the PBW Theorem and the construction of the corresponding modules, we have the following:
\begin{itemize}
\item $\{ z_{\bf i} y_{\bf j} v_\alpha \mid {\bf i}, {\bf j} \in \Omega \}$ is a basis for $V_\alpha$;
\item $\{ z_{\bf i} x_{\bf j} v_\beta \mid {\bf i}, {\bf j} \in \Omega \}$ is a basis for $V_\beta$; 
\item $\{ z_{\bf i} x_{\bf j} y_{\bf k} v_{\alpha+\beta} \mid {\bf i}, {\bf j}, {\bf k} \in \Omega \}$ is a basis for $V_{\alpha + \beta}$;
\item $\{ z_{\bf i} y_{\bf j} v_\alpha \otimes z_{\bf k} x_{\bf l} v_\beta \mid {\bf i}, {\bf j}, {\bf k}, {\bf l} \in \Omega \}$ is a basis for $V_\alpha \otimes V_\beta$.
\end{itemize}

Let ${\bf i}, {\bf j}, {\bf k} \in \Omega$. Then
\begin{align} \label{eqn:basistensor}
z_{\bf i} x_{\bf j} y_{\bf k} (v_\alpha \otimes v_\beta) 
&= (z_{\bf i} y_{\bf k} v_\alpha) \otimes (x_{\bf j} v_\beta) \\
&\quad + \sum_{{\bf i'} \neq {\bf i}} (z_{\bf i'} y_{\bf k} v_\alpha) \otimes (z_{\bf i''} x_{\bf j}  v_\beta) \nonumber \\
& \quad 
+ \sum_{{\bf j'} \neq \emptyset \ \mbox{or} \ {\bf k'} \neq {\bf k}} (z_{\bf i'} x_{\bf j'} y_{\bf k'} v_\alpha) \otimes (z_{\bf i''} x_{\bf j''} y_{\bf k''} v_\beta) \label{eqn:basistensor3}
\end{align}
where the sum is over ${\bf i'}, {\bf j'}, {\bf k'},\in \Omega$ such that $ {\bf i}' \subseteq {\bf i}$, ${\bf j}' \subseteq {\bf j}$, and  $ {\bf k}' \subseteq {\bf k}$; and ${\bf i''} = {\bf i} \setminus {\bf i'}$, ${\bf j''} = {\bf j} \setminus {\bf j'}$, and ${\bf k''} = {\bf k} \setminus {\bf k'}$.

Using standard straightening arguments, for ${\bf j'} \neq \emptyset$,
$$
z_{\bf i'} x_{\bf j'} y_{\bf k'} v_\alpha = 
\sum_{{\bf q}, {\bf r} \in \Omega, \ \ell({\bf q}) + \ell({\bf r}) \leq \ell({\bf i'}) + \ell({\bf k'})} b({\bf q},{\bf r})  z_{\bf q} y_{\bf r} v_\alpha
$$
for some $b({\bf q},{\bf r}) \in \cc$. Applying the same argument to $z_{\bf i''} x_{\bf j''} y_{\bf k''} v_\beta$, the sum (\ref{eqn:basistensor3}) can be written as a linear combination of basis vectors $z_{\bf q} y_{\bf r} v_\alpha \otimes z_{\bf s} x_{\bf t} v_\beta$ such that
$$
\ell({\bf q})+\ell({\bf r})+\ell({\bf s})+\ell({\bf t}) \leq \ell({\bf i'}) + \ell({\bf k'}) + \ell({\bf i''}) + \ell({\bf j''}) 
< \ell({\bf i})+\ell({\bf j})+\ell({\bf k}).
$$

Now let $0 \neq v \in V_{\alpha+\beta}$. We may write 
$$v = \sum_{{\bf i}, {\bf j}, {\bf k} \in \Omega} c( {\bf i}, {\bf j}, {\bf k}) z_{\bf i} x_{\bf j} y_{\bf k} v_{\alpha+\beta},$$
 where $c( {\bf i}, {\bf j}, {\bf k}, {\bf l}) \in \cc$. 
Choose $( {\bf i}_0, {\bf j}_0, {\bf k}_0)$ so that $c( {\bf i}_0, {\bf j}_0, {\bf k}_0) \neq 0$ and $\ell ( {\bf i}_0) + \ell ({\bf j}_0) + \ell ({\bf k}_0) \ge \ell ( {\bf i}) + \ell ({\bf j}) + \ell ({\bf k})$ whenever ${\bf i}, {\bf j}, {\bf k}$ satisfy $c ( {\bf i}, {\bf j}, {\bf k} ) \neq 0$.  Then writing $\phi(v) = \sum_{{\bf i}, {\bf j}, {\bf k} \in \Omega} c( {\bf i}, {\bf j}, {\bf k}) z_{\bf i} x_{\bf j} y_{\bf k} (v_\alpha \otimes v_\beta)$ in terms of the standard basis, the equation (\ref{eqn:basistensor}-\ref{eqn:basistensor3}) implies that the coefficient of 
$( z_{{\bf i}_0} y_{{\bf j}_0} v_\alpha ) \otimes (x_{{\bf k}_0} v_\beta)$
is $c( {\bf i}_0, {\bf j}_0, {\bf k}_0) \neq 0$.  Thus,  $\phi(v) \neq 0$.
\end{proof}

We note that the proof of Proposition \ref{prop:generaltensor} shows that in this context, the map ${\rm Ind}_{\mathfrak a \cap \mathfrak b}^\g \to {\rm Ind}_{\mathfrak a}^\g \otimes {\rm Ind}_{\mathfrak b}^\g$ is injective regardless of whether ${\rm Ind}_{\mathfrak a}^\g \otimes {\rm Ind}_{\mathfrak b}^\g$ is cyclic.

\subsection{$V^f_\mu$ for an arbitrary polynomial $f$}
Now we apply the results of Section \ref{sec:GeneralTensor} to describe $V^f_\mu$ for an arbitrary polynomial $f$. In order to do this, we first establish two lemmas addressing the underlying homomorphisms $\mu: \Vir^f \rightarrow \cc$.

\begin{lem} \label{lem:polydeg}
Let $(\lambda, n, f; \mu, p, r)$ be as in Conventions \ref{conv}. Then for any $g \in \cc[t]$ such that $g(\lambda) \neq 0$, the homomorphism $\mu' = \mu \mid_{\Vir^{g f^n}}$ has the form $\mu'(t^j g f^n ) = q(j) \lambda^j$ for $j \in \z$, where $q$ is a polynomial such that $\deg (q) = \deg (p)$.
\end{lem}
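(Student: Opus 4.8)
The plan is to reduce $\mu'$ to the known values of $\mu$ on $\langle f^n\rangle$ by expanding $g$ as a $\cc$-linear combination of monomials, and then to track leading coefficients. First note that since $g f^n = g\cdot f^n$ lies in the associative ideal $\langle f^n\rangle$, we have $\langle g f^n\rangle\subseteq\langle f^n\rangle$, so $\Vir^{g f^n}=\theta^{-1}(\langle g f^n\rangle)\subseteq\theta^{-1}(\langle f^n\rangle)=\Vir^{f^n}$ and $\mu'=\mu\mid_{\Vir^{g f^n}}$ is a well-defined Lie algebra homomorphism; by Lemma \ref{lem:homzerocentral} it factors through $\theta$, so $\mu'(x_j^{g f^n})=\mu(t^j g f^n)$ computed in $\langle f^n\rangle$.

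Next, write $g=\sum_{i=0}^{d} b_i t^i\in\cc[t]$. Then $t^j g f^n=\sum_{i=0}^{d} b_i\, t^{j+i} f^n$ inside $\langle f^n\rangle$, so by linearity and Convention \ref{conv} (which gives $\mu(t^m f^n)=p(m)\lambda^m$),
\begin{align*}
\mu'(t^j g f^n)=\sum_{i=0}^{d} b_i\,\mu(t^{j+i} f^n)=\sum_{i=0}^{d} b_i\, p(j+i)\lambda^{j+i}=\lambda^j\!\left(\sum_{i=0}^{d} b_i\lambda^i\, p(j+i)\right).
\end{align*}
Thus $\mu'(t^j g f^n)=q(j)\lambda^j$ with $q(x):=\sum_{i=0}^{d} b_i\lambda^i\, p(x+i)\in\cc[x]$, which is the desired shape.

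It remains to verify $\deg q=\deg p=r$. Write $p(x)=c_r x^r+(\text{lower order})$ with $c_r\neq 0$ when $r\ge 0$. Since translating the argument does not change the leading coefficient, $p(x+i)=c_r x^r+(\text{deg}<r)$ for every $i$, hence $q(x)=\bigl(\sum_{i=0}^{d} b_i\lambda^i\bigr)c_r\, x^r+(\text{deg}<r)=g(\lambda)\,c_r\, x^r+(\text{deg}<r)$. The hypothesis $g(\lambda)\neq 0$ forces the leading coefficient $g(\lambda)c_r$ to be nonzero, so $\deg q=r$. (When $r=-1$, i.e.\ $p=0$ and $\mu=0$, we also get $q=0$ and $\deg q=-1=\deg p$, so the conclusion holds uniformly.) The only point requiring care is the identification $\sum_i b_i\lambda^i=g(\lambda)$, which is precisely where the assumption $g(\lambda)\neq 0$ is used to rule out a drop in degree; everything else is routine.
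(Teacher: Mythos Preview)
Your proof is correct and follows essentially the same approach as the paper's own proof: expand $g$ as a sum of monomials, use linearity and the formula $\mu(t^m f^n)=p(m)\lambda^m$ to obtain $q(x)=\sum_i b_i\lambda^i p(x+i)$, and then read off the leading coefficient as $g(\lambda)$ times the leading coefficient of $p$. Your added remarks on why $\mu'$ is well-defined and the degenerate case $r=-1$ are fine supplementary detail but not part of the paper's argument.
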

\begin{proof}
Write $g = \sum_{i=0}^m c_i t^i$, where $c_m \neq 0$.  Then 
\begin{align*}
\mu ( t^j gf^n ) &=  \mu \left(  \sum_{i=0}^m c_i t^{i+j} f^n \right)\\
&=\sum_{i=0}^m c_i p(i+j) \lambda^{i+j} \\
&= \left( \sum_{i=0}^m c_i \lambda^i p(i+j)  \right) \lambda^j.
\end{align*}
Note that $q(j)=\sum_{i=0}^m c_i \lambda^i p(i+j)$ can be viewed as a polynomial in $j$. Moreover, if $\deg (p) =d$ and the degree $d$ coefficient of $p$ is $c$, then $q$ has leading coefficient $\sum_{i=0}^m c c_i \lambda^i = c g(\lambda) \neq 0$. This completes the proof.
\end{proof}

\begin{lem} \label{lem:polydeg2}
Let $(k ; \lambda_i, n_i, f_i ; \mu_i, p_i, r_i)$ be as in Conventions \ref{conv2}; and define $f= \prod_{i=1}^k f_i^{n_i}$ and $\mu=\sum_{i=1}^k \mu_i: \Vir^f\rightarrow \cc$.  Then $\mu$ has the form
$$
\mu(t^j f)=\sum_{i=1}^k q_i(j) \lambda_i^j
$$
for some polynomials $q_i$ such that $\deg(q_i)=\deg(p_i)$. 

Moreover, any homomorphism $\mu: \Vir^f \rightarrow \cc$ has such a decomposition: if $ \mu(t^j f)=\sum_i q_i(j) \lambda_i^j$ (where $\deg(q_i)<n_i$), then $\mu=\sum_{i=1}^k \mu_i$ where $\mu_i: \Vir^{f_i^{n_i}} \rightarrow \cc$ is associated to a polynomial $p_i$ such that $\deg (p_i) =\deg (q_i)$.
\end{lem}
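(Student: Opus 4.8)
The plan is to handle the two assertions in turn, reducing the first to Lemma \ref{lem:polydeg} and the second to Lemma \ref{lem:mupoly} together with the theory of linear recurrences already invoked there. For the first assertion, fix an index $i$ and write $g_i = \prod_{i' \neq i} f_{i'}^{n_{i'}}$, so that $f = g_i f_i^{n_i}$ and $g_i(\lambda_i) = \prod_{i' \neq i}(\lambda_i - \lambda_{i'})^{n_{i'}} \neq 0$ since the $\lambda_{i'}$ are distinct. By construction $\mu_i$ is associated to $p_i$, meaning $\mu_i(t^j f_i^{n_i}) = p_i(j)\lambda_i^j$; applying Lemma \ref{lem:polydeg} to the restriction $\mu_i|_{\Vir^{g_i f_i^{n_i}}} = \mu_i|_{\Vir^f}$ yields a polynomial $q_i$ with $\deg q_i = \deg p_i = r_i$ and $\mu_i(t^j f) = q_i(j)\lambda_i^j$. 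Summing over $i$ gives $\mu(t^j f) = \sum_{i=1}^k q_i(j)\lambda_i^j$ with $\deg q_i = \deg p_i$, which is the claimed decomposition. (One should note that $\mu_i$ a priori is only defined on $\Vir^{f_i^{n_i}} \supseteq \Vir^f$, and by Lemma \ref{lem:homzerocentral} all of these maps kill $z$, so the restriction and the sum make sense; this is the only slightly delicate bookkeeping point.)

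For the second assertion, suppose $\mu: \Vir^f \to \cc$ is an arbitrary homomorphism. By Lemma \ref{lem:homzerocentral}, $\mu(z) = 0$, so $\mu$ factors through a homomorphism $\langle f\rangle \to \cc$, and Lemma \ref{lem:mupoly} (applied with the root data $\lambda_1,\dots,\lambda_k$ and multiplicities $n_1,\dots,n_k$) tells us that $\mu(t^j f) = \sum_{i=1}^k q_i(j)\lambda_i^j$ for some polynomials $q_i$ with $\deg q_i < n_i$. Given such a $q_i$, I would \emph{define} $\mu_i: \Vir^{f_i^{n_i}} \to \cc$ by first setting $p_i := $ the unique polynomial with the property that the restriction of the recurrence data for $f_i^{n_i}$ reproduces $q_i$ — more concretely, reverse the computation in Lemma \ref{lem:polydeg}: since $q_i(j) = \sum_{\nu} c_\nu \lambda_i^\nu p_i(\nu + j)$ where $f = g_i f_i^{n_i}$ with $g_i = \sum_\nu c_\nu t^\nu$ and $g_i(\lambda_i) \neq 0$, this is an invertible linear relation between the coefficient vector of $p_i$ and that of $q_i$ (it is upper-triangular with diagonal entries $g_i(\lambda_i) \neq 0$ after an appropriate ordering), so $p_i$ exists, is unique, and has $\deg p_i = \deg q_i < n_i$. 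Then by Lemma \ref{lem:mupoly} the map $t^j f_i^{n_i} \mapsto p_i(j)\lambda_i^j$ is a genuine homomorphism $\langle f_i^{n_i}\rangle \to \cc$, hence (composing with $\theta$) a homomorphism $\mu_i: \Vir^{f_i^{n_i}} \to \cc$ associated to $p_i$.

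Finally I would verify $\mu = \sum_{i=1}^k \mu_i$ as maps on $\Vir^f$: by the first part, $\mu_i(t^j f) = q_i(j)\lambda_i^j$ for exactly the $q_i$ we started with, so $\sum_i \mu_i(t^j f) = \sum_i q_i(j)\lambda_i^j = \mu(t^j f)$ for all $j \in \z$, and since $\{t^j f \mid j \in \z\}$ spans $\langle f\rangle$ (and both sides kill $z$), the two homomorphisms agree on all of $\Vir^f$. The main obstacle is the uniqueness/existence of $p_i$ recovering a prescribed $q_i$: one must check that the linear transformation $p_i \mapsto q_i$ induced by multiplication by $g_i$ (evaluated along the recurrence) is invertible on the space of polynomials of degree $< n_i$, which comes down to the leading-coefficient computation $c_{\mathrm{lead}}(q_i) = g_i(\lambda_i)\, c_{\mathrm{lead}}(p_i)$ from Lemma \ref{lem:polydeg} showing it is degree-preserving, hence bijective by a dimension count. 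Everything else is routine linear algebra and unwinding definitions.
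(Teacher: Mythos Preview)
Your proof is correct and follows essentially the same approach as the paper. For the first assertion both you and the paper invoke Lemma~\ref{lem:polydeg} with $g_i = \prod_{i'\neq i} f_{i'}^{n_{i'}}$; for the second, the paper builds explicit homomorphisms $\gamma_{i,d}(t^j f_i^{n_i}) = j^d\lambda_i^j$ and uses Lemma~\ref{lem:polydeg} to see that their restrictions $q_{i,d}$ form a polynomial basis, whereas you argue directly that the linear map $p_i \mapsto q_i$ is upper-triangular with diagonal $g_i(\lambda_i)\neq 0$ and hence invertible by a dimension count---these are the same invertibility observation phrased two ways.
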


\begin{proof}
For each $1 \leq i \leq k$, define $g_i = \prod_{i' \neq i} f_{i'}^{n_{i'}}$.  Then $f = f_i^{n_i} g_i$ and $g_i ( \lambda_i ) \neq 0$, so the first part of the claim follows immediately from Lemma \ref{lem:polydeg}.

Now consider the second part of the claim. For each $1 \leq i \leq k$ and $d \in \z_{\ge 0}$, define a homomorphism $\gamma_{i,d}:  \Vir^{f_i^{n_i}}  \rightarrow \cc$ by $\gamma_{i,d} ( t^j f_i^{n_i} ) = j^d \lambda_i^j$.  Applying Lemma \ref{lem:polydeg}, we know that the restriction of $\gamma_{i,d}$ to $\Vir^f$ satisfies $\gamma_{i,d}(t^j f) = q_{i,d}(j) \lambda_i^j$ for some polynomial $q_{i,d}$ with $\deg(q_{i,d})=d$.   In particular, for each $i$, the set $\{ q_{i,d} \mid d \in \z_{\ge 1} \}$ forms a polynomial basis. Therefore, for each $1 \leq i \leq k$, we may write $q_i = \sum_{ d \in \z_{\ge 1}} b_{i,d} q_{i, d}$ for some $b_{i,d} \in \cc$.   Define $\mu_i: \Vir^{f_i^{n_i}}  \rightarrow \cc$ by $\mu_i = \sum_d b_{i,d} \gamma_{i,d}$. Then for $1 \le i \le k$ and $j \in \z$,
$$\mu_i ( t^j f_i^{n_i} )  = \sum_d b_{i,d} \gamma_{i,d} (t^j f_i^{n_i} )= \sum_d b_{i,d} j^d \lambda_i^j = p_i (j) \lambda_i^j,$$
where $p_i (x) = \sum_{ d \in \z_{\ge 1}} b_{i,d} x^d$; and 
$$\mu_i (t^j f) = \sum_d b_{i,d} \gamma_{i,d}(t^j f) = \sum_d b_{i,d} q_{i,d}(j) \lambda_i^j = q_i(j) \lambda_i^j.$$
Thus, the $\mu_i$ satisfy the claim.
\end{proof}

For a homomorphism $\mu: \Vir^f \rightarrow \cc$, define the homomorphism to have {\it large degree} if $\deg(p_i)\geq n_i-2$ for each $1 \leq i \leq k$, in the decomposition in Lemma \ref{lem:polydeg2}. 

\begin{cor} \label{cor:genfsimple}
 Let $(k ; \lambda_i, n_i, f_i ; \mu_i, p_i, r_i)$ be as in Conventions \ref{conv2}; and define $f= \prod_{i=1}^k f_i^{n_i}$ and $\mu=\sum_{i=1}^k \mu_i: \Vir^f\rightarrow \cc$. Then 
$$
\bigotimes_{i=1}^k V_{\mu_i}^{(t-\lambda_i)^{n_i}} \cong V^{f}_{\mu}.
$$
Moreover, for any polynomial $f \in \cc[t] \subseteq \cc[t^{\pm}]$ with nonzero constant term and any homomorphism $\mu: \Vir^f \rightarrow \cc$, the module $V^{f}_{\mu}$ has such a tensor product decomposition; and $V^{f}_{\mu}$ is simple if and only if $\mu$ has large degree.
\end{cor}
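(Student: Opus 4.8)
The plan is to deduce everything from Proposition \ref{prop:generaltensor} together with the structural lemmas already in place. First I would set up the tensor decomposition isomorphism. Apply Proposition \ref{prop:generaltensor} iteratively with $\g = \Vir$: take $\mathfrak a = \Vir^{f_1^{n_1}}$ and $\mathfrak b = \Vir^{g_1}$ where $g_1 = \prod_{i\ge 2} f_i^{n_i}$, so that $\mathfrak a \cap \mathfrak b = \theta^{-1}(\langle f_1^{n_1}\rangle \cap \langle g_1 \rangle) = \theta^{-1}(\langle f \rangle) = \Vir^f$; all three subalgebras have infinite dimension and infinite codimension, so the hypotheses are met. The homomorphism $\mu: \Vir^f \to \cc$ is $\mu_1\mid_{\Vir^f} + (\sum_{i\ge 2}\mu_i)\mid_{\Vir^f}$ in the notation of Lemma \ref{lem:polydeg2}, which is exactly $\alpha+\beta$. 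To invoke Proposition \ref{prop:generaltensor} I need $V_{\mu_1}^{f_1^{n_1}} \otimes (\bigotimes_{i\ge 2} V_{\mu_i}^{f_i^{n_i}})$ to be cyclic on $v_0$; this is supplied by Lemma \ref{lem:WVgen} (or, more directly, Lemma \ref{lem:cyclictensor} combined with an induction on $k$, using that a tensor product of two cyclic $\cc[t]$-modules of the relevant form is cyclic). Inducting on $k$ then gives $\bigotimes_{i=1}^k V_{\mu_i}^{f_i^{n_i}} \cong V_\mu^f$.

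Next I would address the ``moreover'' about arbitrary $f$. Write $f = c\prod_{i=1}^k (t-\lambda_i)^{n_i}$ with $\lambda_i \in \cc^\times$ distinct (possible since $f$ has nonzero constant term; and $\langle f\rangle = \langle c^{-1} f\rangle$, so we may take $f$ monic), and set $f_i = t-\lambda_i$. Given any homomorphism $\mu: \Vir^f \to \cc$, Lemma \ref{lem:mupoly} gives $\mu(t^j f) = \sum_i q_i(j)\lambda_i^j$ with $\deg q_i < n_i$, and the second half of Lemma \ref{lem:polydeg2} produces homomorphisms $\mu_i: \Vir^{f_i^{n_i}} \to \cc$ with associated polynomials $p_i$, $\deg p_i = \deg q_i = r_i$, such that $\mu = \sum_i \mu_i$. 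Applying the first part of the corollary yields $V_\mu^f \cong \bigotimes_{i=1}^k V_{\mu_i}^{f_i^{n_i}}$.

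Finally, for the simplicity criterion: by Corollary \ref{cor:tensIndPoly}, $\bigotimes_{i=1}^k V_{\mu_i}^{f_i^{n_i}}$ is simple if and only if $r_i \ge n_i - 2$ for every $i$, which is precisely the definition of $\mu$ having large degree (note this is well-defined, i.e.\ independent of the decomposition, since the $q_i$ are determined by $\mu$ via the distinct exponentials $\lambda_i^j$ and $\deg p_i = \deg q_i$). Combining with the isomorphism $V_\mu^f \cong \bigotimes_i V_{\mu_i}^{f_i^{n_i}}$ finishes the proof. The main obstacle I anticipate is purely bookkeeping: verifying that the cyclicity hypothesis of Proposition \ref{prop:generaltensor} is genuinely available at each stage of the induction (one must check that a tensor product of $V_{\mu_1}^{f_1^{n_1}}$ with an already-established cyclic tensor product is again cyclic over $\cc[t]$, which follows from the relative-primeness arguments in Lemmas \ref{lem:cyclictensor} and \ref{lem:UUk}), and making sure the ``large degree'' condition is phrased consistently between Conventions \ref{conv2}'s indexing and the intrinsic data of $\mu$.
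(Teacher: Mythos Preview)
Your proposal is correct and follows essentially the same approach as the paper's proof, which simply cites an inductive application of Proposition \ref{prop:generaltensor} together with Lemmas \ref{lem:cyclictensor} and \ref{lem:polydeg2} for the isomorphism, and Theorem \ref{thm:tensorsimpleV} (with $V$ trivial, i.e.\ Corollary \ref{cor:tensIndPoly}) plus Proposition \ref{prop:notsimplesmalldegree} and Lemma \ref{lem:polydeg2} for the simplicity claim. Your more explicit handling of the inductive cyclicity check (via Lemma \ref{lem:cyclictensor} applied to the full $k$-fold tensor product at each step) is exactly what the paper's terse citation of Lemma \ref{lem:cyclictensor} is pointing to.
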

\begin{proof}
The isomorphism claims follow by an inductive argument from Proposition \ref{prop:generaltensor} and Lemmas \ref{lem:cyclictensor} and \ref{lem:polydeg2}.
The simplicity claim follows from Theorem \ref{thm:tensorsimpleV} (taking $V$ to be the trivial module), Proposition \ref{prop:notsimplesmalldegree}, and Lemma \ref{lem:polydeg2}.
\end{proof}

\subsection{Modules induced from restricted polynomial subalgebras}
In this section, we fix a polynomial $f= \sum_{i=0}^p a_i t^i$, with $p \geq 0$ and $a_i \in \cc$ with $a_p=1$ and $a_0 \neq 0$.

For any $m \in \z_{\geq -1}$, define the following subalgebras of $\Vir$:
\begin{align*}
\B_m &= \{ z, e_j \mid j \geq m \}; \\
\B_m^f &= \{ z, x_j^f \mid j \geq m \},
\end{align*}
where $x_j^f = \sum_{i=0}^p a_i e_{j+i}$.  Note that $\B_m = \B_m^{t^0}$ and $\B_m^f= \B_m \cap \Vir^f$. We refer to the subalgebras $\B_m^f$ as {\it restricted polynomial subalgebras}. In this section, we describe modules of the form
$$
V^{f, m}_\mu = \Ind_{\B_m^f}^{\Vir} \cc_{\mu},
$$
where $\cc_{\mu}$ is a one-dimensional $\B_m^f$-module determined by a homomorphism $\mu: \B_m^f \rightarrow \cc$.  We show how to identify $V^{f, m}_\mu$ with a tensor product of the form $V_{\ddot \mu}^f \otimes \Ind_{\B_m}^{\Vir} \cc_{\hat \mu}$ for certain homomorphisms $\ddot \mu : \Vir^f \to \cc$ and $\hat \mu : \B_m \to \cc$.  Since the modules $ \Ind_{\B_m}^{\Vir} \cc_{\hat \mu}$ are cyclic and $\Vir^+$-locally annihilated, we can then apply Theorem \ref{thm:tensorsimpleV} to determine the simplicity of $V^{f, m}_\mu$.

We first study the homomorphisms $\mu: \B_m^f \rightarrow \cc$ in order to determine a ``decomposition" into $\ddot \mu$ and $\hat \mu$, using arguments similar to Lemma \ref{lem:mupoly}.  
A linear map $\mu: \B_m^f \rightarrow \cc$ is a homomorphism if and only if  
$$
0= \mu([x_j^f, x_k^f ]) = ( k-j) \left( \sum_{i=0}^p a_i \mu(x_{j+k+ i}^f) \right)
$$
for all $j, k \geq m$.
Writing $\mu_j = \mu(x_j^f)$, this is equivalent to the recurrence relation $\sum_{i=0}^p a_i \mu_{j+i} = 0$ for $j > 2m$.  In particular, for $m \leq j \leq 2m+p$, $\mu_j$ can take on any value; and $\mu_j$ for $j>2m+p$ are uniquely determined by $\mu_j$ for $2m+1 \leq j \leq 2m+p$.  As an alternative formulation, we  know (from \cite[Cor. 2.24]{Elaydi95}) that the recursive formula for $\mu_j$ has a unique solution of the form 
$$\mu_j = \mu(x_j^f) = \sum_{i=1}^k q_i(j) \lambda_i^j \quad \mbox{for} \quad j \ge 2m+1,$$
where the $\lambda_i$ correspond to writing $f = \prod_i ( t - \lambda_i)^{n_i}$ and the polynomials $q_i$ are determined by $\mu_{2m+1}, \ldots, \mu_{2m+p}$. 

Now define a homomorphism $\ddot{\mu} : \Vir^f \rightarrow \cc$ by 
\begin{align*}
\ddot{\mu}_j:=\ddot{\mu}(x_j^f) &= \sum_{i=i}^r q_i(j) \lambda_i^j, \quad j \in \z;\\
\ddot{\mu}(z) &=0.
\end{align*}
Note that $\ddot \mu_j = \mu_j $ for $j \ge 2m+1$, but this is not generally true for $m \le j \le 2m$.  
Define another homomorphism $\hat{\mu} : \B_m \rightarrow \cc$ by 
$$\hat \mu (z) = \mu(z) \qquad \mbox{and} \qquad \hat \mu (e_j) = \hat \mu_j,$$
where $\hat \mu_j = 0$ for $j \ge 2m+1$; and for $m \leq j \leq 2m$, $ \hat{\mu}_j$ is determined by the system of linear equations
\begin{equation} \label{eq:muhat}
\sum_{i=0}^p a_i  \hat{\mu}_{j+i} = \mu_j-\ddot{\mu}_j, \quad m \leq j \leq 2m.
\end{equation}
Written in matrix form, this system is upper-triangular with $a_0\neq 0$ on the diagonal. Therefore, there is a unique solution. 
Also, it follows from the definition of $\hat \mu$ that $\hat \mu(x_j^f) = \mu(x_j^f) - \ddot \mu (x_j^f)$.  Therefore, $\mu=\ddot \mu + \hat \mu$
as homomorphisms from $\B_m^f=\Vir^f \cap \B_m$ to $\cc$.  Using this decomposition, we will say that $\mu$ has {\it large degree} if $\ddot \mu$ has large degree.

\begin{cor} \label{cor:restrictedtensor}
Fix $m \in \z_{\ge -1}$. Let $\ddot \mu: \Vir^f \rightarrow \cc$ and $\hat \mu: \B_m \rightarrow \cc$ be homomorphisms and define $\mu=\ddot \mu+ \hat \mu : \B_m^f \rightarrow \cc$. Then,
$$
V^f_{\ddot \mu} \otimes {\rm Ind}_{\B_m}^\Vir (\cc_{\hat \mu}) \cong \otimes V^{f,m}_\mu.
$$
Conversely, for any homomorphism $\mu: \B_m^f \rightarrow \cc$, $V^{f,m}_\mu$ has such a tensor product decomposition; and $V^{f,m}_\mu$ is simple if and only if $\mu$ has large degree and ${\rm Ind}_{\B_m}^\Vir (\cc_{\hat \mu})$ is simple.
\end{cor}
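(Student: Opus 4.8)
The plan is to derive both halves of the statement from Proposition \ref{prop:generaltensor} and Theorem \ref{thm:tensorsimpleV}. Write $f=\prod_{i=1}^k(t-\lambda_i)^{n_i}$ with the $\lambda_i\in\cc^\times$ distinct. First I would apply Proposition \ref{prop:generaltensor} with $\g=\Vir$, $\mathfrak a=\Vir^f$, $\mathfrak b=\B_m$; then $\mathfrak a\cap\mathfrak b=\B_m^f$ as already noted, all three subalgebras are infinite-dimensional, and taking $\alpha=\ddot\mu$, $\beta=\hat\mu$ one has $\alpha+\beta=\mu$ on $\B_m^f$ by the decomposition $\mu=\ddot\mu+\hat\mu$ established just before the corollary. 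Hence $V_\alpha=V^f_{\ddot\mu}$, $V_\beta=\Ind_{\B_m}^{\Vir}\cc_{\hat\mu}$, and $V_{\alpha+\beta}=V^{f,m}_\mu$, so Proposition \ref{prop:generaltensor} gives $V^{f,m}_\mu\cong V^f_{\ddot\mu}\otimes\Ind_{\B_m}^{\Vir}\cc_{\hat\mu}$ as soon as the right-hand side is known to be cyclic on $v_{\ddot\mu}\otimes v_{\hat\mu}$.

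The cyclicity is the only step requiring real work, and it is where I expect the main (if modest) obstacle to lie: $V^f_{\ddot\mu}$ is not literally a tensor product of single-root modules, so Lemma \ref{lem:WVgen} does not apply on the nose. I would first invoke Lemma \ref{lem:polydeg2} to write $\ddot\mu=\sum_{i=1}^k\mu_i$ with $\mu_i:\Vir^{(t-\lambda_i)^{n_i}}\to\cc$, and then Corollary \ref{cor:genfsimple} to obtain an isomorphism $V^f_{\ddot\mu}\cong\bigotimes_{i=1}^k V^{(t-\lambda_i)^{n_i}}_{\mu_i}$ carrying $v_{\ddot\mu}$ to $v_1\otimes\cdots\otimes v_k$. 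Since $\hat\mu_j=0$ for $j>2m$, the module $\Ind_{\B_m}^{\Vir}\cc_{\hat\mu}$ is cyclic and $\Vir^+$-locally annihilated, so Lemma \ref{lem:WVgen} shows $\bigotimes_{i=1}^k V^{(t-\lambda_i)^{n_i}}_{\mu_i}\otimes\Ind_{\B_m}^{\Vir}\cc_{\hat\mu}$ is cyclic; transporting this along the isomorphism above settles the cyclicity, hence the desired isomorphism. For an arbitrary $\mu:\B_m^f\to\cc$ the pair $(\ddot\mu,\hat\mu)$ was constructed explicitly in the discussion preceding the corollary, so every $V^{f,m}_\mu$ admits such a decomposition.

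For the simplicity claim I would feed the isomorphism $V^{f,m}_\mu\cong\bigotimes_{i=1}^k V^{(t-\lambda_i)^{n_i}}_{\mu_i}\otimes\Ind_{\B_m}^{\Vir}\cc_{\hat\mu}$ into Theorem \ref{thm:tensorsimpleV}: the tensor product is simple if and only if $\Ind_{\B_m}^{\Vir}\cc_{\hat\mu}$ is simple and $\deg p_i\ge n_i-2$ for every $i$. By Lemma \ref{lem:polydeg2} the latter condition is exactly the assertion that $\ddot\mu$ has large degree, i.e.\ that $\mu$ has large degree, which is precisely the claim. The only remaining care is to confirm that $\Ind_{\B_m}^{\Vir}\cc_{\hat\mu}$ genuinely belongs to the class of cyclic $\Vir^+$-locally annihilated modules to which Lemma \ref{lem:WVgen} and Theorem \ref{thm:tensorsimpleV} apply, which it does, again because $\hat\mu_j$ vanishes for $j>2m$.
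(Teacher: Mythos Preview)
Your proof is correct and follows essentially the same route as the paper: the paper likewise derives the isomorphism from Lemma \ref{lem:WVgen} (for cyclicity) and Proposition \ref{prop:generaltensor}, and the simplicity from Theorem \ref{thm:tensorsimpleV} together with Corollary \ref{cor:genfsimple}. Your version is slightly more explicit in spelling out why Lemma \ref{lem:WVgen} applies, namely by passing through the identification $V^f_{\ddot\mu}\cong\bigotimes_i V^{(t-\lambda_i)^{n_i}}_{\mu_i}$ from Corollary \ref{cor:genfsimple}, a step the paper leaves implicit.
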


\begin{proof}
Note that the modules $\Ind_{\B_m}^{\Vir} \cc_{\hat \mu}$ are cyclic and $\Vir^+$-locally annihilated.  Then the isomorphism claims follow from Lemma \ref{lem:WVgen} and Proposition \ref{prop:generaltensor}.  The simplicity result follows from Theorem \ref{thm:tensorsimpleV} and Corollary \ref{cor:genfsimple}.
\end{proof}

\bigskip
The modules  ${\rm Ind}_{\B_m}^\Vir (\cc_{\hat \mu})$ have been well-studied. Following the literature, we introduce them here in three cases: $m=0$, $m=-1$, and $m>0$.  

Let $\theta: \B_0 \rightarrow \cc$ be a homomorphism. Define the associated {\it Verma module}
$$
M(\theta)= {\rm Ind}_{\B_0}^\Vir (\cc_\theta).
$$
  Since commutators must be sent to zero by $\theta$ and $[e_0, e_j]=je_j$, it follows that $\theta(e_j) = 0$  for $j \geq 1$. Writing $
\theta(e_0)=h$ and $\theta(z)=c$ (both of which may take on any value as $\theta$ varies), \cite{FF90} show that the module $M(\theta)$ is irreducible if and only if $h \neq h_{r,s}(c)$ for all $r, s \in \z_{\ge 1}$, where
\begin{equation} \label{eqn:KacDet}
h_{r,s} (c) =  \frac{1}{48} \left((13-c)(r^2+s^2) + \sqrt{(c-1)(c-25)}(r^2-s^2)-24rs-2+2c \right).
\end{equation}

Note that $h_{1,1}(c)=0$.  Therefore, $M(\theta)$ is reducible whenever $\theta(e_0)=0$. In this case, $M(\theta)$ has a submodule $N(\theta)$ so that the quotient can be realized as an induced module.
We construct the induced module as follows. Let $\xi: \B_{-1} \rightarrow \cc$ be a homomorphism, and define
$$
\overline M (\xi)= {\rm Ind}_{\B_{-1}}^\Vir (\cc_\xi).
$$ 
Again appealing to commutators, it follows that $\xi(e_j)=0$ for all $j \geq -1$. From \cite{FF90}, we have that $\overline M (\xi) \cong M(\theta) /N(\theta)$, where $\theta(e_j)=0$ for all $j \geq 0$ and $\theta(z)=\xi(z)$; and that $\overline{M}(\xi)$ is simple if and only if $\xi(z)\neq 1-6\frac{(p-q)^2}{pq}$ for any relatively prime integers $p, q \geq 2$.

Finally we define a Whittaker module as introduced in \cite{LGZ11}.  For $m \ge 1$, let $\psi: \B_{m} \rightarrow \cc$ be a nonzero homomorphism. In this case,  the commutator relations imply that $\psi(e_j)=0$ for $j \geq 2m+1$; and for $m \leq j \leq 2m$, $\psi(z), \psi(e_j) \in \cc$ may take on any value as $\psi$ varies.
Define 
$$L_{\psi} = {\rm Ind}_{\B_m}^{\Vir} (\cc_{\psi}).$$ 
\cite{LGZ11} show that $L_{\psi}$ is irreducible if and only if $\psi(e_{2m}) \neq 0$ or $\psi(e_{2m-1}) \neq 0$. 

We now show how the simplicity results for $M(\theta), \overline M(\xi), L_\psi$, along with the prior results of this paper, can be used to determine the simplicity of $V^{f,m}_\mu$ for any $f$, $m$, and $\mu$.  If we specialize to polynomials $f$ with no roots of multiplicity greater than $1$, then the modules $V^{f,m}_\mu$ are the modules studied in Section 5 of \cite{TZ16} and Section 6 of \cite{TZ13}.

\begin{cor}
Let $\mu: \B_{-1}^f \rightarrow \cc$ be a homomorphism. Then the induced module 
$$
V_\mu^{f, -1} \cong V_{\ddot{\mu}}^f \otimes {\overline M(\hat \mu)}
$$ 
is simple if and only if $\mu$ has large degree and $ \mu(z) \neq1-6\frac{(p-q)^2}{pq}$ for relatively prime integers $p, q \geq 2$.   
\end{cor}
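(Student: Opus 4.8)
The plan is to deduce this corollary directly from Corollary \ref{cor:restrictedtensor} together with the irreducibility criterion for $\overline M(\xi)$ quoted above from \cite{FF90}. First I would specialize Corollary \ref{cor:restrictedtensor} to the case $m=-1$. Given the homomorphism $\mu:\B_{-1}^f\to\cc$, form the decomposition $\mu=\ddot\mu+\hat\mu$ with $\ddot\mu:\Vir^f\to\cc$ and $\hat\mu:\B_{-1}\to\cc$ constructed in this section; since $\B_{-1}^f=\Vir^f\cap\B_{-1}$ contains $z$, all the values $\mu(z)$, $\ddot\mu(z)$, $\hat\mu(z)$ are defined. By definition $\Ind_{\B_{-1}}^{\Vir}\cc_{\hat\mu}=\overline M(\hat\mu)$, so Corollary \ref{cor:restrictedtensor} immediately yields the asserted isomorphism $V_\mu^{f,-1}\cong V_{\ddot\mu}^f\otimes\overline M(\hat\mu)$, and also tells us that $V_\mu^{f,-1}$ is simple if and only if $\mu$ has large degree and $\overline M(\hat\mu)$ is simple.

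It then remains to determine when $\overline M(\hat\mu)$ is simple. The result of \cite{FF90} recalled above states that $\overline M(\xi)$ is simple if and only if $\xi(z)\neq 1-6\frac{(p-q)^2}{pq}$ for all relatively prime integers $p,q\ge 2$. Applying this with $\xi=\hat\mu$, I need only observe that $\hat\mu(z)=\mu(z)$, which is immediate from the construction of $\hat\mu$ (the defining relations for $\hat\mu$ on the $e_j$ involve only the recurrence governed by $f$, while the central value is set directly by $\hat\mu(z)=\mu(z)$). Substituting $\hat\mu(z)=\mu(z)$ into the criterion and combining with the large-degree condition from Corollary \ref{cor:restrictedtensor} gives exactly the stated equivalence, completing the proof.

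There is essentially no genuine obstacle here; the statement is an assembly of Corollary \ref{cor:restrictedtensor} with a known fact. The only point requiring a moment's care is the notational matching: one must check that the homomorphism $\hat\mu$ produced by the decomposition genuinely is one of the homomorphisms $\xi:\B_{-1}\to\cc$ with $\xi(e_j)=0$ for all $j\ge -1$, so that $\overline M(\hat\mu)$ is literally a module of the form $\overline M(\xi)$, and that its central value coincides with $\mu(z)$. Both facts are built into the definitions of $\ddot\mu$ and $\hat\mu$ in this section, so no further argument is needed.
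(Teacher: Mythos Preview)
Your proposal is correct and follows exactly the paper's approach: the paper's proof is simply ``This follows from Corollary \ref{cor:restrictedtensor} and \cite{FF90},'' and your argument spells out precisely that deduction, including the identification $\hat\mu(z)=\mu(z)$ needed to transfer the Feigin--Fuchs criterion.
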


\begin{proof}
This follows from Corollary \ref{cor:restrictedtensor} and \cite{FF90}. 
\end{proof}

\begin{cor}
Let $\mu: \B_0^f \rightarrow \cc$ be a homomorphism and write $\mu_j=\mu(x_j^f)$, $\mu(z)=c$.
Then 
$$
V_\mu^{f,0} \cong V_{\ddot{\mu}}^f \otimes M(\hat \mu).
$$ 
is simple if and only if 
$a_0^{-2} \sum_{i=0}^pa_i\mu_i \neq h_{r,s}(c)$ for any $r, s \in \z_{>0}$; and  $\mu$ has large degree.  \end{cor}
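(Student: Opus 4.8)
The plan is to deduce this corollary directly from Corollary \ref{cor:restrictedtensor}, the simplicity criterion for Verma modules $M(\theta)$ recalled from \cite{FF90}, and the description of the decomposition $\mu = \ddot\mu + \hat\mu$. First I would invoke Corollary \ref{cor:restrictedtensor} with $m=0$: it gives the isomorphism $V_\mu^{f,0} \cong V_{\ddot\mu}^f \otimes \mathrm{Ind}_{\B_0}^{\Vir}(\cc_{\hat\mu}) = V_{\ddot\mu}^f \otimes M(\hat\mu)$, and it reduces simplicity of $V_\mu^{f,0}$ to the two conditions: $\mu$ has large degree (equivalently $\ddot\mu$ has large degree), and $M(\hat\mu)$ is simple. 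So the only real content is translating ``$M(\hat\mu)$ is simple'' into the stated inequality $a_0^{-2}\sum_{i=0}^p a_i\mu_i \neq h_{r,s}(c)$.

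Next I would unwind what $\hat\mu$ is on $\B_0$. By the construction preceding Corollary \ref{cor:restrictedtensor} (with $m=0$), $\hat\mu$ is the homomorphism $\B_0\to\cc$ with $\hat\mu(z)=\mu(z)=c$ and $\hat\mu(e_j)=\hat\mu_j$, where $\hat\mu_j=0$ for $j\ge 1$ and $\hat\mu_0$ is determined by the single equation $\sum_{i=0}^p a_i\hat\mu_{i} = \mu_0 - \ddot\mu_0$ coming from \eqref{eq:muhat} (the case $m=0$ gives only the index $j=0$). Since $\hat\mu_i=0$ for $i\ge 1$, this reads $a_0\hat\mu_0 = \mu_0-\ddot\mu_0$, so $\hat\mu_0 = a_0^{-1}(\mu_0-\ddot\mu_0)$. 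The Verma-module parameter is then $h=\hat\mu(e_0)=\hat\mu_0$ and $c=\hat\mu(z)$. By \cite{FF90} (as recalled in the excerpt around \eqref{eqn:KacDet}), $M(\hat\mu)$ is simple iff $h\neq h_{r,s}(c)$ for all $r,s\in\z_{\ge 1}$, i.e. iff $a_0^{-1}(\mu_0-\ddot\mu_0) \neq h_{r,s}(c)$ for all $r,s$.

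It remains to show $a_0^{-1}(\mu_0-\ddot\mu_0) = a_0^{-2}\sum_{i=0}^p a_i\mu_i$. Here I would use that $\ddot\mu_j = \mu_j$ for all $j\ge 2m+1 = 1$, together with the fact that $\ddot\mu$ is a homomorphism $\Vir^f\to\cc$ and hence satisfies the recurrence $\sum_{i=0}^p a_i\ddot\mu_{m+i}=0$ for every $m\in\z$ (this is the content of \eqref{eqn:recurrence}/Lemma \ref{lem:mupoly} applied to $\ddot\mu$). Taking $m=0$ gives $\sum_{i=0}^p a_i\ddot\mu_i = 0$, i.e. $a_0\ddot\mu_0 = -\sum_{i=1}^p a_i\ddot\mu_i = -\sum_{i=1}^p a_i\mu_i$. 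Therefore
\begin{align*}
a_0^{-1}(\mu_0-\ddot\mu_0) &= a_0^{-1}\mu_0 - a_0^{-1}\ddot\mu_0 = a_0^{-1}\mu_0 + a_0^{-2}\sum_{i=1}^p a_i\mu_i = a_0^{-2}\Bigl(a_0\mu_0 + \sum_{i=1}^p a_i\mu_i\Bigr) = a_0^{-2}\sum_{i=0}^p a_i\mu_i.
\end{align*}
Combining the three pieces: $V_\mu^{f,0}\cong V_{\ddot\mu}^f\otimes M(\hat\mu)$ is simple iff $\mu$ has large degree and $a_0^{-2}\sum_{i=0}^p a_i\mu_i\neq h_{r,s}(c)$ for all $r,s\in\z_{>0}$, which is the claim. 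The main (and essentially only) obstacle is bookkeeping: being careful that in the $m=0$ case the defining system \eqref{eq:muhat} collapses to one equation and correctly identifying $h=\hat\mu_0$, and then verifying the identity above via the recurrence satisfied by $\ddot\mu$; there is no deep difficulty, just the need to keep the normalizations of $\ddot\mu$, $\hat\mu$, and the $h_{r,s}(c)$ formula straight.
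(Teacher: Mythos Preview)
Your argument is correct and follows essentially the same route as the paper: invoke Corollary~\ref{cor:restrictedtensor} at $m=0$, read off $\hat\mu_0=a_0^{-1}(\mu_0-\ddot\mu_0)$ from the single equation \eqref{eq:muhat}, and then use the recurrence $\sum_i a_i\ddot\mu_i=0$ together with $\ddot\mu_i=\mu_i$ for $i\ge 1$ to rewrite $\hat\mu_0$ as $a_0^{-2}\sum_{i=0}^p a_i\mu_i$, finishing with the Feigin--Fuchs criterion. There is nothing to add.
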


\begin{proof}
From \cite{FF90}, the Verma module $M(\hat \mu)$ is irreducible if and only if $\hat \mu_0 \neq h_{r,s}^c$ for any $r, s \in \z_{>0}$. On the other hand, for $m=0$, (\ref{eq:muhat}) becomes a single equation with solution $\hat{\mu}_0 = a_0^{-1} (\mu_0-\ddot{\mu}_0)$. Since $-a_0 \ddot \mu_0 = \sum_{i=1}^p a_i \ddot{\mu}_i$ and $\ddot \mu_i = \mu_i$ for $i>0$, it follows that $\hat{\mu}_0=a_0^{-2} \sum_{i=0}^pa_i\mu_i$.    

The simplicity result now follows from Corollary \ref{cor:restrictedtensor}. 

\end{proof}

\begin{cor}
Let $m \in \z_{\geq 1}$ and let $\mu: \B_m^f \rightarrow \cc$ be a homomorphism. Then 
$$
V_\mu^{f,m} \cong V_{\ddot{\mu}}^f \otimes L_{\hat{\mu}}.
$$
is simple if and only if 
$$\sum_{i=0}^p a_i \mu_{2m+i} \neq 0 \quad \mbox{or} \quad a_0 \sum_{i=0}^p a_i \mu_{2m+i-1} -2a_1 \sum_{i=0}^p a_i \mu_{2m+i} \neq 0;$$
 and $\mu$ has large degree.
\end{cor}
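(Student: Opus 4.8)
The plan is to push everything through Corollary \ref{cor:restrictedtensor} and the known irreducibility criterion for Whittaker modules, so that the only real content is an explicit computation of two coordinates of the homomorphism $\hat\mu$. By Corollary \ref{cor:restrictedtensor}, we have $V_\mu^{f,m}\cong V_{\ddot\mu}^f\otimes L_{\hat\mu}$, where $L_{\hat\mu}=\Ind_{\B_m}^{\Vir}(\cc_{\hat\mu})$, and this module is simple exactly when $\mu$ has large degree and $L_{\hat\mu}$ is simple. By \cite{LGZ11}, $L_{\hat\mu}$ is simple if and only if $\hat\mu(e_{2m})\neq 0$ or $\hat\mu(e_{2m-1})\neq 0$. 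So it remains only to translate these two inequalities into the stated conditions on $\mu_j=\mu(x_j^f)$.

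Writing $\hat\mu_j=\hat\mu(e_j)$ and $\ddot\mu_j=\ddot\mu(x_j^f)$, I would use two ingredients. First, by the definition of $\hat\mu$ (see \eqref{eq:muhat}), for $m\le j\le 2m$ we have $\sum_{i=0}^p a_i\hat\mu_{j+i}=\mu_j-\ddot\mu_j$, together with $\hat\mu_j=0$ for $j\ge 2m+1$. Second, since $\ddot\mu$ is a homomorphism $\Vir^f\to\cc$, the recurrence \eqref{eqn:recurrence} from Lemma \ref{lem:mupoly} gives $\sum_{i=0}^p a_i\ddot\mu_{j+i}=0$ for every $j\in\z$, and moreover $\ddot\mu_j=\mu_j$ for $j\ge 2m+1$. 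Evaluating the first relation at $j=2m$ (where all terms $\hat\mu_{2m+i}$ with $i\ge 1$ vanish) gives $a_0\hat\mu_{2m}=\mu_{2m}-\ddot\mu_{2m}$, and evaluating the recurrence for $\ddot\mu$ at $j=2m$ (where $\ddot\mu_{2m+i}=\mu_{2m+i}$ for $i\ge 1$) gives $a_0\ddot\mu_{2m}=-\sum_{i=1}^p a_i\mu_{2m+i}$; combining these yields $a_0^2\hat\mu_{2m}=\sum_{i=0}^p a_i\mu_{2m+i}$. Since $a_0\neq 0$, this identifies the first condition: $\hat\mu(e_{2m})\neq 0$ if and only if $\sum_{i=0}^p a_i\mu_{2m+i}\neq 0$.

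For the second coordinate, I would evaluate \eqref{eq:muhat} at $j=2m-1$, where only the terms with $i\in\{0,1\}$ survive, obtaining $a_0\hat\mu_{2m-1}+a_1\hat\mu_{2m}=\mu_{2m-1}-\ddot\mu_{2m-1}$; and the recurrence for $\ddot\mu$ at $j=2m-1$ gives $a_0\ddot\mu_{2m-1}=-a_1\ddot\mu_{2m}-\sum_{i=2}^p a_i\mu_{2m+i-1}$ (using $\ddot\mu_{2m+i-1}=\mu_{2m+i-1}$ for $i\ge 2$). Substituting the already-computed values of $\ddot\mu_{2m}$ and $\hat\mu_{2m}$, eliminating $\ddot\mu_{2m-1}$, and collecting terms, I expect to arrive at $a_0^3\hat\mu_{2m-1}=a_0\sum_{i=0}^p a_i\mu_{2m+i-1}-2a_1\sum_{i=0}^p a_i\mu_{2m+i}$, so that $\hat\mu(e_{2m-1})\neq 0$ if and only if $a_0\sum_{i=0}^p a_i\mu_{2m+i-1}-2a_1\sum_{i=0}^p a_i\mu_{2m+i}\neq 0$. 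Putting the two equivalences together with the ``large degree'' condition supplied by Corollary \ref{cor:restrictedtensor} gives exactly the stated criterion. The one point requiring care --- and the place I expect the argument to feel fiddly rather than hard --- is the index bookkeeping in the last step: keeping track of which indices $2m+i-1$ lie in the range $\ge 2m+1$ (so that $\hat\mu$ vanishes and $\ddot\mu$ agrees with $\mu$ there), and verifying that the small-$p$ cases $p=0,1$, in which some of the displayed sums are empty, are still covered by the same formulas.
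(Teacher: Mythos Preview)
Your proposal is correct and follows essentially the same route as the paper: invoke Corollary~\ref{cor:restrictedtensor} and the simplicity criterion for $L_{\hat\mu}$ from \cite{LGZ11}, then compute $\hat\mu_{2m}$ and $\hat\mu_{2m-1}$ from \eqref{eq:muhat} together with the recurrence $\sum_i a_i\ddot\mu_{j+i}=0$ and the fact that $\ddot\mu_j=\mu_j$ for $j\ge 2m+1$. The only difference is cosmetic: the paper rearranges line~3 of its computation as $(a_0\mu_{2m-1}+a_1\mu_{2m})-(a_0\ddot\mu_{2m-1}+a_1\ddot\mu_{2m})-2a_1(\mu_{2m}-\ddot\mu_{2m})$ before substituting, whereas you substitute directly; both give $a_0^3\hat\mu_{2m-1}=a_0\sum_i a_i\mu_{2m+i-1}-2a_1\sum_i a_i\mu_{2m+i}$.
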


\begin{proof}
From \cite{LGZ11}, $L_{\hat{\mu}}$ is simple if and only if $\hat{\mu}_{2m-1} \neq 0$ or $\hat{\mu}_{2m} \neq 0$. From (\ref{eq:muhat}), we have $\hat{\mu}_{2m}= a_0^{-1} (\mu_{2m} - \ddot \mu_{2m})=a_0^{-2} \sum_{i=0}^pa_i\mu_{2m+i}$.  Similarly, 
\begin{align*}
\hat \mu_{2m-1}&=a_0^{-1} (\mu_{2m-1} -\ddot{\mu}_{2m-1} -a_1 \hat \mu_{2m})\\
&=a_0^{-2} (a_0 \mu_{2m-1} - a_1 \mu_{2m} - a_0 \ddot \mu_{2m-1} +a_1 \ddot \mu_{2m})\\
&=a_0^{-2}((a_0\mu_{2m-1} +a_1 \mu_{2m})-(a_0\ddot \mu_{2m-1}+a_1\ddot \mu_{2m}) -2a_1(\mu_{2m}-\ddot \mu_{2m}))\\
&= a_0^{-2} (\sum_{i=0}^p a_i \mu_{2m+i-1} -2a_1 a_0^{-1}\sum_{i=0}^p a_i \mu_{2m+i})
\end{align*}
The result now follows from Corollary \ref{cor:restrictedtensor}. 
\end{proof}


\begin{thebibliography}{99}

\bibitem[B96]{B96} A. F. ~Beardon. {\it Sums of Powers of Integers}. Am. Math. Monthly. 103(3) (1996), 201-213.

\bibitem[CGZ13]{CGZ13}H. ~Chen, X. ~Guo, and Z. ~Zhao. {\it Tensor product weight modules over the
Virasoro algebra}, J. Lond. Math. Soc. (2) 88 (2013), 829Ð844.

\bibitem[CM01]{CM01} C. ~Conley, and C. ~Martin, {\it A family of irreducible representations of Witt Lie
algebra with infinite-dimensional weight spaces}, Compos. Math. 128 (2001), 152Ð175.

\bibitem[E]{Elaydi95}S. ~Elaydi. {\it An introduction to difference equations}. Undergraduate Texts in Mathematics . Springer, New York, 1995. 

\bibitem[FF90]{FF90} B.~Fe{\u\i}gin and D.~Fuks, Representations of the Virasoro algebra, in \emph{Representation of Lie groups and related topics}, Adv. Stud. Contemp. Math., 7, Gordon and Breach, New York, (1990), 465--554.

\bibitem[FJK12]{FJK12} E. ~Felinska, Z. ~Jaskolski, and M. Kosztolowicz. {\it Whittaker pairs for the Virasoro
algebra and the Gaiotto-Bonelli-Maruyoshi-Tanzini states}. J. Math. Phys. 53 (2012), 033504.

\bibitem[GLZ13]{GLZ13} X. Guo, R. L\"u and K. ~Zhao, {\it Fraction representations and highest-weight-like representations for the Virasoro algebra}, J.~Algebra., 387 (2013), 68-86.

\bibitem[K80]{K80}V.G. ~Kac. {\it Highest weight representations of infinite-dimensional Lie algebras},
Proceedings of the International Congress of Mathematicians, (1980), pp. 299-304.

\bibitem[K90]{K90}V.G. ~Kac. {\it Infinite-dimensional Lie algebras} (3rd Ed). Cambridge University Press, Cambridge, (1990).

\bibitem[LGZ11]{LGZ11}R. ~L\"u, X. ~Guo, K. ~Zhao. {\it Irreducible modules over the Virasoro algebra}.
Doc. Math. 16 (2011), 709Ð721.

\bibitem[LL04]{LL04} J.~Lepowsky and H.~Li. {\it Introduction to vertex operator algebras and their representations}, volume 227 of Progress in Mathematics. Birkh$\ddot{{\rm a}}$user Boston Inc., Boston, (2004).

\bibitem[LLZ15]{LLZ15} G. ~Liu, R. ~L\"u, and K. ~Zhao, {\it A class of simple weight Virasoro modules}, J. Algebra
424 (2015), 506Ð521.

\bibitem[LZ14]{LZ14} R. L\"u and K. ~Zhao, {\it Irreducible Virasoro modules from irreducible Weyl modules}, J.~Algebra, 414 (2014), 271--287.

\bibitem[M92]{M92}  O. ~Mathieu. {\it Classification of Harish-Chandra modules over the Virasoro
Lie algebra}. Invent. Math. 107 (1992), no. 2, 225Ð234.

\bibitem[MP95]{MP95} R.~Moody and A.~Pianzola. {\it Lie algebras with triangular decompositions}. Canadian Mathematical Society Series of Monographs and Advanced Texts. John Wiley \& Sons Inc., New York, 1995. 

\bibitem[MW14]{MW14}V. ~Mazorchuk, and E. ~Wiesner, {\it Simple Virasoro modules induced from codimension
one subalgebras of the positive part}, Proc. Amer. Math. Soc. 142 (2014), 3695Ð3703.

\bibitem[MZ07]{MZ07} V. ~Mazorchuk and K. ~Zhao. {\it Classification of simple weight Virasoro modules
with a finite-dimensional weight space}, J. Algebra 307 (2007), 209Ð214.

\bibitem[MZ14]{MZ14}V. ~Mazorchuk and K. ~Zhao. {\it Simple Virasoro modules which are locally finite
over a positive part}, Selecta Math. 20 (2014), 839Ð854.

\bibitem[OW08]{OW08} M.~Ondrus and E.~Wiesner, {\it Whittaker modules for the Virasoro algebra}, J. Algebra Appl., {\bf 8} (2009), no. 3, 363 - 377.

\bibitem[TZ13]{TZ13} H.~Tan, K.~Zhao, {\it Irreducible Virasoro modules from tensor products (II)},  J.~Algebra, {\bf 394} (2013), 357Ð373.

\bibitem[TZ16]{TZ16} H.~Tan, K.~Zhao, {\it Irreducible Virasoro modules from tensor products}, Ark. Mat., {\bf 54} (2016), 181-200.

\end{thebibliography}
\end{document}